\title{A local epsilon version of Reed's Conjecture}
\author{Tom Kelly\thanks{Email: \texttt{t9kelly@uwaterloo.ca}} }
\author{Luke Postle\thanks{Partially supported by NSERC under Discovery Grant No.\ 2019-04304, the Ontario Early Researcher Awards program and the Canada Research Chairs program. Email: \texttt{lpostle@uwaterloo.ca}}}
\affil{Department of Combinatorics and Optimization\\
  University of Waterloo}
\date{June 24, 2021}
\begin{document}

\maketitle

\begin{abstract}
  In 1998, Reed conjectured that every graph $G$ satisfies $\chi(G) \leq \lceil \frac{1}{2}(\Delta(G) + 1 + \omega(G))\rceil$, where $\chi(G)$ is the chromatic number of $G$, $\Delta(G)$ is the maximum degree of $G$, and $\omega(G)$ is the clique number of $G$.  As evidence for his conjecture, he proved an ``epsilon version'' of it, i.e.\ that there exists some $\varepsilon > 0$ such that $\chi(G) \leq (1 - \varepsilon)(\Delta(G) + 1) + \varepsilon\omega(G)$.  It is natural to ask if Reed's conjecture or an epsilon version of it is true for the list-chromatic number.  In this paper we consider a ``local version'' of the list-coloring version of Reed's conjecture.  Namely, we conjecture that if $G$ is a graph with list-assignment $L$ such that for each vertex $v$ of $G$, $|L(v)| \geq \lceil \frac{1}{2}(d(v) + 1 + \omega(v))\rceil$, where $d(v)$ is the degree of $v$ and $\omega(v)$ is the size of the largest clique containing $v$, then $G$ is $L$-colorable.  Our main result is that an ``epsilon version'' of this conjecture is true, under some mild assumptions.

  Using this result, we also prove a significantly improved lower bound on the density of $k$-critical graphs with clique number less than $k/2$, as follows.  For every $\alpha > 0$, if $\varepsilon \leq \frac{\alpha^2}{1350}$, then if $G$ is an $L$-critical graph for some $k$-list-assignment $L$ such that $\omega(G) < (\frac{1}{2} - \alpha)k$ and $k$ is sufficiently large, then $G$ has average degree at least $(1 + \varepsilon)k$.  This implies that for every $\alpha > 0$, there exists $\varepsilon > 0$ such that if $G$ is a graph with $\omega(G)\leq (\frac{1}{2} - \alpha)\mathrm{mad}(G)$, where $\mathrm{mad}(G)$ is the maximum average degree of $G$, then $\chi_\ell(G) \leq \left\lceil (1 - \varepsilon)(\mad(G) + 1) + \varepsilon \omega(G)\right\rceil$.  It also yields an improvement on the best known upper bound for the chromatic number of $K_t$-minor free graphs for large $t$, by a factor of .99982.
\end{abstract}

\section{Introduction}

Let $G$ be a graph, and let $L = (L(v) : v\in V(G))$ be a collection of lists which we call {\em available colors}.  If each set $L(v)$ is non-empty, then we say that $L$ is a {\em list-assignment} for $G$.  If $k$ is an integer and $|L(v)|\geq k$ for every $v\in V(G)$, then we say that $L$ is a {\em $k$-list-assignment} for $G$.  An {\em $L$-coloring} of $G$ is a mapping $\phi$ with domain $V(G)$ such that $\phi(v)\in L(v)$ for every $v\in V(G)$ and $\phi(u)\neq\phi(v)$ for every pair of adjacent vertices $u,v\in V(G)$.  If $G$ has an $L$-coloring, then we say $G$ is \textit{$L$-colorable}.  We say that $G$ is {\em $k$-list-colorable}, or {\em $k$-choosable}, if $G$ has an $L$-coloring for every $k$-list-assignment $L$.  If $L(v) = \{1,\dots, k\}$ for every $v\in V(G)$, then we call an $L$-coloring of $G$ a {\em $k$-coloring}, and we say $G$ is {\em $k$-colorable} if $G$ has a $k$-coloring.  The \textit{chromatic number} of $G$, denoted $\chi(G)$, is the smallest $k$ such that $G$ is $k$-colorable.  The \textit{list-chromatic number} of $G$, denoted $\chi_\ell(G)$, is the smallest $k$ such that $G$ is $k$-list-colorable.

It is easy to see that for every graph $G$,
\begin{equation}
  \label{eq:intro-trivial-bounds}
  \omega(G) \leq \chi(G) \leq \chi_\ell(G) \leq \Delta(G) + 1,
\end{equation}
where $\omega(G)$ denotes the size of a largest clique in $G$ and $\Delta(G)$ denotes the maximum degree of a vertex in $G$.  If $G$ is a clique or an odd cycle then the upper bound in \eqref{eq:intro-trivial-bounds} is tight for both the chromatic number and list-chromatic number.  A classical theorem of Brooks \cite{B41} says that for the chromatic number, this is essentially the only case in which it is tight.
\begin{thm}[Brooks' Theorem \cite{B41}]
  If $G$ is a connected graph that is not a clique or odd cycle, then $\chi(G) \leq \Delta(G)$.
\end{thm}

In 1998, Reed \cite{R98} famously conjectured that, up to rounding, the chromatic number of a graph is at most the average of its clique number and maximum degree plus one.
\begin{conj}[Reed's Conjecture \cite{R98}]
  For every graph $G$,
  \begin{equation*}
    \chi(G) \leq \left\lceil\frac{1}{2}(\Delta(G) + 1 + \omega(G))\right\rceil.
  \end{equation*}
\end{conj}

As evidence for his conjecture, Reed~\cite{R98} proved that the chromatic nuumber of a graph is at most a weighted average of its clique number and maximum degree plus one.  We call this an ``epsilon version'' of Reed's Conjecture.
\begin{thm}[Reed \cite{R98}]\label{epsilon reeds}
  There exists $\varepsilon > 0$ such that for every graph $G$,
  \begin{equation*}
    \chi(G) \leq (1 - \varepsilon)(\Delta(G) + 1) + \varepsilon\omega(G).
  \end{equation*}
\end{thm}

Reed~\cite{R98} originally proved that Theorem~\ref{epsilon reeds} holds for graphs of sufficiently large maximum degree for $\varepsilon = 1.4\cdot 10^{-8}$.  In 2016, Bonamy, Perrett, and Postle~\cite{BPP16} improved this to $\varepsilon = \frac{1}{26}$.  Recently, Delcourt and Postle~\cite{DP17} (see~\cite{DP-eurocomb} for an extended abstract) improved this further to $\varepsilon = \frac{1}{13}$.  The blowup of a 5-cycle demonstrates that Theorem~\ref{epsilon reeds} does not hold for $\varepsilon \geq \frac{1}{2}$ and that the rounding in Reed's Conjecture is necessary.

It is natural to wonder if Brooks' Theorem or even Reed's Conjecture is true for the list-chromatic number.  We conjecture that for Reed's Conjecture this is the case.
\begin{conj}\label{list reed's conj}
  For every graph $G$,
  \begin{equation*}
    \chi_\ell(G) \leq \left\lceil\frac{1}{2}(\Delta(G) + 1 + \omega(G))\right\rceil.
  \end{equation*}
\end{conj}

The result of Delcourt and Postle~\cite{DP17} is actually proved for the list-chromatic number, implying an ``epsilon version'' of Conjecture~\ref{list reed's conj}.

In 1979, in one of the papers that introduced list-coloring, Erd\H os, Rubin, and Taylor \cite{ERT79} proved the following classical theorem.
\begin{thm}[Erd\H os, Rubin, and Taylor \cite{ERT79}]\label{local brooks}
  Let $G$ be a connected graph with list-assignment $L$.  If for every $v\in V(G)$, $|L(v)| \geq d(v)$, then $G$ is $L$-colorable, unless every block of $G$ is a clique or an odd cycle and for every $v\in V(G)$, $|L(v)| = d(v)$.
\end{thm}

Note that Theorem \ref{local brooks} implies that Brooks' Theorem is true for the list-chromatic number.  We consider Theorem \ref{local brooks} to be the archetype of what we call a ``local version.''  The main focus of this paper is the following conjecture, which we consider to be the natural ``local version'' of Reed's Conjecture and Conjecture \ref{list reed's conj}.

\begin{conj}[Local Version of Reed's Conjecture]\label{local reeds}
  If $G$ is a graph with list-assignment $L$ such that for every $v\in V(G)$,
  \begin{equation*}
    |L(v)| \geq \left\lceil \frac{1}{2}(d(v) + 1 + \omega(v))\right\rceil,
  \end{equation*}
  where $\omega(v)$ is the size of the largest clique containing $v$,
  then $G$ is $L$-colorable.
\end{conj}

Note that if true, Conjecture~\ref{local reeds} implies Reed's Conjecture and Conjecture~\ref{list reed's conj}.  As evidence for Conjecture~\ref{local reeds}, we prove an ``epsilon version'' of it, under certain mild assumptions.  The following is the main result of this paper.
\begin{thm}\label{main thm}
  Let $\varepsilon = \frac{1}{330}$.  If $G$ is a graph of sufficiently large maximum degree and $L$ is a list-assignment for $G$ such that for all $v\in V(G)$, $|L(v)| \geq \omega(v) + \log^{\logexp}(\Delta(G))$ 
  and 
  \begin{equation*}
    |L(v)| \geq (1 - \varepsilon)(d(v) + 1) + \varepsilon \omega(v),
  \end{equation*} 
  then $G$ is $L$-colorable.
\end{thm}

We prove Theorem~\ref{main thm} by proving structural properties of a ``minimum counterexample'' that enable us to then find an $L$-coloring using the probabilistic method.  The assumption in Theorem~\ref{main thm} that for each vertex $v$, $|L(v)| \geq \omega(v) + \log^{10}(\Delta(G))$, implies both that no vertex has a neighborhood that is ``too close'' to being a clique and that the minimum number of available colors for a vertex is sufficiently large.  As we will see, this in turn implies that a minimum counterexample to Theorem~\ref{main thm} has sufficiently large minimum degree.  It would be interesting to prove Theorem~\ref{main thm} with the hypothesis that $|L(v)| \geq \omega(v) + \log^{10}(\Delta(G))$ for each vertex $v$ replaced with the weaker assumption that the minimum degree of $G$ is at least $\log^{10}(\Delta(G))$. As we discuss in Section~\ref{overview section}, this may be possible to prove with an extension of our methods, at the expense of a worse value of $\varepsilon$.  However, since we use the probabilistic method, we do not believe our techniques could be extended to to prove Conjecture~\ref{local reeds} in full.

In Section~\ref{overview section}, we provide an overview of the proof of Theorem~\ref{main thm}, and Sections~\ref{metatheorem section}-\ref{concentration section} are devoted to its proof.  In order to prove Theorem~\ref{main thm}, we needed to develop a new version of Talagrand's ``Concentration Inequality,'' Theorem~\ref{exceptional talagrand's}, which we prove in Appendix~\ref{tala proof section}.  Our proof of Theorem~\ref{exceptional talagrand's} corrects a flaw in a version of Talagrand's Inequality in the book of Molloy and Reed~\cite[Talagrand's Inequality II]{MR00} (see Remark~\ref{talagrand mistake remark} in Section~\ref{concentration section}).

We now discuss some applications of our result.
\subsection{King's Conjecture}
In 2009, King \cite{K09} conjectured the following strengthening of Reed's Conjecture.
\begin{conj}[King \cite{K09}]\label{king's conj}
  For every graph $G$,
  \begin{equation*}
    \chi(G) \leq \max_{v\in V(G)}\left\lceil\frac{1}{2}(d(v) + 1 + \omega(v))\right\rceil.
  \end{equation*}
\end{conj}

King's idea behind Conjecture \ref{king's conj} was that a strengthened form of Reed's Conjecture may be easier to prove using induction.  For certain classes of graphs, this idea has been useful.  Using this and the structure theory of claw-free graphs of Chudnovsky and Seymour, King \cite{K09} proved that Reed's Conjecture is true for claw-free graphs.  The proof also appears in \cite{KR15}.  In 2013, Chudnovsky et al.\ \cite{CKPS13} proved that King's Conjecture holds for quasi-line graphs, and in 2015 King and Reed \cite{KR15} proved it for claw-free graphs with a 3-colorable complement.

Note that Conjecture \ref{local reeds}, if true, implies Conjecture \ref{king's conj}, even for list-coloring.  The first application of our main result is that it implies that an ``epsilon version'' of Conjecture \ref{king's conj} is true, assuming $G$ does not contain a clique of size within a factor of $\frac{329}{330} - o(1)$ of the maximum degree of $G$.  The following corollary follows easily from Theorem~\ref{main thm}.

\begin{cor}
  Let $\varepsilon \leq \frac{1}{330}$.  If $G$ is a graph of sufficiently large maximum degree such that $\omega(G) \leq (1 - \varepsilon)\Delta(G) - \log^{10}(\Delta(G))$, then
  \begin{equation*}
    \chi_\ell(G) \leq \max_{v\in V(G)}(1 - \varepsilon)(d(v) + 1) + \varepsilon\omega(v).
  \end{equation*}
\end{cor}
 
\subsection{Critical Graphs}
Now we discuss an application of Theorem~\ref{main thm}  to critical graphs.  A graph $G$ is \textit{$k$-critical} if $G$ is not $(k-1)$-colorable but every proper induced subgraph of $G$ is, and if $L$ is a list-assignment for $G$, then $G$ is \textit{$L$-critical} if $G$ is not $L$-colorable but every proper induced subgraph of $G$ is.  A list-assignment $L$ is \textit{$k$-uniform} if for every vertex $v$, $|L(v)| = k$.  We denote the \textit{average degree} of a graph $G$ by $\ad(G)$.  The average degree of critical graphs has been extensively studied.  Note that a $k$-critical graph has no vertex of degree less than $k - 1$, so the average degree of a $k$-critical graph is trivially at least $k-1$.  Much work has been devoted to improving this bound.  In a breakthrough result from 2014, Kostochka and Yancey \cite{KY14} proved the following lower bound on the number of edges in $k$-critical graphs.  

\begin{thm}[Kostochka and Yancey \cite{KY14}]\label{kostochka yancey edge bound}
  If $k \geq 4$ and $G$ is $k$-critical, then
  \begin{equation*}
    |E(G)| \geq \left\lceil\frac{(k + 1)(k - 2)|V(G)| - k(k - 3)}{2(k - 1)}\right\rceil.
  \end{equation*}
\end{thm}

Theorem~\ref{kostochka yancey edge bound} implies the following asymptotic lower bound on the average degree of $k$-critical graphs.
\begin{cor}[Kostochka and Yancey \cite{KY14}]\label{kostochka yancey ad bound}
  Let $k\geq 4$, and let $G$ be a $k$-critical graph on $n$ vertices.  Then as $n$ approaches infinity,
  \begin{equation*}
    \ad(G) \geq k - \frac{2}{k-1} - o(1).
  \end{equation*}
\end{cor}

Theorem~\ref{kostochka yancey edge bound} is tight for every $k$ for an infinite family of graphs, as shown by Ore \cite{O67}.  Therefore the asymptotic bound in Corollary~\ref{kostochka yancey ad bound} can not be improved.  Kostochka and Yancey asked if their bound can be improved by excluding certain subgraphs, such as cliques, and if similar results hold for list-coloring.  This was considered earlier by Kostochka and Stiebitz \cite{KS00}.  

\begin{thm}[Kostochka and Stiebitz \cite{KS00}]
  For every fixed $r$, if $G$ is $L$-critical for some $(k-1)$-uniform list-assignment $L$ and $\omega(G) \leq r$, then
  \begin{equation*}
    \ad(G) \geq 2k - o(k).
  \end{equation*}
\end{thm}

It is natural to not only consider graphs with bounded clique number but also graphs with clique number bounded by a function of $k$.  Theorem~\ref{main thm} implies that the bound in Corollary~\ref{kostochka yancey ad bound} can be improved for large $k$ if $G$ is an $L$-critical graph for some $k$-list-assignment $L$ and $G$ has no clique of size at least $k/2$, as follows.

\begin{thm}\label{critical thm}
  For every $\alpha > 0$, if $\varepsilon \leq \frac{\alpha^2}{1350}$ then the following holds.  If $G$ is an $L$-critical graph for some $k$-list-assignment $L$ such that $\omega(G) < (\frac{1}{2} - \alpha)k$ and $k$ is sufficiently large, then
  \begin{equation*}
    \ad(G) > (1 + \varepsilon)k.
  \end{equation*}
\end{thm}

\subsection{Maximum Average Degree}
The bound on the chromatic number supplied by Reed's Conjecture can be viewed as the average of the lower and upper bounds provided in \eqref{eq:intro-trivial-bounds}, as previously mentioned.  However, the upper bound in \eqref{eq:intro-trivial-bounds} can easily be improved by replacing $\Delta(G)$ with $\lfloor \mad(G)\rfloor$, where $\mad(G) = \max_{H\subseteq G}\ad(H)$, the \textit{maximum average degree} of $G$.  In the spirit of Reed's Conjecture, we conjecture the following which, if true, implies Reed's Conjecture.

\begin{conj}\label{mad conjecture}
  For every graph $G$,
  \begin{equation*}
    \chi_\ell(G) \leq \left\lceil\frac{1}{2}\left(\mad(G) + 1 + \omega(G)\right)\right\rceil.
  \end{equation*}
\end{conj}

Note that Conjecture~\ref{mad conjecture}, if true, would be tight for $K_{2,4}$, since $\chi_\ell(K_{2,4}) = 3$.  More generally, $\chi_\ell(K_{t, t^t}) = t + 1$ and $\mad(K_{t, t^t}) = 2t^t/(1 + t^{t-1}) \leq 2t$, so the graphs $K_{t, t^t}$ provide an infinite family for which the difference of the right and left side of the inequality in Conjecture~\ref{mad conjecture} is at most one.

Another application of Theorem~\ref{main thm} is an ``epsilon version'' of Conjecture~\ref{mad conjecture} for graphs with clique number less than half their maximum average degree.
\begin{thm}\label{epsilon mad}
  For every $\alpha > 0$, there exists $\varepsilon > 0$ such that the following holds.  For every graph $G$ such that $\omega(G) \leq (\frac{1}{2} - \alpha)\mad(G)$,
  \begin{equation*}
    \chi_\ell(G) \leq \left\lceil(1 - \varepsilon)(\mad(G) + 1) + \varepsilon\omega(G)\right\rceil.
  \end{equation*}
\end{thm}
Theorem~\ref{epsilon mad} follows easily from Theorem~\ref{critical thm}.  We include the proof in Section~\ref{critical section}.

\subsection{$K_t$-minor free graphs}
We conclude this section with an application of Theorem~\ref{critical thm} to Hadwiger's conjecture, which is considered one of the most important open problems in graph theory.  Hadwiger~\cite{H43} conjectured in 1943 that if a graph has no $K_{t+1}$-minor, then it has chromatic number at most $t$.  The best known upper bound on the chromatic number of $K_t$-minor free graphs to date uses the fact that the chromatic number of a graph is at most its maximum average degree, combined with the following theorem of Thomason~\cite{T01} providing a tight upper bound on the average degree of $K_t$-minor free graphs.\footnote{Following acceptance of this paper for publication in JCTB, further improvements were made by Norin and Song~\cite{NS19} and Postle~\cite{P19}.}
\begin{thm}[Thomason \cite{T01}]\label{clique minor average degree bound}
  If $G$ is a graph with no $K_t$-minor, then
  \begin{equation*}
    \ad(G) \leq (\gamma + o(1))t\sqrt{\log t},
  \end{equation*}
  where $\gamma = 0.63817...$ is an explicit constant.
\end{thm}

By combining Theorem~\ref{critical thm} with Theorem~\ref{clique minor average degree bound}, we can improve the best known upper bound on the chromatic number of $K_t$-minor free graphs by a constant factor, as follows.
\begin{cor}
  If $G$ is a graph with no $K_t$-minor, then
  \begin{equation*}
    \chi_\ell(G) \leq (.99982\cdot\gamma + o(1))t\sqrt{\log t},
  \end{equation*}
  where $\gamma$ is the explicit constant from Theorem~\ref{clique minor average degree bound}.
\end{cor}
\begin{proof}
  It suffices to show that for every $\xi > 0$, if $k_t = .99982(\gamma + \xi)t\sqrt{\log t},$ then for sufficiently large $t$, every $K_t$-minor free graph is $k_t$-list-colorable. Suppose not.  Then there exists a graph $G$ with no $K_t$-minor that is $L$-critical for some $k_t$-list-assignment $L$ where $k_t  \geq 1000t$.  Using Theorem~\ref{clique minor average degree bound}, we may assume $\ad(G) \leq k_t/.99982$.
  
  Let $\alpha = 499/1000$ and $\varepsilon = \alpha^2/1350$.  Since $\omega(G) < t$, $\omega(G) < (\frac{1}{2} - \alpha)k_t$.  Since $G$ is $L$-critical, by Theorem~\ref{critical thm}, $\ad(G) > (1 + \varepsilon)k_t$.  But $1 + \varepsilon \geq 1/.99982$, a contradiction.
\end{proof}

\section{Overview of the Proof of Theorem~\ref{main thm} and Outline of the Paper}\label{overview section}
The following definition will be useful.
\begin{define}
  Let $G$ be a graph.  For each $v\in V(G)$ we let
  \begin{equation*}
    \Gap_G(v) = d(v) + 1 - \omega(v),
  \end{equation*}
  and if $L$ is a  list-assignment for $G$, we let
  \begin{equation*}
    \Save_L(v) = d(v) + 1 - |L(v)|.
  \end{equation*}
\end{define}
If the graph $G$ or list-assignment $L$ is clear from the context, we may omit the subscript $G$ or $L$ in $\Gap$ and $\Save$, respectively.
Note that the conditions of Theorem~\ref{main thm} imply that for each vertex $v\in V(G)$, $\Gap(v) - \Save(v)\geq \log^{\logexp}(\Delta(G))$ and $\Save(v) \leq \varepsilon\Gap(v)$.

First we discuss our strategy for proving Theorem~\ref{main thm}.  We use a variant of a technique called the ``naive coloring procedure,'' given its name by Molloy and Reed~\cite{MR00}.  Essentially, we analyze a random partial coloring of a graph and prove that with nonzero probability this partial coloring can be extended deterministically to a coloring of the whole graph.  The random partial coloring is described formally in Definition~\ref{random coloring procedure}.  After the random partial coloring, we let $G'$ be the subgraph induced by $G$ on the vertices that are not colored, and we let $L'$ be a list-assignment for $G$ so that any $L'$-coloring of $G'$ can be combined with the random partial coloring to obtain an $L$-coloring of $G$.  We prove that with nonzero probability $G'$ is $L'$-colorable.  To do this, we would like to show that with high probability, for every vertex $v\in V(G')$, $|L'(v)| > d_{G'}(v)$, i.e.\ that $\Save_{L'}(v) \leq 0$.  However, this is not the case.  In fact, it may be likely that $\Save_{L'}(v) = \Save_{L}(v)$.  For example, the neighborhood of a vertex $v$ may form $\sqrt{d(v)}$ cliques, while for the list-assignment $L$, the vertices in each clique have the same list of available colors and vertices in different cliques have disjoint lists of available colors.  Nevertheless, if a vertex $v$ has many neighbors with at least as many available colors, we are able to show that $\Save_{L'}(v) < \Save_L(v)$.  This motivates the following definitions.

\begin{define}
  Let $\alpha$ be some constant to be determined later.  Let $G$ be a graph with list-assignment $L$, let $v\in V(G)$, and let $u\in N(v)$.
  \begin{itemize}
  \item If $|L(u)| < |L(v)|$, then we say $u$ is a \textit{subservient neighbor} of $v$.
  \item If $|L(u)| \in [|L(v)|, (1 + \alpha)|L(v)|)$, then we say $u$ is an \textit{egalitarian neighbor} of $v$.
  \item If $|L(u)| \geq (1 + \alpha)|L(v)|$, then we say $u$ is a \textit{lordlier neighbor} of $v$.
  \end{itemize}
  For convenience, we will let $\aberrant(v)$ denote the set of lordlier neighbors of $v$, $\egal(v)$ denote the set of egalitarian neighbors of $v$, and $\subserv(v)$ denote the set of subservient neighbors of $v$.
\end{define}

\begin{define}
  Let $\beta$ be some constant to be determined later.  Let $G$ be a graph with list-assignment $L$, let $v\in V(G)$, and let $u$ be an egalitarian neighbor of $v$.
  \begin{itemize}
  \item If $|L(u)| < |L(v)| + \beta\Gap(v)$, then we say $u$ is a \textit{strongly egalitarian neighbor} of $v$.
  \item If $|L(u)| \geq |L(v)| + \beta\Gap(v)$, then we say $u$ is a \textit{weakly egalitarian neighbor} of $v$.
  \end{itemize}
  For convenience, we will let $\strongEgal(v)$ denote the set of strongly egalitarian neighbors of $v$, $\weakEgal(v)$ denote the set of weakly egalitarian neighbors of $v$, and $\notEgal(v) = N(v) - \egal(v)$.
\end{define}

If a vertex $v$ has many subservient neighbors, then we say $v$ is \textit{lordly}.  The names ``subservient'', ``egalitarian'', and ``lordlier'' neighbors are evocative of feudalism in medeival Europe, where power is analogous to list size.  As mentioned previously, if $v$ is a lordly vertex, we are unable to guarantee that $\Save_{L'}(v) < \Save_L(v)$ for certain list-assignments for $v$'s subservient neighbors.  We resolve this issue by coloring vertices before their subservient neighbors when finding an $L'$-coloring, thus giving ``priority'' to the lordly vertices.

A lordlier neighbor also has the power to choose from more colors.
If $v$ has many lordlier neighbors or weakly egalitarian neighbors, then it is likely that after the random partial coloring $v$ has many neighbors receiving a color not in $L(v)$.  If $v$ has many egalitarian neighbors, then it is likely that after the random partial coloring there are many colors assigned to multiple neighbors of $v$.  In both cases, $\Save_{L'}(v) < \Save_L(v)$.

A common technique in coloring is to attempt to greedily color a vertex of smallest degree, since fewer neighbors means fewer potential color conflicts.  However, for our ``local version,'' this technique is not so useful because vertices of lower degree also have fewer available colors.  Our trick to finding an $L'$-coloring of $G'$ is to order the vertices of $G'$ by the size of their list in $L$, from greatest to least, and color greedily, which may seem counterintuitive.  This works because we are able to guarantee for every vertex $v\in V(G')$, that $\Save_{L'}(v)$ is smaller than the number of neighbors of $v$ in $G'$ that will be colored after $v$ in this ordering, and thus $|L'(v)|$ is larger than the number of neighbors of $v$ in $G'$ that will be colored before $v$ in this ordering.  

For each vertex $v$, after an application of our naive coloring procedure, we refer to the number of neighbors of $v$ receiving a color not in $L(v)$, plus the multiplicity less 1 of each color in $L(v)$ assigned to multiple neighbors, plus the number of uncolored subservient neighbors of $v$ as the ``savings'' for $v$.  In order to prove Theorem~\ref{main thm}, we first prove Theorem~\ref{metatheorem}, which essentially says that it suffices to show that the expected savings for each vertex is at least $\Save_L(v)$ and is also sufficiently large.  Here ``sufficiently large'' means $\mathrm{poly}\log\Delta$, which we need in order to show that the savings for each vertex is sufficiently close to its expectation with probability inverse to a polynomial in $\Delta$, in which case we can apply the Lov\'{a}sz Local Lemma to guarantee an outcome for which the savings for every vertex is close to its expectation.

Using Theorem~\ref{metatheorem} it suffices to show that the expected savings for a vertex $v$ is $\Omega(\Gap(v))$.  If the savings for each vertex $v$ is at least $\Omega(\Gap(v))$, then the condition in Theorem~\ref{main thm} that $\Save(v) \leq \varepsilon\Gap(v)$ guarantees that the savings for $v$ is at least $\Save(v)$ if $\varepsilon$ is small enough.  Moreover, the technical condition in Theorem~\ref{main thm} that $\Gap(v) - \Save(v) \geq \log^{10}(\Delta(G))$ ensures that the savings are large enough to obtain concentration.  In order to prove Theorem~\ref{main thm} without this latter condition with our methods, it is necessary to find a way to show that a vertex $v$ with $\Gap(v) = O(\log^{10}(\Delta(G)))$ still has savings at least on the order of $\log^{10}(\Delta(G))$.

\subsection{Outline of the Paper}

We prove Theorem~\ref{critical thm} and Theorem~\ref{epsilon mad} in Section~\ref{critical section}.  The rest of the paper is devoted to the proof of Theorem~\ref{main thm}.

In Section~\ref{metatheorem section}, we formalize the previous discussion on the ``naive coloring procedure'' and prove Theorem~\ref{metatheorem}, which could be considered a ``metatheorem.''  We also use Theorem~\ref{metatheorem} in a follow-up paper~\cite{KP19}.  In order to prove Theorem~\ref{metatheorem}, we need to show that the savings for each vertex is concentrated around its expectation.  Lemma~\ref{concentration lemma} makes this precise.  We prove Lemma~\ref{concentration lemma} in Section~\ref{concentration section}.  

Before proving Theorem~\ref{main thm}, in Section~\ref{structure section} we prove that a minimum counterexample to Theorem~\ref{main thm} has some desirable structure.  The main result of Section~\ref{structure section} is Theorem~\ref{structure thm}, which says that in a minimum counterexample $G$, each $v\in V(G)$ either has many non-adjacent egalitarian neighbors, many lordlier neighbors, or many subservient neighbors.  The idea to separate the strongly egalitarian neighbors from the weakly egalitiarian neighbors is crucial here, because the weakly egalitarian neighbors of a vertex $v$ are also likely to receive a color not in $L(v)$.

In Section~\ref{main proof section}, we exploit this structure to lower bound the expected value of each type of savings in Lemmas~\ref{lordly save}, \ref{aberrant save}, and \ref{sparse save}.  Using these lemmas in conjunction with Theorem~\ref{metatheorem}, we prove Theorem~\ref{main thm} in Section~\ref{main proof section}.

In Section~\ref{concentration section} we prove Lemma~\ref{concentration lemma}, which completes the proof of Theorem~\ref{metatheorem}.  In order to prove this lemma we needed to develop a new ``concentration inequality,'' Theorem~\ref{exceptional talagrand's}, which provides sufficient conditions for a random variable to be concentrated around its expectation with high probability.  Theorem~\ref{exceptional talagrand's} is similar to results provided in \cite{BJ15, MR00}, but those did not work for our purposes.  We prove Theorem~\ref{exceptional talagrand's} in Appendix~\ref{tala proof section} using Talagrand's inequality.

As mentioned above, in Section~\ref{critical section} we prove Theorem~\ref{critical thm} and Theorem~\ref{epsilon mad}.
 
\section{The Local Naive Coloring Procedure}\label{metatheorem section}

The main result of this section is Theorem~\ref{metatheorem}, which gives sufficient conditions for our naive coloring procedure to be extended to a coloring of the whole graph.  Namely, we need that the expected ``savings'' for each vertex is at least $\Save_L(v)$ and is sufficiently large.  Before we can state Theorem~\ref{metatheorem}, we need to formalize our naive coloring procedure.

In this section, we let $G$ be a graph with list-assignment $L$, $\varepsilon,\sigma, \in [0, 1)$, $\rho \in [0, 1]$, and $\ordering$ be a partial ordering of $V(G)$.  When we apply Theorem~\ref{metatheorem} to prove Theorem~\ref{main thm} in Section~\ref{main proof section}, we let $\sigma$ be 0 and for $u,v\in V(G)$, we have $u\ordering v$ if $|L(u)| < |L(v)|$.  We include these parameters because we plan to use Theorem~\ref{metatheorem} in a follow-up paper in which $\sigma > 0$ and $\ordering$ is different.  In order to demonstrate how $\sigma$ will be used, we need the following definition.

\begin{define}
  For each $v\in V(G)$ and $u\in N(v)$, we say $u$ is a \textit{$\sigma$-egalitarian neighbor} of $v$ if $u$ has at least $(1 - \sigma)|L(v)|$ available colors.  We let $\egal_\sigma(v)$ denote the set of $\sigma$-egalitarian neighbors of $v$.
\end{define}

As we will see in Section~\ref{concentration section} and as alluded to in Section~\ref{overview section}, we cannot prove that the number of colors assigned to multiple neighbors of $v$ that are not $\sigma$-egalitarian is concentrated around its expectation.

To simplify our probabilistic analysis, we use a generalization of list-coloring known as \textit{correspondence coloring}, first introduced by Dvo\v{r}\'{a}k and Postle~\cite{DP15}.  Using correspondence coloring also helps improve the value of $\varepsilon$ in Theorem \ref{main thm}, because we can assume egalitarian neighbors of a vertex $v$ have at least $|L(v)|$ colors in common, thus making it more likely that a color is assigned to more than one of them.  Recall that $L$ is a list-assignment for $G$.

\begin{define}\label{correspondence coloring}\leavevmode
  \begin{itemize}
  \item If $M$ is a function defined on $E(G)$ where for each $e=uv\in E(G)$, $M_e$ is a matching of $\{u\}\times L(u)$ and $\{v\}\times L(v)$, then $(L, M)$ is a \textit{correspondence assignment} for $G$.  If for each $e=uv\in E(G)$ the matching $M_e$ saturates at least one of $\{u\}\times L(u)$ or $\{v\}\times L(v)$, then we say $(L, M)$ is \textit{total}.
	
  \item An {\em $(L, M)$-coloring} of $G$ is a function $\phi:V(G)\rightarrow\mathbb N$ such that $\phi(u)\in L(u)$ for every $u\in V(G)$, and for every $e=uv\in E(G)$, $(u, \phi(u))(v, \phi(v))\notin M_e$.  If $G$ has an $(L, M)$-coloring, then $G$ is {\em $(L, M)$-colorable}.
  \end{itemize}
\end{define}
One defines a \textit{$k$-correspondence assignment} and the \textit{correspondence chromatic number} in the natural way, but we do not need these terms.  For convenience, if $uv\in E(G)$, $c_1\in L(u)$, $c_2\in L(v)$, and $(u, c_1)(v, c_2)\in M_{uv}$, we will just say $c_1c_2\in M_{uv}$.  Note that if for each $e=uv\in E(G)$ and $c\in L(u)\cap L(v)$, $cc\in M_{uv}$, then an $(L, M)$-coloring is an $L$-coloring.
 
For the remainder of this section, let $(L, M)$ be a correspondence assignment for $G$.  We will actually define our naive coloring procedure for correspondence coloring.  First, we need some definitions.
\begin{define}\leavevmode
  \begin{itemize}
  \item We say a \textit{naive partial $(L, M)$-coloring} of $G$ is a pair $(\phi, \uncolvtcs)$ where $\phi : V(G)\rightarrow\mathbb N$ such that $\phi(u)\in L(u)$ for every $u\in V(G)$ and $\uncolvtcs\subseteq V(G)$ is a set of \textit{uncolored vertices} such that $\phi|_{V(G)-\uncolvtcs}$ is an $(L, M)$-coloring of $G - \uncolvtcs$.
  \item If $(\phi, \uncolvtcs)$ is a naive partial $(L, M)$-coloring of $G$, for each $v\in \uncolvtcs$, let
    \begin{equation*}
      \newList(v) = L(v) \backslash \{c\in L(v) : \exists u\in N(v)\backslash V(G'), c\phi(u)\in M_{vu}\}
    \end{equation*}
    and for each $uv\in E(G[U])$, let $\newmatching_{uv}$ be the matching induced by $M_{uv}$ on $\{u\}\times \newList(u)$ and $\{v\}\times \newList(v)$.
  \end{itemize}
\end{define}

If $(\phi, \uncolvtcs)$ is a naive partial $(L, M)$-coloring of $G$, then we call a vertex $v$ \textit{uncolored} if it is in $\uncolvtcs$, and otherwise we call it \textit{colored}.

The following proposition is self-evident.
\begin{prop}\label{extending partial coloring}
  If $(\phi, \uncolvtcs)$ is a naive partial $(L, M)$-coloring of $G$ and $G[\uncolvtcs]$ is $(\newList, \newmatching)$-colorable, then $G$ is $(L, M)$-colorable.
\end{prop}

The following is a variant of the naive coloring procedure, but it is not the one we use in Theorem~\ref{metatheorem}.  Recall that $\rho \in [0, 1]$.
\begin{define}\label{local-naive-coloring-def}
  The \textit{local naive random coloring procedure with activation probability $\rho$} samples a random naive partial $(L, M)$-coloring $(\phi, U)$ and a set of \textit{activated vertices} $A$ in the following way.
  For each $v\in V(G)$,
  \begin{enumerate}
  \item let $v\in A$ independently at random with probability $\rho$,
  \item choose $\phi(v)\in L(v)$ independently and uniformly at random, and
  \item\label{uncoloring-rule} let $\uncolvtcs = (V(G)\setminus A) \cup \uncolvtcs'$, where $v\in\uncolvtcs'$ if there exists $u\in N(v)\cap A$ such that $|L(u)| \geq |L(v)|$ and $\phi(u)\phi(v) \in M_{uv}$.
  \end{enumerate}
\end{define}

We also consider the following proposition to be self-evident.
\begin{prop}\label{keep probability}
  If $(\phi, \uncolvtcs)$ is a random naive partial $(L, M)$-coloring sampled using the local naive random coloring procedure with activation probability $\rho$, then for each $v\in V(G)$ and $c \in L(v)$,
  \begin{equation*}
    \ProbCond{v\notin \uncolvtcs}{\phi(v) = c} \geq \rho\prod_{\{u\in N(v) : |L(u)| \geq |L(v)|\}}\left(1 - \frac{\rho}{|L(u)|}\right).
  \end{equation*}
\end{prop}

Recall that $\varepsilon\in[0, 1)$.  Let $K_{\varepsilon, \rho} = .999\rho e^{\frac{-\rho}{1 - \varepsilon}}$.  We need the following proposition.
\begin{prop}\label{keep probability is large}
  There exists $\delta = \delta(\varepsilon)$ such that the following holds.  Let $(\phi, \uncolvtcs)$ be a random naive partial $(L, M)$-coloring sampled using the local naive random coloring procedure with activation probability $\rho$.  If for each $v\in V(G)$, $|L(v)| \geq (1 - \varepsilon)d(v)$ and $G$ has minimum degree at least $\delta$, then for each $v\in V(G)$ and $c \in L(v)$,
  \begin{equation*}
    \ProbCond{v\notin \uncolvtcs}{\phi(v) = c} \geq K_{\varepsilon, \rho}.
  \end{equation*}
\end{prop}
\begin{proof}
  By Proposition~\ref{keep probability},
  \begin{equation*}
    \ProbCond{v\notin\uncolvtcs}{\phi(v) = c} \geq \rho \left(1 - \frac{\rho}{(1 - \varepsilon)d(v)}\right)^{d(v)} \geq \rho\left(1 - \frac{\rho^2}{(1 - \varepsilon)^2d(v)}\right)e^{-\frac{\rho}{1 - \varepsilon}}.
  \end{equation*}
  We let $\delta(\varepsilon) = 1000 / (1 - \varepsilon)^2$, and the result follows.
\end{proof}

%The local naive random coloring procedure may be useful for some applications, but it is not sufficient for our purposes.  Using Proposition~\ref{extending partial coloring}, we want to find a naive partial $(L, M)$-coloring $(\phi, \uncolvtcs)$ of $G$ such that $G[\uncolvtcs]$ is $(\newList, \newmatching)$-colorable.  To do this, it suffices to show that for every vertex $v$, $\Save_{\newList}(v)$ is less than the number of uncolored neighbors $u$ of $v$ such that $u\ordering v$.  However, it is possible that for each $u\ordering v$, $\Prob{u\in\uncolvtcs} = 0$ and $\Save_{\newList}(v) > 0$.  For this reason, we modify the local naive random coloring procedure so that $\Prob{u\in \uncolvtcs} > 0$, as follows.
Now we introduce the random coloring procedure that we use in Theorem~\ref{metatheorem}, which is slightly easier to analyze than the local naive random coloring procedure.

\begin{define}\label{random coloring procedure}
  If for each $v\in V(G)$, $|L(v)| \geq (1 - \varepsilon)d(v)$ and $G$ has minimum degree at least $\delta(\varepsilon)$ (as in Proposition~\ref{keep probability is large}), then the \textit{local naive random coloring procedure with activation probability $\rho$ and $\varepsilon$-equalizing coin-flips} samples a random naive partial $(L, M)$-coloring $(\phi, \uncolvtcs)$ (and a set of activated vertices $A$) in the following way.
  \begin{enumerate}
  \item Sample a random naive partial $(L, M)$-coloring $(\phi, \uncolvtcs')$ and a set $A$ of activated vertices using the local naive random coloring procedure with activation probability $\rho$,
  \item for each $v\in V(G)$ and $c \in L(v)$, conduct a ``coin flip'' for $v$ and $c$ that is ``heads'' with probability $1 - K_{\varepsilon, \rho}/\ProbCond{v\notin \uncolvtcs'}{\phi(v) = c}$, and
  \item let $\uncolvtcs = \uncolvtcs' \cup \uncolvtcs''$, where $v \in U''$ if the coin flip for $v$ and $\phi(c)$ is heads.
  \end{enumerate}
\end{define}

For the remainder of this section, we assume $G$ and $(L, M)$ satisfy the assumptions of Definition~\ref{random coloring procedure}, and we let $(\phi, \uncolvtcs)$ be a random naive partial $(L, M)$-coloring and $A$ a set of activated vertices sampled using the the local naive random coloring procedure with activation probability $\rho$ and $\varepsilon$-equalizing coin-flips.  The following proposition shows why the $\varepsilon$-equalizing coin-flips are useful.

\begin{prop}\label{equalized keep probability}
  For each $v\in V(G)$ and $c \in L(v)$,
  \begin{equation*}
    \ProbCond{v\notin \uncolvtcs}{\phi(v) = c} = K_{\varepsilon, \rho}.
  \end{equation*}
\end{prop}
\begin{proof}
  Let $\uncolvtcs = \uncolvtcs'\cup \uncolvtcs''$ as in Definition~\ref{random coloring procedure}.  Note that $\ProbCond{v\notin \uncolvtcs}{\phi(v) = c} = \ProbCond{v\notin \uncolvtcs''}{\phi(v) = c} \cdot \ProbCond{v \notin \uncolvtcs'}{\phi(v) = c}$.  By the choice of $\uncolvtcs''$, we have $\ProbCond{v\notin \uncolvtcs''}{\phi(v) = c} = K_{\varepsilon, \rho} / \ProbCond{v\notin \uncolvtcs'}{\phi(v) = c}$, and the result follows.
\end{proof}

Recall that a vertex $u$ is a $\sigma$-egalitarian neighbor of a vertex $v$ if $|L(u)| \geq (1 - \sigma)|L(v)|$.  Recall also that $\ordering$ is a partial ordering of $V(G)$.  We can now formalize what we mean by the ``savings'' for each vertex, as follows.
\begin{define} For each $v\in V(G)$, we define the following random variables.  
  \begin{itemize}
  \item Let $\aberrance_{v,\sigma}$ count the number of colored $\sigma$-egalitarian neighbors $u$ of $v$ such that $\phi(u)$ is not matched by $M_{uv}$.
  \item Let $\pairs_{v, \sigma}$ and $\trips_{v, \sigma}$ count the number of pairs and triples respectively of colored $\sigma$-egalitarian neighbors of $v$ that receive colors that are matched to the same color in $L(v)$.
  \item Let $\unact_{v, \ordering}$ count the number of non-activated neighbors $u$ of $v$ such that $u\ordering v$.
  \item Let $\savings_{v, \sigma, \ordering} = \aberrance_{v, \sigma} + \unact_{v, \ordering} + \pairs_{v, \sigma} - \trips_{v, \sigma}$.
  \end{itemize}
        
  More precisely, we have that
  \begin{align*}
    &\aberrance_{v,\sigma} = |\{u\in \egal_\sigma(v) \setminus \uncolvtcs  : \phi(u)\notin V(M_{uv})\}|,\\
    \begin{split}
      &\pairs_{v, \sigma} = |\{x,y\in \egal_\sigma(v)\setminus \uncolvtcs, c\in L(v) : \phi(x)c\in M_{xv}\text{ and }\phi(y)c\in M_{yv}\}|,
    \end{split}\\
    \begin{split}
      &\trips_{v, \sigma} = |\{x,y,z\in \egal_\sigma(v)\setminus \uncolvtcs, c\in L(v) :  \phi(x)c\in M_{xv}, \phi(y)c\in M_{yv}, \text{ and }\phi(z)c\in M_{zv}\}|, \text{ and}
    \end{split}\\
      &\unact_{v, \ordering} = |\{u\in N(v)\setminus A : u \ordering v\}|.\\
  \end{align*}
\end{define}

\begin{remark}
  In the journal version of this paper, there are a few mistakes in the preceding part of this section that we have corrected.  In Sections~\ref{main proof section} and~\ref{concentration section} and in the remainder of this section, we make minor adjustments to account for these changes.  We descrbe these changes below.  
  \begin{enumerate}
  \item  In the journal version of this paper, Proposition~\ref{keep probability} is incorrect.  We correct this mistake by reversing the inequality in Step~\ref{uncoloring-rule} of Definition~\ref{local-naive-coloring-def}.  However, in the previous version, instead of $\unact_{v, \ordering}$, we used the random variable $\subservience_{v, \ordering}$ where $\subservience_{v, \ordering}(\phi, \uncolvtcs) = |\{u \in N(v) \cap U : u \ordering v\}|$, and with the change to Definition~\ref{local-naive-coloring-def}, $\subservience_{v, \ordering}$ is no longer concentrated around its expectation.  Thus, this version introduces activation probabilities, a commonly used technique, in order to define $\unact_{v, \ordering}$.  We replace $\subservience_{v, \ordering}$ with $\unact_{v, \ordering}$ throughout the paper with minimal changes.  Similarly, in the previous version, instead of $\aberrance_{v, \sigma}$, we used $\aberrance_v$ where $\aberrance_v(\phi, U) = |\{u\in N(v) \setminus \uncolvtcs  : \phi(u)\notin V(M_{uv})\}|$, and with the change to Definition~\ref{local-naive-coloring-def}, $\aberrance_v$ is no longer concentrated around its expectation.  Nevertheless, $\aberrance_{v, \sigma}$ is, and we can replace $\aberrance_v$ with $\aberrance_{v, \sigma}$ throughout the paper with minimal changes.

  \item In Definition~\ref{random coloring procedure} of the journal version of this paper, we conduct only one $\varepsilon$-equalizing coin flip for each vertex.  However, with this definition, if $u \in N(v)$, then the events ``$\phi(u) \notin V(M_{uv})$'' and ``$u\notin U$'' (as in the definition of $\aberrance_{v, \sigma}$) are not necessarily independent, and likewise, if $x,y\in\egal_\sigma(v)$ and $c_x\in L(x), c_y\in L(y)$ are colors such that $cc_x \in M_{vx}$ and $cc_y \in M_{vy}$, then the events ``$x\notin\uncolvtcs$'', ``$y\notin \uncolvtcs$'', and ``$\phi(x) = c_x$ and $\phi(y) = c_y$'' are not necessarily independent (even if $c_xc_y \notin M_{xy}$), but we assume so in Section~\ref{main proof section}.  By conducting an $\varepsilon$-equalizing coin flip for each vertex and color in its list, this issue is resolved.

  \item The journal version of this paper incorrectly defines pairs and triples to count only nonadjacent pairs and triples.  In the list coloring setting, specifying nonadjacency makes no difference, as any pair of colored neighbors receiving the same color are nonadjacent.  However, in the more general setting of correspondence coloring, the distinction matters.  In particular,~\eqref{random variables save equation} did not hold with the previous definition, and this definition corrects that mistake.  We need to slightly adjust Lemmas~\ref{sparse save} and~\ref{expected savings beats gap} to account for this difference.
  \end{enumerate}
\end{remark}

We are now prepared to state Theorem~\ref{metatheorem}.

\begin{thm}\label{metatheorem}
  For every $\xi_1, \xi_2 > 0$, $\varepsilon,\sigma\in[0, 1)$, and $\rho \in [0,1]$, there exists $\Delta_0$ such that the following holds.  
  If $G$ is a graph with correspondence-assignment $(L, M)$ and a partial ordering $\ordering$ of $V(G)$ such that
  \begin{enumerate}
  \item $\Delta \geq \Delta_0$,
  \item $G$ has maximum degree at most $\Delta$ and minimum degree at least $\delta(\varepsilon)$ (as in Proposition~\ref{keep probability is large}),
  \item[] and for each $v\in V(G)$,
  \item $\Delta \geq |L(v)| \geq (1 - \varepsilon)d(v)$, and
  \item $\Expect{\savings_{v, \sigma, \ordering}} \geq \max\{(1 + \xi_1)\Save_L(v), \xi_2\log^{\logexp}\Delta\}$,
  \end{enumerate}
  then $G$ is $(L, M)$-colorable.
\end{thm}

In order to prove Theorem~\ref{metatheorem}, we need the following lemma.
\begin{lemma}\label{every vertex saves lemma}
  Under the conditions of Theorem~\ref{metatheorem}, if $(\phi, \uncolvtcs)$ is a random naive partial coloring sampled using the local naive random coloring procedure with $\varepsilon$-equalizing coin-flips, then with nonzero probability every $v\in V(G)$ satisfies
  \begin{equation}\label{enough colors equation}
    \Save_{\newList}(v) \leq \unact_{v, \ordering}.
  \end{equation}
\end{lemma}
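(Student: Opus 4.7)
The plan is to apply the Lov\'asz Local Lemma to find a positive-probability outcome of the local naive random coloring procedure with $\varepsilon$-equalizing coin flips in which every vertex $v\in V(G)$ satisfies $\savings_{v,\sigma,\ordering}\geq \Save_L(v)$, and then to deduce \eqref{enough colors equation} from such an outcome deterministically.

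For the deterministic reduction, I would fix $v\in\uncolvtcs$ (\eqref{enough colors equation} being vacuous otherwise), write $C(v)=N(v)\setminus\uncolvtcs$, and for each $c\in L(v)$ set $X_c^\sigma=|\{u\in C(v)\cap\egal_\sigma(v):\phi(u)c\in M_{uv}\}|$. A colored $\sigma$-egalitarian neighbor $u$ with $\phi(u)\notin V(M_{uv})$ reduces $d_{G[\uncolvtcs]}(v)$ by one without removing a color from $L(v)$, and for each $c\in L(v)$ with $X_c^\sigma\geq 2$ the ``extra'' matched colored neighbors further reduce $d_{G[\uncolvtcs]}(v)$ without additional loss of colors from $\newList(v)$. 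Summing these contributions (non-$\sigma$-egalitarian colored neighbors contribute nonnegatively and can be discarded) and invoking the Bonferroni-style bound $(X_c^\sigma-1)^+\geq\binom{X_c^\sigma}{2}-\binom{X_c^\sigma}{3}$ for each $c$ yields
\[
\Save_L(v)-\Save_{\newList}(v)\;\geq\;\aberrance_{v,\sigma}+\pairs_{v,\sigma}-\trips_{v,\sigma}.
\]
Assuming $\savings_{v,\sigma,\ordering}\geq \Save_L(v)$ and rearranging then gives $\Save_{\newList}(v)\leq \unact_{v,\ordering}$, which is exactly \eqref{enough colors equation}.

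For the probabilistic step, I would let $A_v=\{\savings_{v,\sigma,\ordering}<\Save_L(v)\}$ for each $v$. The two lower bounds on $\Expect{\savings_{v,\sigma,\ordering}}$ in the hypothesis play complementary roles: the multiplicative slack $(1+\xi_1)\Save_L(v)$ certifies that $A_v$ is a genuinely large relative deviation, and the additive lower bound $\xi_2\log^{\logexp}\Delta$ makes that deviation large enough in absolute terms to be polynomially unlikely. Together with the concentration lemma (Lemma~\ref{concentration lemma}, proved in Section~\ref{concentration section}), this should give $\Prob{A_v}\leq \Delta^{-K}$ for any preassigned constant $K$, once $\Delta\geq \Delta_0$. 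Since each $A_v$ is determined by the random choices (activations, colors, and $\varepsilon$-equalizing coin flips) at vertices within distance $2$ of $v$, the dependency graph of $\{A_v\}_{v\in V(G)}$ has maximum degree at most $\Delta^4$. Choosing $K=5$ and taking $\Delta_0$ large enough that $4e\cdot\Delta^{-K}\cdot\Delta^4<1$, the symmetric Lov\'asz Local Lemma will produce a positive-probability outcome in which no $A_v$ occurs, which by the deterministic reduction gives \eqref{enough colors equation} for every $v$.

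The main obstacle is the concentration step: verifying that $\savings_{v,\sigma,\ordering}$ is tightly concentrated around its expectation. This is precisely why $\aberrance$, $\pairs$ and $\trips$ are restricted to $\sigma$-egalitarian neighbors---a single lordlier neighbor could ruin the Lipschitz or certifiability constants of these statistics---and it is what forces the authors to develop the new Talagrand-type inequality (Theorem~\ref{exceptional talagrand's}). The deterministic reduction and the LLL invocation above are routine once the concentration lemma is in hand; the real work lies in Lemma~\ref{concentration lemma}.
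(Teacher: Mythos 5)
Your proposal is correct and follows the paper's proof closely: a bad event per vertex, bounded via Lemma~\ref{concentration lemma} together with the expectation hypotheses $\Expect{\savings_{v,\sigma,\ordering}}\geq\max\{(1+\xi_1)\Save_L(v),\xi_2\log^{\logexp}\Delta\}$, a dependency degree of at most $\Delta^4$, the Lov\'asz Local Lemma, and the deterministic reduction \eqref{random variables save equation}. The only differences are cosmetic: the paper defines $\mathcal A_v$ directly as the failure of \eqref{enough colors equation} and shows $\mathcal A_v\subseteq\mathcal A'_v$ where $\mathcal A'_v$ is a deviation event for $\savings_{v,\sigma,\ordering}$, whereas you center the bad event on $\{\savings_{v,\sigma,\ordering}<\Save_L(v)\}$ and spell out the Bonferroni bound $(X-1)^+\geq\binom{X}{2}-\binom{X}{3}$ behind \eqref{random variables save equation} that the paper only sketches.
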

Observe that by the inclusion-exclusion principle, if we let the \textit{repetitiveness} of color $c\in L(v)$ be one less than the number of colored neighbors $u\in N(v)$ such that $\phi(u)c\in M_{uv}$, then $\pairs_{v, \sigma} - \trips_{v, \sigma}$ undercounts the total repetitiveness of colors assigned to neighbors of $v$.  Therefore
\begin{equation}\label{random variables save equation}
  \Save_L(v) - \Save_{\newList}(v) \geq \aberrance_{v,\sigma} + \pairs_{v,\sigma} - \trips_{v,\sigma}.
\end{equation}

We need to show that with high probability, these random variables are close to their expectation.  We make this precise in the following definition.

\begin{define}
  We say a random variable $X$ is $\Delta$-\textit{concentrated} if
  \begin{equation*}
    \Prob{|X - \Expect{X}| \geq 2\max\{\Expect{X}^{5/6}, \log^{\logexpless}\Delta\}} < \frac{\Delta^{-4}}{16}.
  \end{equation*}
\end{define}

We will use the following lemma to prove Lemma~\ref{every vertex saves lemma}.
\begin{lemma}\label{concentration lemma}
  If $\Delta$ is sufficiently large, $G$ has maximum degree at most $\Delta$, and $\max_v |L(v)| \leq \Delta$, then for each $v\in V(G)$, $\aberrance_{v,\sigma}, \unact_{v, \ordering}, \pairs_{v, \sigma},$ and $\trips_{v, \sigma}$ are $\Delta$-concentrated.
\end{lemma}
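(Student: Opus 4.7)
The plan is to apply the new ``exceptional'' Talagrand inequality (Theorem~\ref{exceptional talagrand's}) to each of $\aberrance_{v,\sigma}$, $\pairs_{v,\sigma}$, and $\trips_{v,\sigma}$, while handling $\unact_{v,\ordering}$ by a direct Chernoff bound. The independent trials driving every random variable are, for each $w \in V(G)$, the activation bit $\mathbb{1}[w \in A]$, the color $\phi(w) \in L(w)$, and the family of $\varepsilon$-equalizing coin flips $\{\mathrm{flip}(w,c) : c \in L(w)\}$. With respect to these trials the joint sample space factorizes, so standard product-space concentration inequalities apply.

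First I would dispose of $\unact_{v,\ordering} = \sum_{u \in N(v),\, u\ordering v} \mathbb{1}[u\notin A]$, a sum of at most $\Delta$ independent Bernoulli$(1-\rho)$ random variables. A Chernoff bound gives concentration with deviation $O(\sqrt{\Expect{X}\log \Delta})$ and failure probability $\exp(-\Omega(\log^{\logexpless}\Delta)) \ll \Delta^{-4}/16$. For $\aberrance_{v,\sigma}$, I would write it as $\sum_{u \in \egal_\sigma(v)} Y_u$, where $Y_u$ is the indicator that $u$ is activated, receives a color not matched by $M_{uv}$, is not subsequently uncolored by a higher-list neighbor, and survives its equalizing coin flip. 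A single contributing $u$ is certified by $O(1)$ trials at $u$ together with trials at its neighbors; the worst-case Lipschitz constant (the codegree within $\egal_\sigma(v)$) can a priori be $\Delta$, so the straight Talagrand bound is insufficient. I would therefore define an exceptional event capturing outcomes in which some vertex has too many same-color activated neighbors, show this event has probability $o(\Delta^{-4})$ by a union bound over colors, and invoke Theorem~\ref{exceptional talagrand's} on the complement where a single-trial perturbation changes $\aberrance_{v,\sigma}$ by only $O(\log\Delta)$.

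The real obstacle is the concentration of $\pairs_{v,\sigma}$ and $\trips_{v,\sigma}$, since changing the color of one $u \in \egal_\sigma(v)$ could in principle add or remove on the order of $|\egal_\sigma(v)|$ pairs or $|\egal_\sigma(v)|^2$ triples. The reason the restriction to $\sigma$-egalitarian neighbors is essential is that for every $u \in \egal_\sigma(v)$ and every $c \in L(v)$, the matching $M_{uv}$ matches $c$ to at most one color in $L(u)$, which is chosen with probability at most $1/((1-\sigma)|L(v)|)$; hence the expected number of $\sigma$-egalitarian neighbors receiving a color matched to any fixed $c$ is $O(1)$, using the lower bound on $|L(v)|$ in Theorem~\ref{metatheorem}. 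A Chernoff-plus-union-bound argument then shows that, except on an exceptional event of probability $\Delta^{-\omega(1)}$, no color $c \in L(v)$ is matched to more than $O(\log \Delta)$ $\sigma$-egalitarian neighbors of $v$. On the complement, each unit of $\pairs_{v,\sigma}$ (resp.\ $\trips_{v,\sigma}$) has a certificate of size $2$ (resp.\ $3$), and a single trial change alters the variable by at most $O(\log\Delta)$ (resp.\ $O(\log^2\Delta)$); feeding these bounds into Theorem~\ref{exceptional talagrand's} yields deviations of order $\sqrt{\Expect{X}}\cdot\mathrm{polylog}\,\Delta$, comfortably smaller than $\max\{\Expect{X}^{5/6}, \log^{\logexpless}\Delta\}$ for $\Delta$ large.

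The hardest step will be calibrating the exceptional event for $\pairs_{v,\sigma}$ and $\trips_{v,\sigma}$: it must simultaneously be rare enough that its probability fits within the $\Delta^{-4}/16$ budget, yet robust enough that on its complement both the certificate size and the single-trial Lipschitz constant are only $\mathrm{polylog}\,\Delta$. I expect this calibration to be exactly what Theorem~\ref{exceptional talagrand's} is engineered for, and the reason a new version of Talagrand's inequality was needed: existing formulations with exceptional outcomes (such as the one in~\cite{MR00}) do not interact cleanly with the two-layer uncoloring rule of Definition~\ref{random coloring procedure}, and the interplay between the activation bits, color choices, and equalizing coin flips requires a concentration inequality stated directly in terms of product-space trials with a separate ``bad-outcome'' carve-out.
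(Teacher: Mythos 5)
Your high-level plan — product space with activation bits, color choices, and per-color coin flips as independent trials; exceptional event $\Omega^*_{v,\sigma}$ ruling out a vertex having more than $\log\Delta$ same-matched $\sigma$-egalitarian neighbors of $v$; Lipschitz constants $\log\Delta$, $\log^2\Delta$ for pairs and triples; Theorem~\ref{exceptional talagrand's} as the engine — agrees with the paper's proof in structure and in parameter choice. But there is a genuine gap in the middle: you propose to apply Theorem~\ref{exceptional talagrand's} \emph{directly} to $\aberrance_{v,\sigma}$, $\pairs_{v,\sigma}$, and $\trips_{v,\sigma}$, and this does not work, for reasons you gesture at but misdiagnose as a Lipschitz issue.

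The obstruction is in the certification condition, not the Lipschitz constant. An $(r,\change)$-certificate $I$ for $X(\omega)\geq s$ must have the property that \emph{any} $\omega'\notin\Omega^*$ agreeing with $\omega$ on $I$ (but differing arbitrarily outside $I$) still has $X(\omega')\geq s$. Your random variable $\aberrance_{v,\sigma}$ counts $\sigma$-egalitarian neighbors $u$ of $v$ that receive a bad color \emph{and are colored}, i.e.\ not uncolored. The event ``$u$ is colored'' is a universal statement over all of $u$'s neighbors (none of them simultaneously activated and matched to $\phi(u)$) and over $u$'s own coin flip; to lock it in, $I$ would have to contain the trials at \emph{all} of $u$'s neighbors, making $|I|$ of order $\Delta s$ rather than $O(s)$. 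No exceptional event rescues this, since a single trial outside $I$ can flip $u$ from colored to uncolored and the certificate condition forbids any loss from such changes. The same failure affects $\pairs_{v,\sigma}$ and $\trips_{v,\sigma}$: your claimed certificate sizes $2$ and $3$ account only for the colors at the pair or triple, not for the fact that each vertex must also be colored.

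The paper's resolution — and the step missing from your proposal — is to decompose each variable as a difference of two random variables that \emph{are} certifiable. One writes $\aberrance_{v,\sigma}=\totesabs_v-\uncolabs_v$, where $\totesabs_v$ counts all $\sigma$-egalitarian neighbors of $v$ with a bad color (ignoring colored/uncolored status; trivially $(1,1)$-certifiable since only the $\phi(u)$ trial matters), and $\uncolabs_v$ counts those that are additionally \emph{uncolored}. The point is that ``$u$ is uncolored'' is an \emph{existential} event: it holds because $u\notin A$, or because $u$'s coin flip is heads, or because some specific higher-list neighbor $u'$ is activated with a conflicting color. One can include in $I$ precisely this witness (the activation trial of $u$, or the coin flip trial, or the trials at $u$ and $u'$), giving a certificate of size $3s$; the exceptional set $\Omega^*_{v,\sigma}$ is what bounds the damage when a witness trial in $I$ is flipped, yielding $\change=\log\Delta$. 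The analogous decompositions $\pairs_{v,\sigma}=\totespairs_{v,\sigma}-\uncolpairs_{v,\sigma}$ and $\trips_{v,\sigma}=\totestrips_{v,\sigma}-\uncoltrips_{v,\sigma}$ (with $r=6,\change=\log^2\Delta$ and $r=9,\change=\log^3\Delta$) handle the pairs and triples. Concentration of each piece gives concentration of the difference, with the constant in the definition of $\Delta$-concentration accommodating the doubling. Your treatment of $\unact_{v,\ordering}$ by Chernoff is correct and a legitimate simplification, though the paper handles it in the same Talagrand framework for uniformity.
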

We defer the proof of Lemma~\ref{concentration lemma} to Section~\ref{concentration section}.  Lemma~\ref{concentration lemma} is the reason why we need to include the parameter $\sigma$.

To prove Lemma \ref{every vertex saves lemma}, we will also use the Lov\' asz Local Lemma.
\begin{lemma}[Lov\'asz Local Lemma]\label{local lemma}
Let $p\in[0,1)$ and $\mathcal A$ a finite set of events such that for every $A\in\mathcal A$,
\begin{enumerate}
	\item $\Prob{A} \leq p$, and
	\item $A$ is mutually independent of a set of all but at most $d$ other events in $\mathcal A$.
\end{enumerate}
If $4pd\leq 1$, then the probability that none of the events in $\mathcal A$ occur is strictly positive.
\end{lemma}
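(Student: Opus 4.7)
The plan is to establish the Local Lemma via the standard Erd\H{o}s--Lov\'asz induction. The key auxiliary claim is that for every event $A \in \mathcal A$ and every subset $S \subseteq \mathcal A \setminus \{A\}$ for which $\Prob{\bigcap_{B \in S} \overline{B}} > 0$,
\begin{equation*}
\ProbCond{A}{\bigcap_{B \in S} \overline{B}} \leq 2p.
\end{equation*}
This strengthened conditional bound, rather than the weaker unconditional $\Prob{A} \leq p$, is what enables chaining via the product rule in the conclusion.

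I would prove the claim by induction on $|S|$. The base case $|S| = 0$ is hypothesis (1), since $p \leq 2p$. For the inductive step, I would split $S$ into $S_1 = \{B \in S : A \text{ is not mutually independent of } B\}$, so $|S_1| \leq d$ by hypothesis (2), and $S_2 = S \setminus S_1$, and write
\begin{equation*}
\ProbCond{A}{\bigcap_{B \in S} \overline{B}} = \frac{\ProbCond{A \cap \bigcap_{B \in S_1} \overline{B}}{\bigcap_{B \in S_2} \overline{B}}}{\ProbCond{\bigcap_{B \in S_1} \overline{B}}{\bigcap_{B \in S_2} \overline{B}}}.
\end{equation*}
The numerator is at most $\ProbCond{A}{\bigcap_{B \in S_2} \overline{B}} = \Prob{A} \leq p$, using that $A$ is mutually independent of the events in $S_2$. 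The denominator is at least
\begin{equation*}
1 - \sum_{B \in S_1} \ProbCond{B}{\bigcap_{B' \in S_2} \overline{B'}} \geq 1 - 2pd \geq \tfrac{1}{2},
\end{equation*}
by the union bound, the inductive hypothesis applied with conditioning set $S_2$ (whose size is strictly smaller than $|S|$ whenever $S_1$ is nonempty) to each $B \in S_1$, and the hypothesis $4pd \leq 1$. Dividing yields the bound $2p$.

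With the claim in hand, I would enumerate $\mathcal A = \{A_1, \ldots, A_n\}$ in any order and finish via the chain rule,
\begin{equation*}
\Prob{\bigcap_{i=1}^n \overline{A_i}} = \prod_{i=1}^n \ProbCond{\overline{A_i}}{\bigcap_{j<i} \overline{A_j}} \geq \prod_{i=1}^n (1 - 2p) > 0,
\end{equation*}
since $4pd \leq 1$ with $d \geq 1$ forces $2p \leq 1/(2d) < 1$. The degenerate case $d = 0$ is handled directly: all events are then mutually independent, so $\Prob{\bigcap_i \overline{A_i}} = \prod_i (1 - \Prob{A_i}) \geq (1 - p)^n > 0$ since $p < 1$. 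One also verifies by the same denominator lower bound that every conditioning event along the way has positive probability, so the conditionals are well-defined.

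The main obstacle is the careful bookkeeping in the inductive step: one cannot apply independence naively because $A$ may depend on the events in $S_1$, so the proof hinges on isolating $S_1$ inside the numerator (where it is discarded by monotonicity) and inside the denominator (where it is controlled by the inductive hypothesis applied to the strictly smaller conditioning set $S_2$). The constant $2$ in the claim is chosen precisely so that the union bound in the denominator yields $2pd \leq 1/2$, which is what the hypothesis $4pd \leq 1$ was tailored to provide.
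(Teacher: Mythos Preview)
The paper does not prove the Lov\'asz Local Lemma; it is merely stated as a standard tool (Lemma~\ref{local lemma}) and invoked without proof. Your argument is the classical Erd\H{o}s--Lov\'asz inductive proof and is correct, including the handling of the well-definedness of the conditional probabilities and the degenerate case $d=0$.
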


Now we are ready to prove Lemma~\ref{every vertex saves lemma}.
\begin{proof}[Proof of Lemma~\ref{every vertex saves lemma}]
  For each $v\in V(G)$, let $\mathcal A_v$ be the event that \eqref{enough colors equation} does not hold, and let $\mathcal A = \{\mathcal A_v : v\in V(G)\}$.  Note that for each $v\in V(G)$, $\mathcal A_v$ depends only on trials at vertices at distance at most two from $v$, so if $u\in V(G)$ has distance at least five to $v$, then $\mathcal A_u$ and $\mathcal A_v$ do not depend on any of the same trials.  Therefore each $\mathcal A_v$ is mutually independent of a set of all but at most $\Delta^4$ events in $\mathcal A$.

  By Lemma \ref{local lemma}, it suffices to show that for each $v\in V(G)$, $\Prob{\mathcal A_v} \leq \Delta^{-4}/4$.  Let
  \begin{multline*}
    Z_v = 2(\max\{\Expect{\aberrance_{v,\sigma}}^{5/6}, \log^{\logexpless}\Delta\} + \max\{\Expect{\unact_{v,\ordering}}^{5/6}, \log^{\logexpless}\Delta\}\\
    + \max\{\Expect{\pairs_{v,\sigma}}^{5/6}, \log^{\logexpless}\Delta\} - \max\{\Expect{\trips_{v,\sigma}}^{5/6}, \log^{\logexpless}\Delta\}),
  \end{multline*}
  and let $\mathcal A'_v$ be the event that
  \begin{equation*}
    \savings_{v, \sigma, \ordering} \leq \Expect{\savings_{v, \sigma, \ordering}} - Z_v.
  \end{equation*}
  By Lemma~\ref{concentration lemma} and the Union Bound, $\Prob{\mathcal A'_v} < \Delta^{-4}/4$.  We claim that $\mathcal A_v\subseteq \mathcal A'_v$, which completes the proof.  By \eqref{random variables save equation}, it suffices to show
  \begin{equation}\label{expectation with error beats save}
    \Save_L(v) \leq \Expect{\savings_{v, \sigma, \ordering}} - Z_v.
  \end{equation}
  By the assumption that $\Expect{\savings_{v, \sigma, \ordering}} \geq \xi_2\log^{\logexp}\Delta$, $Z_v = o(\Expect{\savings_{v, \sigma, \ordering}}).$  Since $\Delta$ is sufficiently large, we may assume that $Z_v \leq \xi_1\Save_L(v).$  Since $\Expect{\savings_{v, \sigma, \ordering}} \geq (1 + \xi_1)\Save_L(v)$, \eqref{expectation with error beats save} holds, which completes the proof.
\end{proof}

We conclude this section with the proof of Theorem~\ref{metatheorem}.
\begin{proof}[Proof of Theorem~\ref{metatheorem}]
  By Proposition~\ref{extending partial coloring}, it suffices to show that $G[\uncolvtcs]$ is $(\newList, \newmatching)$-colorable with nonzero probability.  Thus it suffices to show that for some instance of $(\phi, \uncolvtcs)$, for each $v\in \uncolvtcs$,
  \begin{equation}\label{can color greedily equation}
    |\newList(v)| - 1 \geq |\{u\in N(v)\cap \uncolvtcs : u\not\ordering v\}|,
  \end{equation}
  because then we can color $G[\uncolvtcs]$ greedily in the ordering provided by $\ordering$, breaking ties arbitrarily.
  By Lemma \ref{every vertex saves lemma}, we may consider the instance in which each $v\in V(G)$ satisfies \eqref{enough colors equation}.
  
  For each $v\in V(G)$, since $U\supseteq V(G)\setminus A$,
  \begin{equation*}
    |\{u\in N(v)\cap \uncolvtcs : u\not\ordering v\}| = d_\uncolvtcs(v) - |\{u\in N(v)\cap \uncolvtcs : u\ordering v\}| |\geq d_\uncolvtcs(v) - \unact_{v,\ordering}.
  \end{equation*}
  Therefore if \eqref{enough colors equation} holds, then \eqref{can color greedily equation} holds.  Thus, $G[\uncolvtcs]$ is $(\newList, \newmatching)$-colorable, as desired.
\end{proof}
 
\section{Structure}\label{structure section}

The main result of this section is Theorem~\ref{structure thm}, which lower bounds the number of non-adjacent egalitarian neighbors of a vertex in terms of the number of its neighbors that are lordlier, subservient, or weakly egalitarian.

First we need to prove Theorem~\ref{density lemma}, which may be of independent interest.  It bounds the number of edges of a critical graph in terms of the size of a matching in the complement.  Recall that a graph $G$ with list-assignment $L$ is \textit{$L$-critical} if $G$ is not $L$-colorable but every proper induced subgraph of $G$ is.  

\begin{thm}\label{density lemma}
If $G$ is $L$-critical, $H$ is an induced subgraph of $G$, and $M$ is a matching in $\overline H$, then

$$|E(\overline H)| \geq |M|(|V(H)| - |M|) - \sum_{u\in V(H)}\Save_L(u).$$
\end{thm}

We will apply Theorem \ref{density lemma} to an appropriate subset of the neighborhood of each vertex.  In order to prove Theorem~\ref{density lemma}, we need an improved version of a classic result of Erd\H os, Rubin, and Taylor \cite{ERT79} about list-coloring a complete graph with a matching removed, proved by Delcourt and Postle~\cite{DP17}.  We include a proof for completeness.

\begin{lemma}[Delcourt and Postle \cite{DP17}]\label{coloring K_n - M}
If $G = K_n - M$, where $M$ is a matching and $L$ is a list-assignment for $G$ such that
\begin{enumerate}
	\item for all $ab\in M$, $|L(a)|, |L(b)|\geq |M|$ and $|L(a)| + |L(b)| \geq n$,
	\item for all $v\in V(G-V(M))$, $|L(v)|\geq n - |M|$,
\end{enumerate}
then $G$ is $L$-colorable.
\end{lemma}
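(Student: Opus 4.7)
The plan is to proceed by induction on $n$; the base case $n \leq 1$ is immediate, since then $|M| = 0$ and the single vertex (if any) has a nonempty list. For the inductive step I would split into two cases, depending on whether some pair in $M$ has overlapping lists.

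In Case 1, suppose $L(a) \cap L(b) \neq \emptyset$ for some pair $ab \in M$. I would pick a color $c \in L(a) \cap L(b)$, color both $a$ and $b$ with $c$, delete them from $G$, and remove $c$ from every other list. The reduced instance is $K_{n-2} - (M \setminus \{ab\})$ with matching of size $|M| - 1$, and a routine bookkeeping check shows the reduced list-assignment $L'$ still satisfies both hypotheses: for each surviving pair $a'b'$, $|L'(a')|, |L'(b')| \geq |M| - 1$ and $|L'(a')| + |L'(b')| \geq n - 2$, and for each unmatched $v$, $|L'(v)| \geq (n - |M|) - 1$. Induction then completes the coloring.

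In Case 2, suppose $L(a) \cap L(b) = \emptyset$ for every pair $ab \in M$. Then no matched pair can share a color anyway, so it suffices to find a \emph{rainbow} $L$-coloring, i.e., an injection $\phi \colon V(G) \to \bigcup_v L(v)$ with $\phi(v) \in L(v)$; any such coloring is automatically a proper $L$-coloring of $G = K_n - M$, since distinct vertices receive distinct colors. By Hall's Theorem, this reduces to verifying that $|\bigcup_{v \in S} L(v)| \geq |S|$ for every $S \subseteq V(G)$. If $S$ contains both endpoints of some pair $ab \in M$, disjointness of lists gives $|\bigcup_{v \in S} L(v)| \geq |L(a)| + |L(b)| \geq n \geq |S|$. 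Otherwise $S$ meets each $M$-pair in at most one vertex, forcing $|S| \leq |M| + (n - 2|M|) = n - |M|$; in that case either $S$ contains an unmatched vertex $v$ with $|L(v)| \geq n - |M| \geq |S|$, or $S$ is entirely matched with $|S| \leq |M|$ and any $v \in S$ gives $|L(v)| \geq |M| \geq |S|$. The main obstacle is assembling this Hall verification so that the three hypotheses $|L(a)| + |L(b)| \geq n$, $|L(v)| \geq |M|$, and $|L(v)| \geq n - |M|$ are each invoked at the correct moment to cover all three structural possibilities for $S$; once that is laid out, Case 1 is a mechanical reduction and the induction closes.
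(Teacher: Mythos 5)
Your proposal is correct and follows essentially the same approach as the paper: induction on $n$, the same reduction when some matched pair shares a color, and the same Hall's Theorem argument (split on whether $S$ contains a full $M$-pair, an unmatched vertex, or is contained in $V(M)$) when all matched pairs have disjoint lists. The bookkeeping you outline for Case~1 also matches the paper's verification.
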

\begin{proof}
We proceed by induction on $n$.  If $n\leq 1$, then $M=\varnothing$ and by 2, $|L(v)|\geq n$ for all $v\in G$.  So we may assume $n\geq 2$.

Suppose there exists $ab\in M$ such that $L(a)\cap L(b)\neq\varnothing$.
Let $c\in L(a)\cap L(b)$, and for all $v\in V(G)\backslash\{a,b\}$, let $L'(v) = L(v)\backslash \{c\}$.
Let $G' = G-a-b$ and $M' = M-ab$.
Then $G',M',$ and $L'$ satisfy conditions 1 and 2.
By induction, $G'$ has an $L'$-coloring.
Therefore $G$ has an $L$-coloring, obtained from an $L'$-coloring $G'$ by coloring $a$ and $b$ with color $c$, as desired.

Therefore we may assume that for all $ab\in M$, $L(a)\cap L(b)=\varnothing$.
Since $|L(a)| + |L(b)| \geq n$, $|L(a)\cup L(b)| \geq n$.
We claim for all $X\subseteq V(G)$, $|\bigcup_{v\in X}L(v)| \geq |X|$.
If there exists $ab\in M$ such that $a,b\in X$, then $|\bigcup_{v\in X}L(v)| \geq n \geq |X|$, as claimed.
Therefore we may assume that $|X| \leq n - |M|$.
If $X\backslash V(M)\neq\varnothing$, then $|\bigcup_{v\in X}L(v)| \geq n - |M| \geq |X|$, as claimed.
Hence, we may assume that $X\subseteq V(M)$.
But then $|\bigcup_{v\in X}L(v)| \geq |M| \geq |X|$, as claimed.  

Therefore $|\bigcup_{v\in X}L(v)| \geq |X|$ for all $X\subseteq V(G)$.  By Hall's Theorem, there is a matching from $V(G)$ to $\cup_v L(v)$, and thus $G$ has an $L$-coloring, as desired.
\end{proof}

Now we prove Theorem \ref{density lemma}.

\begin{proof}[Proof of Theorem \ref{density lemma}]
We proceed by induction on $|V(H)|$.  Since $G$ is $L$-critical, $G-V(H)$ has an $L$-coloring $\phi$.  For all $v\in V(H)$, let $L'(v) = L(v) \backslash \{\phi(u) : u\in N(v)\cap V(G-V(H))\}$.  Since $G$ does not have an $L$-coloring, $H$ does not have an $L'$-coloring.  By Lemma \ref{coloring K_n - M}, either there exists $ab\in M$ such that $|L(a)| < |M|$ or $|L(a)| + |L(b)| < |V(H)|$ or there exists $v\in V(H-V(M))$ such that $|L'(v)| < |V(H)| - |M|$.  Note that for all $v\in V(H)$,
\begin{equation}\label{new list size}
\begin{split}
|L'(v)| &\geq |L(v)| - d_{G-V(H)}(v)\\
 &= d_{H}(v) + 1 - \Save_L(v).
 \end{split}
 \end{equation}

If there exists $ab\in M$ such that $|L'(a)| < |M|$, then let $H' = H-a$ and $M' = M - ab$.  By \eqref{new list size}, $d_{H}(a) + 1 - \Save_L(a) < |M|$.  Hence, 
\begin{align*}
d_{\overline H}(a) &= |V(H)| - 1 - d_{H}(a)\\
&> |V(H)| - |M| - \Save_L(a).
\end{align*}
By induction, $|E(\overline H')| \geq |M'|(|V(H')| - |M'|) - \sum_{u\in V(H')}\Save_L(u)$.  Therefore,
\begin{align*}
|E(\overline H)| &= |E(\overline H')| + d_{\overline H}(a)\\
 &> |M'|(|V(H')| - |M'|) - \sum_{u\in V(H')}\Save_L(u) + |V(H)| - |M| - \Save_L(a)\\
 &= (|M| - 1)(|V(H)| - |M|) + |V(H)| - |M| - \sum_{u\in V(H)}\Save_L(u)\\
 &=|M|(|V(H)| - |M|) - \sum_{u\in V(H)}\Save_L(u),
\end{align*}
as desired.

If there exists $ab\in M$ such that $|L'(a)| + |L'(b)| < |V(H)|$ then let $H'=H - a - b$ and $M' = M - ab$.  By \eqref{new list size}, $d_{H}(a) + 1 - \Save_L(a) + d_{H}(b) + 1 - \Save_L(b) < |V(H)|$.  Hence, 
\begin{align*}
|\delta_{\overline{H}}(\{a,b\})| &= 2(|V(H)| - 2) - d_{H}(a) - d_{H}(b)\\
&> 2(|V(H)| - 2) - |V(H)| + 2 - \Save_L(a) - \Save_L(b)\\
&= |V(H)| - 2 - \Save_L(a) - \Save_L(b),
\end{align*}
where $\delta_{\overline{H}}(\{a, b\})$ is the set of edges in $\overline H$ incident to precisely one of $a$ and $b$.

By induction, $|E(\overline H')| \geq |M'|(|V(H')| - |M'|) - \sum_{u\in V(H')}\Save_L(u)$.  Therefore,
\begin{align*}
|E(\overline H)| &= |E(\overline H')| + \delta_{\overline H}(\{a,b\}) + 1\\
 &\geq |M'|(|V(H')| - |M'|) - \sum_{u\in V(H')}\Save_L(u) + |V(H)| - \Save_L(a) - \Save_L(b)\\
 &= (|M| - 1)(|V(H)| - |M| - 1) + |V(H)| - \sum_{u\in V(H)}\Save_L(u)\\
 &>|M|(|V(H)| - |M|) - \sum_{u\in V(H)}\Save_L(u),
\end{align*}
as desired.

Otherwise, there exists some $v\in V(H-V(M))$ such that $|L'(v)| < |V(H)| - |M|$, so let $H' = H - v$.  By \eqref{new list size}, $d_{H'}(v) + 1 - \Save_L(v) < |V(H)| - |M|$.  Hence,
\begin{align*}
d_{\overline H}(v) &= |V(H)| - 1 - d_{H}(v)\\
&> |M| - \Save_L(v).
\end{align*}
By induction, $|E(\overline H')| \geq |M|(|V(H')| - |M|) - \sum_{u\in V(H')}\Save_L(u)$.  Therefore,
\begin{align*}
|E(\overline H)| &= |E(\overline H')| + d_{\overline H}(v)\\
 &> |M|(|V(H')| - |M|) - \sum_{u\in V(H')}\Save_L(u) + |M| - \Save_L(v)\\
 &= |M|(|V(H)| - |M| - 1) + |M| - \sum_{u\in V(H)}\Save_L(u)\\
 &=|M|(|V(H)| - |M|) - \sum_{u\in V(H)}\Save_L(u),
\end{align*}
as desired.
\end{proof}

Recall that if $v$ is a vertex of a graph $G$ and $u\in N(v)$, we say $u$ is a subservient neighbor of $v$ if $|L(u)| < |L(v)|$, a strongly egalitarian neighbor of $v$ if $|L(u)|\in [|L(v)|, |L(v)| + \beta\Gap(v))$, a weakly egalitarian neighbor of $v$ if $|L(u)| \in [|L(v)| + \beta\Gap(v), (1 + \alpha)|L(v)|)$, and a lordlier neighbor of $v$ if $|L(u)| \geq (1 + \alpha)|L(v)|$.  Recall also that this partitions the neighbors of $v$ into the sets $\subserv(v), \strongEgal(v), \weakEgal(v),$ and $\aberrant(v)$, the sets of subservient, strongly egalitarian, weakly egalitarian, and lordlier neighbors of $v$, respectively, and that we let $\notEgal(v) = N(v) - \egal(v)$.

The following is the main result of this section.
\begin{thm}\label{structure thm}
  Let $\varepsilon\in (0, 1)$.  If $G$ is an $L$-critical graph for some list-assignment $L$ such that for every $v\in V(G)$, $|L(v)|\geq \varepsilon\omega(v) + (1 - \varepsilon)(d(v) + 1)$, then for all $v\in V(G)$,
  \begin{multline*}
  |E(\overline{G[\egal(v)]})| \geq \left(\frac{1}{4} - \frac{\varepsilon(4 + \beta + 2\alpha)}{2(1 - \varepsilon)}\right)\Gap(v)d(v) - \left(\frac{1}{2} - \frac{\varepsilon(1 + \beta)}{2(1 - \varepsilon)}\right)d(v)|\notEgal(v)| \\
  - \left(\frac{1}{4} - \frac{\varepsilon(2 + \beta)}{2(1 - \varepsilon)}\right)\Gap(v)|\weakEgal(v)|.
\end{multline*}
 \end{thm}

For the remainder of this section, we assume that $G$ is a graph with list-assignment $L$ satisfying the conditions of Theorem~\ref{structure thm}.

Theorem~\ref{structure thm} is useful because it implies that if $v$ does not have many lordlier or subservient neighbors, or many weakly egalitarian neighbors, then it has many non-adjacent egalitarian neighbors.  We prove Theorem~\ref{structure thm} by considering a maximum antimatching $M$ among $v$'s egalitarian neighbors and applying Theorem~\ref{density lemma} with $H = G[V(M)\cup \strongEgal(v)]$.  If $u$ is a strongly egalitarian neighbor of $v$, then $\Save_L(u)$ is close to $\Save_L(v)$.  If $u$ is a weakly egalitarian neighbor of $v$, then we can not bound $\Save_L(u)$ well enough, so we do not include $u$ in $H$ unless $u$ is in the antimatching.

We will use the following propositions to prove Theorem \ref{structure thm}.  First, we need to bound the size of a maximum antimatching taken among the egalitarian neighbors, as in the following proposition.

\begin{prop}\label{matching bound} 
If $M$ is a maximum matching in $\overline{G[\egal(v)]}$, then
\begin{equation*}
\frac{\Gap(v) - |\notEgal(v)|}{2} \leq |M| \leq \Gap(v).
\end{equation*}
\end{prop}
\begin{proof}
Since $M$ is maximum, $G[\egal(v) - V(M)]$ is a clique, so $2|M| \geq |\egal(v)| - \omega(G[\egal(v)]) \geq |\egal(v)| - \omega(v) = \Gap(v) - |\notEgal(v)|$, as desired.

Since no clique in $G[\egal(v)]$ contains an edge in $M$, $\omega(G[\egal(v)]) \leq |\egal(v)| - |M|$.  Note that for any $H\subseteq G[N(v)\cup\{v\}]$, $|V(H)| - \omega(H) \leq \Gap(v)$.  Hence, $|M| \leq \Gap(v)$, as desired.
\end{proof}

\begin{prop}\label{gap bound for egal}
If $u$ is an egalitarian neighbor of a vertex $v$ (i.e.\ $u\in \egal(v)$), then
\begin{equation*}
\Gap(u) \leq \frac{1 + \alpha}{1 - \varepsilon}d(v).
\end{equation*}
\end{prop}
\begin{proof}
Since $G$ is $L$-critical, $|L(v)| \leq d(v)$.  Since $u\in \egal(v)$, $|L(u)| \leq (1 + \alpha)|L(v)|$.  Hence, $|L(u)| \leq (1 + \alpha)d(v)$.  Since $|L(u)| \geq (1 - \varepsilon)d(u)$, $d(u) \leq \frac{1 + \alpha}{1 - \varepsilon}d(v)$.  Since $\Gap(u) \leq d(u)$, the result follows.
\end{proof}

Since we will apply Theorem~\ref{density lemma}, we will need to upper bound $\Save_L(u)$ for egalitarian neighbors $u$ of $v$.  Since $\Save_L(u) \leq \varepsilon\Gap(u)$, it suffices to upper bound $\Gap(u)$.
Proposition~\ref{gap bound for egal} provides a rough bound on $\Gap(u)$ that we will use for the egalitarian neighbors in the antimatching.  The next proposition provides an improved bound on $\Gap(u)$ if $u$ is a strongly egalitarian neighbor that is not in the antimatching.

\begin{prop}\label{gap bound for C}
If $M$ is a maximum matching in $\overline{G[\egal(v)]}$ and $u\in \strongEgal(v) - V(M)$, then
\begin{equation*}
\Gap(u) \leq \frac{(2 + \beta)\Gap(v) + |\notEgal(v)|}{1 - \varepsilon}.
\end{equation*}
\end{prop}
\begin{proof}
  Since $M$ is maximum, $G[\egal(v) - V(M)]$ is a clique, so
  \begin{equation}
    \label{omega bound for u in C}
    \omega(u) \geq |\egal(v)| - 2|M|.
  \end{equation}
  Since $u\in \strongEgal(v)$, $|L(u)| \leq |L(v)| + \beta \Gap(v)$.  Since $G$ is $L$-critical, $|L(v)| \leq d(v)$.  Hence, $|L(u)| \leq d(v) + \beta\Gap(v)$.  Since $d(u) \leq \frac{|L(u)| - \varepsilon\omega(u)}{1 - \varepsilon}$, 
  \begin{equation}
    \label{degree bound for u in C}
    d(u) \leq \frac{d(v) + \beta\Gap(v) - \varepsilon\omega(u)}{1 - \varepsilon}.
  \end{equation}
  Now the result follows from \eqref{omega bound for u in C}, \eqref{degree bound for u in C}, and Proposition~\ref{matching bound}.
\end{proof}

Now we are ready to prove Theorem \ref{structure thm}.
\begin{proof}[Proof of Theorem \ref{structure thm}]
Let $M$ be a maximum matching in $\overline{G[\egal(v)])}$, and let $\weakEgal'(v) = \weakEgal(v) - V(M)$.  
Let $H = G[V(M)\cup \strongEgal(v)])$. 
By Theorem \ref{density lemma},
\begin{equation}\label{applying density lemma}
|E(\overline{H})| \geq |M|(|V(H)| - |M|) - \sum_{u\in V(H)} \Save(u).
\end{equation}
 By Proposition \ref{gap bound for C}, 
 \begin{multline}
   \label{savings not in matching}
  \sum_{u\in V(H-V(M))}\Save(u) \leq \sum_{u\in V(H-V(M))}\varepsilon\Gap(u)\\
  \leq (|V(H)|-|M|)\left(\frac{\varepsilon}{1 - \varepsilon}\right)((2 + \beta)\Gap(v) + |\notEgal(v)|).
\end{multline}
By Proposition \ref{gap bound for egal} and \ref{matching bound}.
\begin{equation}
  \label{savings in matching}
  \sum_{u\in V(M)}\Save(u) \leq \frac{\varepsilon(1 + \alpha)}{1 - \varepsilon}d(v)|M| \leq \frac{\varepsilon(1 + \alpha)\Gap(v)d(v)}{1 - \varepsilon}.
\end{equation}

By \eqref{applying density lemma}, \eqref{savings not in matching}, and \eqref{savings in matching},
\begin{equation}
  \label{savings with V - M}
  |E(\overline H)| \geq (|V(H)| - |M|)\left(|M| - \frac{\varepsilon((2 + \beta)\Gap(v) + |\notEgal(v)|)}{1 - \varepsilon}\right) - \frac{\varepsilon(1 + \alpha)\Gap(v)d(v)}{1 - \varepsilon}.
\end{equation}
Note that $|M| \leq |V(H)|/2$, so $|V(H)| - |M| \geq |V(H)|/2$.  Therefore by Proposition \ref{matching bound} and \eqref{savings with V - M},
\begin{multline}
  \label{savings with V - M rearranged}
  |E(\overline H)| \geq \left(\frac{|V(H)|}{2}\right)\left(\Gap(v)\left(\frac{1}{2} - \frac{\varepsilon(2 + \beta)}{1 - \varepsilon}\right) - |\notEgal(v)|\left(\frac{1}{2} + \frac{\varepsilon}{1 - \varepsilon}\right)\right)\\
     - \frac{\varepsilon}{1 - \varepsilon}(1 + \alpha)\Gap(v)d(v).
\end{multline}

Since $|V(H)| = d(v) - |\notEgal(v)| - |\weakEgal'(v)|$, by combining terms in~\eqref{savings with V - M rearranged} and ignoring some positive terms, we have that
\begin{multline*}
  |E(\overline H)| \geq \Gap(v)d(v)\left(\frac{1}{4} - \frac{\varepsilon(4 + \beta + 2\alpha)}{2(1 - \varepsilon)}\right) - d(v)|\notEgal(v)|\left(\frac{1}{4} + \frac{\varepsilon}{2(1 - \varepsilon)}\right)\\
  - \Gap(v)|\notEgal(v)|\left(\frac{1}{4} - \frac{\varepsilon(2 + \beta)}{2(1 - \varepsilon)}\right) - \Gap(v)|\weakEgal'(v)|\left(\frac{1}{4} - \frac{\varepsilon(2 + \beta)}{2(1 - \varepsilon)}\right).
\end{multline*}
Since $\Gap(v)\leq d(v)$, $|\weakEgal'(v)| \leq |\weakEgal(v)|$, and $|E(\overline{G[\egal(v)]})| \geq |E(\overline H)|$,
\begin{multline*}
  |E(\overline{G[\egal(v)]})| \geq \left(\frac{1}{4} - \frac{\varepsilon(4 + \beta + 2\alpha)}{2(1 - \varepsilon)}\right)\Gap(v)d(v) - \left(\frac{1}{2} - \frac{\varepsilon(1 + \beta)}{2(1 - \varepsilon)}\right)d(v)|\notEgal(v)| \\
  - \left(\frac{1}{4} - \frac{\varepsilon(2 + \beta)}{2(1 - \varepsilon)}\right)\Gap(v)|\weakEgal(v)|,
\end{multline*}
as desired.                  
\end{proof}

Note that we could take a maximum antimatching among the strongly egalitarian neighbors of a vertex $v$ and follow the same proof strategy of Theorem~\ref{structure thm} to obtain a bound of
\begin{equation*}
  |E(\overline{G[\strongEgal(v)]})| \geq \Omega(\Gap(v) d(v)) - O(d(v))|N(v) - \strongEgal(v)|.
\end{equation*}
However, this is not a good enough bound, because if there are $\Omega(\Gap(v))$ weakly egalitarian neighbors of $v$, we do not have enough non-adjacent strongly egalitarian neighbors to expect many colors assigned to multiple neighbors of $v$, and we do not expect enough weakly egalitarian neighbors to receive a color not in $L(v)$.
 
\section{Proof of Theorem~\ref{main thm}}\label{main proof section}

In this section, we prove Theorem~\ref{main thm}.
In order for the proof of Theorem \ref{main thm} to work inductively, we actually prove the following.
\begin{thm}\label{inductive thm}
Let $\varepsilon = \frac{1}{330}$. There exists $\Delta_0$ such that for all $\Delta \geq \Delta_0$, if $G$ is a graph of maximum degree at most $\Delta$ with a list assignment $L$ such that for all $v\in V(G)$,
\begin{enumerate}
	\item $|L(v)| \geq \omega(v) + \log^{\logexp}(\Delta)$ and
	\item $|L(v)| \geq \epsilon\omega(v) + (1 - \epsilon)(d(v) + 1)$,
\end{enumerate} 
then $G$ is $L$-colorable.
\end{thm}

Note that Theorem~\ref{main thm} follows immediately from Theorem~\ref{inductive thm}.  If $G$ is a graph with list-assignment $L$ that satisfies the conditions of Theorem~\ref{inductive thm}, then so does any subgraph of $G$.  Therefore in proving Theorem~\ref{inductive thm}, we may assume $G$ is $L$-critical, and hence we can apply Theorem~\ref{structure thm}.

For the remainder of this section, unless specified otherwise, $G, L, \varepsilon, \rho$, and $\Delta$ are assumed to satisfy the conditions of Theorem \ref{inductive thm}, and we assume that $G$ is $L$-critical.  For each edge $e\in E(G)$, we let $M_e$ be a matching of $\{u\}\times L(u)$ and $\{v\}\times L(v)$ such that $(L, M)$ is a total correspondence assignment for $G$ where every $(L, M)$-coloring of $G$ is an $L$-coloring.  Let $(\phi, \uncolvtcs)$ be a random naive partial coloring and $A$ a set of activated vertices sampled using the local naive random coloring procedure with activation probability $\rho$ and $\varepsilon$-equalizing coin flips.  Note that we are assuming $G$ is $L$-critical before assuming the correspondence assignment is total, since Theorem~\ref{structure thm} does not hold for correspondence coloring.  Recall that we let $K_{\varepsilon, \rho} = .999e^{\frac{-\rho}{1 - \varepsilon}}$.  For convenience, let $K = K_{\varepsilon, \rho}$.  For $u,v\in V(G)$, let $u\ordering v$ if $|L(u)| < |L(v)|$.

Before proving Theorem~\ref{main thm}, we need to lower bound the expected savings for each vertex, as in the following lemmas.
\begin{lemma}\label{lordly save}
  For each $v\in V(G)$,
  \begin{equation*}
    \Expect{\unact_{v,\ordering}} = (1 - \rho)|\subserv(v)|.
  \end{equation*}
\end{lemma}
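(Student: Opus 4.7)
The plan is to apply linearity of expectation directly; this statement does not require any structural or probabilistic machinery beyond the definition of the coloring procedure. First I would write $\unact_{v,\ordering} = \sum_{u \in N(v)} \mathbf{1}[u \notin A \text{ and } u \ordering v]$, an expansion as a sum of indicator random variables over the neighbors of $v$.

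Next, I would observe that under the ordering chosen for the application of Theorem~\ref{metatheorem} in this section, namely $u \ordering v$ iff $|L(u)| < |L(v)|$, the set $\{u \in N(v) : u \ordering v\}$ coincides, by definition, with $\subserv(v)$. Only these vertices contribute to the sum. By Step~1 of Definition~\ref{local-naive-coloring-def}, each vertex lies in $A$ independently with probability $\rho$, so $\Prob{u \notin A} = 1 - \rho$ for every $u$, with no dependence on the colors $\phi(\cdot)$ or on any equalizing coin flips. Summing the $|\subserv(v)|$ indicator expectations yields $(1 - \rho)|\subserv(v)|$, as claimed.

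There is no real obstacle here: the lemma is a bookkeeping step that makes explicit how the ordering and the activation step interact. Its role is to prepare the way for the more substantive bounds on $\Expect{\aberrance_{v,\sigma}}$, $\Expect{\pairs_{v,\sigma}}$, and $\Expect{\trips_{v,\sigma}}$ (Lemmas~\ref{aberrant save} and~\ref{sparse save}), which, combined with this identity, will give the lower bound on $\Expect{\savings_{v,\sigma,\ordering}}$ required as input to Theorem~\ref{metatheorem}.
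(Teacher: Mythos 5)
Your proposal is correct and follows the same route as the paper's one-line proof: linearity of expectation over the indicator variables $\mathbf{1}[u\notin A]$ for $u\in\subserv(v)$, using that each vertex is activated independently with probability $\rho$. You spell out the identification of $\{u\in N(v):u\ordering v\}$ with $\subserv(v)$ under the ordering chosen in Section~\ref{main proof section}, which the paper leaves implicit, but the substance is identical.
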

\begin{proof}
  Since each neighbor of $v$ is in $A$ with probability $\rho$, the lemma follows by linearity of expectation.
\end{proof}

\begin{lemma}\label{aberrant save}
  For each $v\in V(G)$,
  \begin{equation*}
    \Expect{\aberrance_{v,0}} \geq K\left(\frac{\alpha}{1 + \alpha}|\aberrant(v)| + \frac{\beta\Gap(v)}{d(v) + \beta\Gap(v)}|\weakEgal(v)|\right).
  \end{equation*}
\end{lemma}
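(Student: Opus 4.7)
The plan is to express $\Expect{\aberrance_{v,0}}$ as a sum over $u \in \egal_0(v)$ and use that $\aberrant(v) \cup \weakEgal(v) \subseteq \egal_0(v)$, which holds because both lordlier and weakly egalitarian neighbors satisfy $|L(u)| \geq |L(v)|$. For each such $u$ I would compute the probability that $u$ remains colored and that $\phi(u)$ is not matched by $M_{uv}$ to any color in $L(v)$, and then bound this probability separately for $u \in \aberrant(v)$ and $u \in \weakEgal(v)$.

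The first step is a single-vertex probability calculation. Since $(L,M)$ is total and $|L(u)| \geq |L(v)|$, the matching $M_{uv}$ saturates $\{v\}\times L(v)$, so exactly $|L(u)| - |L(v)|$ colors of $L(u)$ lie outside $V(M_{uv})$. Because $\phi(u)$ is chosen uniformly from $L(u)$ and Proposition~\ref{equalized keep probability} gives $\ProbCond{u\notin \uncolvtcs}{\phi(u) = c} = K$ for every $c \in L(u)$, I would conclude
\begin{equation*}
\Prob{u\notin\uncolvtcs \text{ and } \phi(u) \notin V(M_{uv})} \;=\; K \cdot \frac{|L(u)| - |L(v)|}{|L(u)|}.
\end{equation*}

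The second step is to lower-bound the ratio $(|L(u)| - |L(v)|)/|L(u)|$ in each regime. For $u \in \aberrant(v)$, the definition gives $|L(u)| \geq (1+\alpha)|L(v)|$, so this ratio is at least $\alpha/(1+\alpha)$. For $u \in \weakEgal(v)$, the function $x \mapsto (x - |L(v)|)/x = 1 - |L(v)|/x$ is increasing in $x$, so its minimum over $|L(u)| \geq |L(v)| + \beta\Gap(v)$ is attained at $|L(u)| = |L(v)| + \beta\Gap(v)$, yielding the bound $\beta\Gap(v)/(|L(v)| + \beta\Gap(v))$; using $|L(v)| \leq d(v)$ (from $L$-criticality) then gives the slightly weaker bound $\beta\Gap(v)/(d(v) + \beta\Gap(v))$.

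Summing these two lower bounds over $u \in \aberrant(v) \cup \weakEgal(v)$ by linearity of expectation yields the claimed inequality. There is no substantial obstacle here: all of the work is absorbed by Proposition~\ref{equalized keep probability} and the totality of $(L,M)$, and the only point requiring a little care is verifying that the conditional-probability step works cleanly, which it does thanks to the $\varepsilon$-equalizing coin flips being conducted separately for each vertex-color pair.
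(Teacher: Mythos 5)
Your proposal is correct and takes essentially the same approach as the paper: the paper likewise factors out the constant keep-probability $K$ via Proposition~\ref{equalized keep probability} (through an auxiliary ``total aberrance'' random variable), bounds $\Prob{\phi(u)\notin V(M_{uv})}$ by $\alpha/(1+\alpha)$ for lordlier neighbors and by $\beta\Gap(v)/(d(v)+\beta\Gap(v))$ for weakly egalitarian neighbors, and sums by linearity of expectation. The only cosmetic difference is that you carry out the per-vertex computation directly and explicitly invoke totality of $(L,M)$ to get an exact count of unmatched colors, whereas the paper only needs the one-sided bound $|M_{uv}|\leq|L(v)|$ and so never states the equality.
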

\begin{proof}
  Let
  \begin{equation*}
    \totesabs_v = |\{u \in \egal(v) : \phi(u)\notin V(M_{uv})\}|,
  \end{equation*}
  and note that $\Expect{\aberrance_{v,0}} = K\cdot\Expect{\totesabs_v}$ by Proposition~\ref{equalized keep probability}.
  For each $u\in \aberrant(v)$,
  \begin{equation*}
    \Prob{\phi(u)\notin V(M_{uv})} \geq \frac{\alpha}{1 + \alpha},
  \end{equation*}
  and for each $u\in \weakEgal(v)$,
  \begin{equation*}
    \Prob{\phi(u)\notin V(M_{uv})} \geq \frac{\beta\Gap(v)}{|L(v)| + \beta\Gap(v)} \geq \frac{\beta\Gap(v)}{d(v) + \beta\Gap(v)}.
  \end{equation*}
  Therefore it follows that
  \begin{equation*}
    \Expect{\totesabs_v} \geq \frac{\alpha}{1 + \alpha}|\aberrant(v)| + \frac{\beta\Gap(v)}{d(v) + \beta\Gap(v)}|\weakEgal(v)|.
  \end{equation*}
  Since $\Expect{\aberrance_{v,0}} = K\cdot\Expect{\totesabs_v}$, the result follows.
\end{proof}

Recall that we apply Theorem~\ref{metatheorem} with $\sigma = 0$.  Thus we need to bound $\pairs_{v, 0} - \trips_{v, 0}$, as in the following lemma.
\begin{lemma}\label{sparse save}
  For each $v\in V(G)$,
  \begin{equation*}
    \Expect{\pairs_{v,0} - \trips_{v,0}} \geq \min_{i\in\{1,2\}}\left(\frac{K\cdot e_i}{|L(v)|}\right)\left(\frac{K}{(1 + \alpha)^2} - \frac{\sqrt{2\cdot e_i}}{3|L(v)|}\right),
  \end{equation*}
  where $e_1 = |E(\overline{G[\egal(v)]})|$ and $e_2 = \binom{d(v)}{2}$.
\end{lemma}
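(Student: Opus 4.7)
The plan is to evaluate $\Expect{\pairs_{v,0}}$ and $\Expect{\trips_{v,0}}$ separately by linearity of expectation. A key simplification comes from the per-(vertex, color) $\varepsilon$-equalizing coin flips: for each $u \in V(G)$ and $c \in L(u)$, $\ProbCond{u \notin \uncolvtcs}{\phi(u) = c} = K$, and these survival events are jointly independent across distinct vertices given the coloring (by the construction in Definition~\ref{random coloring procedure}), so the joint probabilities in the sums below factor cleanly.

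For $\Expect{\pairs_{v,0}}$, I would fix an unordered pair $\{x,y\} \subseteq \egal_0(v)$ (since $\sigma = 0$, this is the set of neighbors $u$ with $|L(u)| \geq |L(v)|$) and a color $c \in L(v)$; totality of $(L, M)$ produces unique matched colors $c_x \in L(x)$ and $c_y \in L(y)$. The joint probability ``$\phi(x) = c_x$, $\phi(y) = c_y$, and $x, y \notin \uncolvtcs$'' then equals $\frac{K^2}{|L(x)||L(y)|}$ unless $xy \in E(G)$ and $c_xc_y \in M_{xy}$, in which case it is $0$ (since the coloring itself conflicts). Restricting to pairs with $x, y \in \egal(v)$ (so $|L(x)|, |L(y)| < (1+\alpha)|L(v)|$) and $xy \notin E(G)$, each such pair contributes at least $\sum_c \frac{K^2}{|L(x)||L(y)|} \geq \frac{K^2}{(1+\alpha)^2 |L(v)|}$, yielding $\Expect{\pairs_{v,0}} \geq \frac{K^2 e_1}{(1+\alpha)^2 |L(v)|}$; an analogous sum over all pairs of $\egal(v)$ would address the $e_2$-parametrized version.

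For $\Expect{\trips_{v,0}}$, each triple $\{x,y,z\}$ and color $c$ contribute at most $\frac{K^3}{|L(x)||L(y)||L(z)|} \leq \frac{K^3}{|L(v)|^3}$ (using $|L(u)| \geq |L(v)|$ for $u \in \egal_0(v)$); summing over the $|L(v)|$ colors gives at most $\frac{K^3}{|L(v)|^2}$ per triple. For $i = 2$, I would count all triples in $N(v)$, so $\binom{|\egal_0(v)|}{3} \leq \binom{d(v)}{3} = \binom{d(v)}{2}\cdot \frac{d(v)-2}{3} \leq \frac{e_2\sqrt{2e_2}}{3}$ using $d(v) - 2 \leq \sqrt{2e_2}$ (which follows from $d(v)(d(v)-1) = 2e_2$). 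For $i = 1$, I would restrict attention to triples forming triangles in $\overline{G[\egal(v)]}$ and apply the Kruskal--Katona inequality to bound the count by $\frac{e_1\sqrt{2e_1}}{3}$. Subtracting the triple upper bound from the pair lower bound and using $K^3 \leq K$ to simplify, we obtain, for each $i \in \{1,2\}$, the bound
\[
\Expect{\pairs_{v,0} - \trips_{v,0}} \geq \frac{K^2 e_i}{(1+\alpha)^2 |L(v)|} - \frac{K e_i\sqrt{2e_i}}{3|L(v)|^2} = \frac{K e_i}{|L(v)|}\left(\frac{K}{(1+\alpha)^2} - \frac{\sqrt{2e_i}}{3|L(v)|}\right),
\]
and the lemma follows by taking the minimum over $i$.

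The main obstacle will be the $i = 1$ case: the pair lower bound uses only nonadjacent pairs in $\egal(v)$, but the triple upper bound must in principle include triples with one or more internal edges in $G[\egal(v)]$, since such triples can still contribute positively when their matchings admit ``free'' colors. Justifying the restriction to triangles of $\overline{G[\egal(v)]}$ --- essentially, that a triple with an edge $xy \in E(G[\egal(v)])$ already has its $\{x,y\}$-pair contribution blocked by the $M_{xy}$-constraint for all but few colors, so the lost triples are negligible relative to the pair savings --- is the combinatorial subtlety to be worked out, while the $i = 2$ case follows immediately from the cruder $\binom{d(v)}{3}$ bound on triples.
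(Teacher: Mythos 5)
Your instinct that the $i=1$ case has a gap is correct, and it is the central obstacle, but your proposal also has a second gap in the $i=2$ case that you do not flag. For $i=2$, your final inequality requires $\Expect{\pairs_{v,0}} \geq \tfrac{K^2 e_2}{(1+\alpha)^2|L(v)|}$ with $e_2 = \binom{d(v)}{2}$, but ``an analogous sum over all pairs of $\egal(v)$'' produces only $\binom{|\egal(v)|}{2}$ terms, which is \emph{at most} $e_2$, and of those some pairs are adjacent with $M$-conflicting corresponding colors and contribute nothing. So neither the $i=1$ triple bound nor the $i=2$ pair bound follows from the decoupled strategy you outline.

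The underlying reason your decoupling cannot work is that pairs and triples are not independently controlled by $e_1$ and $e_2$; for a fixed color $c$, they are both controlled by the same intermediate quantity. The paper's proof introduces, for each $c\in L(v)$, the spanning subgraph $H_c$ of $G[\egal(v)]$ on vertex set $\egal(v)$, with $xy\in E(H_c)$ iff $xy\in E(G)$ and the two colors $c_x, c_y$ matched to $c$ via $M_{vx}, M_{vy}$ satisfy $c_x c_y\in M_{xy}$. Then the contributing pairs for color $c$ are exactly $E(\overline{H_c})$ and the contributing triples are exactly $T(\overline{H_c})$, which by Rivin's bound is at most $(2|E(\overline{H_c})|)^{3/2}/6$. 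This gives a per-color lower bound of the form $g(|E(\overline{H_c})|)$ where $g(x) = \bigl(\tfrac{Kx}{|L(v)|^2}\bigr)\bigl(\tfrac{K}{(1+\alpha)^2} - \tfrac{\sqrt{2x}}{3|L(v)|}\bigr)$. Since $E(H_c) \subseteq E(G[\egal(v)])$ and $V(H_c)=\egal(v)$ by totality, the single quantity $|E(\overline{H_c})|$ always lies in the interval $[e_1, e_2]$, and $g$ is unimodal (increasing then decreasing), so $g(|E(\overline{H_c})|) \geq \min\{g(e_1), g(e_2)\}$ for every $c$. Summing over the $|L(v)|$ colors gives the claimed bound. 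The $\min_{i\in\{1,2\}}$ in the lemma is thus an artifact of evaluating a unimodal function at the endpoints of the interval in which $|E(\overline{H_c})|$ lives, not two independent estimates --- which is why your attempt to prove each endpoint bound separately cannot succeed: the $e_2$ endpoint corresponds to the worst case in which $\overline{H_c}$ is complete, which maximizes triples but is exactly the case where one also has many pairs, a cancellation you lose by decoupling.

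Two smaller remarks. First, your claimed equality $\ProbCond{x,y\notin\uncolvtcs}{\phi(x)=c_x,\phi(y)=c_y} = \tfrac{K^2}{\text{(normalization)}}$ is stronger than what the paper uses; the paper only asserts the inequality $\geq K^2$ for pairs in $E(\overline{H_c})$, which is what is needed. Second, your appeal to Kruskal--Katona is a workable substitute for Rivin's $|T(H)| \leq (2|E(H)|)^{3/2}/6$, but the issue is which graph you apply it to, not which bound you use --- it must be $\overline{H_c}$, not $\overline{G[\egal(v)]}$.
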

\begin{proof}
  We let $T(H)$ denote the set of triangles in a graph $H$.
  We define the following random variables for each $c\in L(v)$:
  \begin{align*}
    &\totespairs_{v,c} = |\{x, y \in \egal(v): \phi(x)c\in M_{xv}\text{ and }\phi(y)c\in M_{yv}\}|, \text{\ and}\\
    &\totestrips_{v,c} = |\{x, y, z\in \egal(v): \phi(x)c\in M_{xv}, \phi(y)c\in M_{yv},\text{ and }\phi(z)c\in M_{zv}\}|,
  \end{align*}
  and we define $\totespairs_v = \sum_{c\in L(v)}\totespairs_{v,c}$ and $\totestrips_v = \sum_{c\in L(v)}\totestrips_{v,c}$.

  For each $c\in L(v)$, let $H_c$ be the subgraph of $G[\egal(v)]$ defined as follows.  A vertex $x\in \egal(v)$ is in $V(H_c)$ if there exists a color $c_x\in L(x)$ such that $cc_x \in M_{vx}$, and $xy \in E(H_c)$ if $xy\in E(G)$ and moreover $c_xc_y \in M_{xy}$, where $cc_x \in M_{vx}$ and $cc_y \in M_{vy}$.  For any pair $xy \in E(\overline H_c)$, we have $\ProbCond{x, y \notin U}{\phi(x) = c_x,~\phi(y) = c_y} \geq K^2$.  Moreover, for any triple $xyz\in T(\overline H_c)$, we have $\ProbCond{x, y, z\notin U}{\phi(w) = c_w~\forall~w\in\{x,y,z\}} \leq \Prob {x \notin U} \leq K$.  Hence,
  \begin{equation}\label{pairs-trips-lower-bound-by-total}
    \Expect{\pairs_{v,0} - \trips_{v,0}} \geq K^2\Expect{\totespairs_v} - K\Expect{\totestrips_v}.
  \end{equation}

  In 2002, Rivin \cite{R02} proved that
  \begin{equation}
    \label{triangle-bound}
    |T(H)| \leq \frac{(2|E(H)|)^{\frac{3}{2}}}{6}.
  \end{equation}
    For every $c\in L(v)$, we have $V(H_c) = \egal(v)$ since $(L, M)$ is total, so
  \begin{equation*}
    \Expect{\totespairs_{v,c}}  = \sum_{xy\in E(\overline {H_c})}\frac{1}{|L(x)||L(y)|}.
  \end{equation*}
  By the definition of $\egal(v)$, if $x,y\in \egal(v)$,
  \begin{equation*}
    \frac{1}{|L(x)||L(y)|} \geq \frac{1}{(1 + \alpha)^2|L(v)|^2}. %\geq \frac{1}{(1 + \alpha)^2d(v)^2}.
  \end{equation*}
  Therefore
  \begin{equation}
    \label{total pairs expectation}
    \Expect{\totespairs_{v,c}} \geq \frac{|E(\overline{H_c})|}{(1 + \alpha)^2|L(v)|^2}.
  \end{equation}
  Similarly,
  \begin{equation*}
    \Expect{\totestrips_{v,c}} = \sum_{xyz\in T(\overline{H_c})}\frac{1}{|L(x)||L(y)||L(z)|},
  \end{equation*}
  and
  \begin{equation*}
    \frac{1}{|L(x)||L(y)||L(z)|} \leq \frac{1}{|L(v)|^3}. %\leq \frac{1}{(1 - \varepsilon)^3d(v)^3}.
  \end{equation*}
  Therefore
  \begin{equation}
    \label{trips expectation with triangles}
    \Expect{\totestrips_{v,c}} \leq \frac{|T(\overline{H_c})|}{|L(v)|^3}.
  \end{equation}
  By \eqref{triangle-bound} and \eqref{trips expectation with triangles},
  \begin{equation}
    \label{total trips expectation}
    \Expect{\totestrips_{v,c}} \leq \frac{\sqrt{8}|E(\overline{H_c})|^{3/2}}{6|L(v)|^3}.
  \end{equation}
  It follows from \eqref{pairs-trips-lower-bound-by-total}, \eqref{total pairs expectation}, and \eqref{total trips expectation} that
  \begin{equation}\label{pairs-triples-bound-per-color}
    \Expect{\pairs_v - \trips_v} \geq \sum_{c\in L(v)}\left(\frac{K|E(\overline{H_c})|}{|L(v)|^2}\right)\left(\frac{K}{(1 + \alpha)^2} - \frac{\sqrt{2|E(\overline{H_c})|}}{3|L(v)|}\right).
  \end{equation}

  Since $H_c \subseteq G[\egal(v)]$ for every $c\in L(v)$, we have $|E(\overline{G[\egal(v)]})| \leq |E(\overline{H_c})| \leq \binom{d(v)}{2}$.
  For any constants $a$ and $b$, the function $(a - b\sqrt{x})x$ is increasing for $0 \leq x < (2a/(3b))^2$ and decreasing for $x > (2a/(3b))^2$.  Letting $a = K / (1 + \alpha)^2$ and $b = \sqrt{2}/(3|L(v)|)$, this fact implies that each term in the sum in the right side of~\eqref{pairs-triples-bound-per-color} is at least as large as the minimum of two values: the value of the term when $|E(\overline{H_c})|$ is either $|E(\overline{G[\egal(v)]})|$ or simply $\binom{d(v)}{2}$.  Since there are $|L(v)|$ terms in the sum, the result follows.
\end{proof}

Combining Theorem~\ref{structure thm} with Lemmas~\ref{lordly save}, \ref{aberrant save}, and \ref{sparse save}, we prove that the expected savings for each vertex $v$ is larger than $\varepsilon\Gap(v)$, as follows.
\begin{lemma}\label{expected savings beats gap}
  Let $\alpha = \beta = \frac{1}{50}$ and $\rho = 1 - e^{-1}\alpha / (1 + \alpha)$.  For each vertex $v\in V(G)$,
  \begin{equation*}
    \Expect{\savings_{v, 0, \ordering}} \geq 1.01\varepsilon\Gap(v).
  \end{equation*}
\end{lemma}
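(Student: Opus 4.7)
The plan is to combine the three savings bounds from Lemmas~\ref{lordly save}, \ref{aberrant save}, and~\ref{sparse save} with the structural inequality of Theorem~\ref{structure thm}, then split into two cases depending on whether the sparse save alone suffices. Write $s = |\subserv(v)|$, $a = |\aberrant(v)|$, $w = |\weakEgal(v)|$, $g_v = \Gap(v)$, $e_1 = |E(\overline{G[\egal(v)]})|$, $e_2 = \binom{d(v)}{2}$, and define the unimodal function $g(x) = (Kx/|L(v)|)(K/(1+\alpha)^2 - \sqrt{2x}/(3|L(v)|))$. Substituting the chosen values of $\alpha, \beta, \rho$ and invoking the three lemmas yields
\[
  \Expect{\savings_{v,0,\ordering}} \geq (1-\rho)s + \frac{K\alpha}{1+\alpha}a + \frac{K\beta g_v}{d(v)+\beta g_v} w + \min(g(e_1),g(e_2)).
\]

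First I would do a direct numerical check using $(1-\varepsilon)d(v) \leq |L(v)| \leq d(v)$ and $g_v \leq d(v)$ to verify that $g(e_2) \geq 1.01\varepsilon g_v$ always; this reduces the issue to lower-bounding $g(e_1)$. I would then compute a threshold $T_0 \leq 0.028$ such that $e_1 \geq T_0 g_v d(v)$ forces $g(e_1) \geq 1.01\varepsilon g_v$, with the worst case coming from $g_v = d(v)$ because of the $\sqrt{x}$ correction term. Let $A, B, C$ denote the constants appearing in Theorem~\ref{structure thm}; concretely $A \approx 0.244$, $B \approx 0.499$, $C \approx 0.247$, so that $e_1 \geq A g_v d(v) - B d(v)(a+s) - C g_v w$.

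In Case A, where $B d(v)(a+s) + C g_v w \leq (A - T_0) g_v d(v)$, Theorem~\ref{structure thm} gives $e_1 \geq T_0 g_v d(v)$, so $\min(g(e_1), g(e_2)) \geq 1.01\varepsilon g_v$ and the sparse save alone suffices. In Case B, where $B(a+s) + C g_v w/d(v) > (A - T_0) g_v$, I would use $K < 1/e$ (giving $K\alpha/(1+\alpha) \leq 1 - \rho$) together with $g_v \leq d(v)$ (giving $K\beta g_v/(d(v)+\beta g_v) \geq K\beta g_v/((1+\beta)d(v))$) and the equality $K\alpha/(1+\alpha) = K\beta/(1+\beta)$ (since $\alpha = \beta$) to conclude
\[
  \Expect{\savings_{v,0,\ordering}} \geq \frac{K\alpha}{1+\alpha}\left[(a+s) + \frac{g_v w}{d(v)}\right].
\]
Minimizing the right-hand side subject to the Case B constraint is a two-variable linear program; since $B > C$, the minimum is attained at $(a+s) = (A - T_0)g_v/B$ and $w = 0$, yielding approximately $0.00311 g_v$, which exceeds the target $1.01\varepsilon g_v = 0.00306 g_v$.

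The main obstacle is that the constants are very tight: the Case B margin above the target is only about $1.6\%$, and the closure of the argument depends crucially on the balanced choices $\alpha = \beta = 1/50$ (which make the per-neighbor coefficients for lordlier and weakly egalitarian neighbors agree up to the factor $g_v/d(v)$) and $\rho = 1 - e^{-1}\alpha/(1+\alpha)$ (which calibrates the subservient coefficient $1-\rho$ to match these). The $1.01$ slack on the right-hand side is what allows the argument to close at all. The threshold $T_0$ in the middle step must also be chosen uniformly in the ratio $g_v/d(v)$, which requires tracking the $\sqrt{x}$ correction in $g(x)$ carefully and explains why the worst case for $T_0$ comes from $g_v = d(v)$.
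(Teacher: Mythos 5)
Your overall strategy—combining Lemmas~\ref{lordly save}, \ref{aberrant save}, and~\ref{sparse save} with Theorem~\ref{structure thm}, then doing a linear-programming-style split over how much blocking the lordlier/subservient/weakly-egalitarian neighbors do—is essentially the argument the paper runs, phrased in dual form. The paper's ``may assume'' step (assume the non-sparse savings are at most $1.01\varepsilon\Gap(v)$, solve the LP for the worst-case $e_1$, then check the sparse save) and your two-case split on whether $e_1 \geq T_0 g_v d(v)$ are the same optimization; the paper is implicitly picking $T_0$ equal to its $\mathrm{sparsity}_1$, which is exactly the value at which your Case B gives $1.01\varepsilon g_v$ with zero slack, so that only your Case A needs a numerical check. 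Your Case B LP, the use of $K < 1/e$ to match the subservient coefficient, and the identification of $g_v = d(v)$ as the worst case for the $\sqrt{\,\cdot\,}$ correction are all correct.

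The genuine problem is the numerical value of the threshold. You claim $T_0 \leq 0.028$ suffices for Case A, but with $K \approx 0.3664$, $K/(1+\alpha)^2 \approx 0.3522$, and $1/(3(1-\varepsilon)) \approx 0.3344$, the worst-case Case A check at $g_v = d(v) = |L(v)|$ becomes $KT_0\bigl(\tfrac{K}{(1+\alpha)^2} - \tfrac{\sqrt{2T_0}}{3(1-\varepsilon)}\bigr) \geq 1.01\varepsilon$. Plugging $T_0 = 0.028$ gives roughly $0.3664\cdot 0.028 \cdot (0.3522 - 0.079) \approx 0.0028 < 1.01\varepsilon \approx 0.00306$, so $T_0 = 0.028$ does \emph{not} force $g(e_1) \geq 1.01\varepsilon g_v$. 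The constraint actually forces $T_0 \gtrsim 0.031$. On the other side, Case B needs $T_0 \leq A - 1.01\varepsilon\tfrac{B(1+\alpha)}{K\alpha} \approx 0.0315$ (this is the paper's $\mathrm{sparsity}_1$). So the admissible window for $T_0$ is only about $[0.031, 0.0315]$—nonempty, so the argument still closes, but the margin you report (about $1.6\%$ in Case B, based on $T_0 = 0.028$) is an artifact of the wrong threshold: with a valid $T_0$ the slack is closer to $1.2\%$ at the paper's endpoint $T_0 = \mathrm{sparsity}_1$, and far less at the other end of the window. This is worth emphasizing because the inequalities here really are razor-thin (for instance $1-\rho \geq K\alpha/(1+\alpha)$ holds by only about $0.4\%$), and a casual underestimate of $T_0$ would have turned a valid proof into one that appears to fail.
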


\begin{proof}    
%  By Lemmas \ref{lordly save}, \ref{aberrant save}, and \ref{sparse save},
 % \begin{multline}
  %  \label{sparsity savings equation}
   % \Expect{\savings_{v, 0, \ordering}} \geq (1 - K)|\subserv(v)| \\
  %  + K\left(\frac{\alpha}{1 + \alpha}|\aberrant(v)| + \frac{\beta\Gap(v)}{d(v) + \beta\Gap(v)}|\weakEgal(v)|\right) \\
  %  + \min_{i\in\{1,2\}}\frac{K}{|L(v)|}\left(\frac{K}{(1 + \alpha^2)} - \frac{\sqrt{2 e_i}}{3(1 - \varepsilon)^2}\right)e_i,
  %\end{multline}
  %where $e_1 = |E(\overline{G[\egal(v)]})|$ and $e_2 = \binom{d(v)}{2}$.
  By Theorem \ref{structure thm},
  \begin{multline}
    \label{egal sparsity lower bound}
    |E(\overline{G[\egal(v)]})| \geq \left(\frac{1}{4} - \frac{\varepsilon(4 + \beta + 2\alpha)}{2(1 - \varepsilon)}\right)\Gap(v)d(v) - \left(\frac{1}{2} - \frac{\varepsilon(1 + \beta)}{2(1 - \varepsilon)}\right)d(v)|\notEgal(v)| \\
    - \left(\frac{1}{4} - \frac{\varepsilon(2 + \beta)}{2(1 - \varepsilon)}\right)\Gap(v)|\weakEgal(v)|.
  \end{multline}

  % First, suppose the minimum in the right term of~\eqref{sparsity savings equation} is attained for $i = 1$.
  By Lemmas~\ref{lordly save} and~\ref{aberrant save}, we may assume that
  \begin{equation*}
    (1 - \rho)|\subserv(v)| + \frac{K\alpha}{1 + \alpha}\left(|\aberrant(v)| + \frac{\Gap(v)}{d(v)}|\weakEgal(v)|\right) \leq 1.01\varepsilon\cdot\Gap(v).
  \end{equation*}
  Subject to this inequality, since $1 - \rho  \geq \frac{K\alpha}{1 + \alpha}$ and $\frac{1}{2} - \frac{\varepsilon(1 + \beta)}{2(1 - \varepsilon)} \geq \frac{1}{4} - \frac{\varepsilon(2 + \beta)}{2(1 - \varepsilon)}$, the right side of~\eqref{egal sparsity lower bound} is at least as large as the case when $|\subserv(v)| = |\weakEgal(v)| = 0$ and $|\aberrant(v)| \leq 1.01\varepsilon(1 + \alpha)\Gap(v)/(K\alpha)$, that is
  \begin{equation*}
    |E(\overline{G[\egal(v)]})| \geq \Gap(v)d(v)\left(\frac{1}{4} - \frac{\varepsilon(4 + \beta + 2\alpha)}{2(1 - \varepsilon)} - 1.01\varepsilon\frac{1 + \alpha}{\alpha K}\left(\frac{1}{2} - \frac{\varepsilon(1 + \beta)}{2(1 - \varepsilon)}\right)\right).
  \end{equation*}
  Therefore, since $\Gap(v),|L(v)| \leq d(v) \leq |L(v)|/(1 - \varepsilon)$, by Lemma~\ref{sparse save}, we have
  \begin{equation}\label{total expected savings equation}
    \Expect{\savings_{v, 0, \ordering}} \geq \min_{i\in\{1,2\}} K\left(\frac{K}{(1 + \alpha)^2} - \frac{\left(2\cdot\mathrm{sparsity}_i(\alpha, \beta, \varepsilon)\right)^{1/2}}{3(1 - \varepsilon)}\right)\Gap(v)\cdot \mathrm{sparsity}_i(\alpha, \beta, \varepsilon),
  \end{equation}
  where $\mathrm{sparsity}_1(\alpha, \beta, \varepsilon) = \frac{1}{4} - \frac{\varepsilon(4 + \beta + 2\alpha)}{2(1 - \varepsilon)} - 1.01\varepsilon\frac{1 + \alpha}{\alpha K}\left(\frac{1}{2} - \frac{\varepsilon(1 + \beta)}{2(1 - \varepsilon)}\right)$ and $\mathrm{sparsity}_2(\alpha, \beta, \varepsilon) = 1/2$.

  Since $\alpha = \beta = 1/50$, $\varepsilon = 1/330$, and $K = .999\rho e^{-330\rho/329}$, the right side of \eqref{total expected savings equation} is at least  $1.01\varepsilon\Gap(v)$, as required.
\end{proof}

Finally we can prove Theorem~\ref{main thm}.
\begin{proof}[Proof of Theorem~\ref{main thm}]
  Actually we prove Theorem \ref{inductive thm}.
  Recall that we assume $G$ is $L$-critical, and we assume $(L, M)$ is a total correspondence assignment for $G$ such that an $(L, M)$-coloring is an $L$-coloring.
  We will apply Theorem~\ref{metatheorem} with $\xi_1 = 1.01$, $\xi_2 = \varepsilon$, $\rho = 1 - 1/(50e(1 + 1/50))$, and $\sigma = 0$ to show that $G$ is $(L, M)$-colorable, contradicting that $G$ is $L$-critical.
  
  Let $v\in V(G)$.
  Since $G$ is $L$-critical, $d(v) \geq |L(v)| \geq \log^{\logexp}(\Delta)$.  Hence we may assume that $G$ has minimum degree at least $\delta(\varepsilon)$.
  By Lemma~\ref{expected savings beats gap}, since $\Gap(v) - \Save(v) \geq \log^{\logexp}(\Delta)$,
  \begin{equation*}
    \Expect{\savings_{v, 0, \ordering}} \geq \xi_2\log^{\logexp}\Delta,
  \end{equation*}
  and since $\Save(v) \leq \varepsilon\Gap(v)$, 
  \begin{equation*}
    \Expect{\savings_{v, 0, \ordering}} \geq \xi_1\Save(v).
  \end{equation*}
  Therefore by Theorem~\ref{metatheorem}, $G$ is $(L, M)$-colorable, a contradiction.
\end{proof}
 
\section{Concentrations}\label{concentration section}

In this section we prove Lemma~\ref{concentration lemma}.  Recall that $\varepsilon, \sigma\in[0, 1)$, $\rho\in[0,1]$, $G$ is a graph with correspondence-assignment $(L, M)$ satisfying the assumptions of Definition~\ref{random coloring procedure}, $G$ has maximum degree at most $\Delta$, $\max_v |L(v)| \leq \Delta$, and $\Delta$ is sufficiently large.

We prove Lemma~\ref{concentration lemma} using Talagrand's Inequality.  Instead of applying Talagrand's Inequality in its original form (see Theorem~\ref{OG Tala}), it is common to derive from it a ``concentration inequality''.  The following theorem is such an example; it appears in the book of Molloy and Reed~\cite[Chapter 10]{MR00}.

\begin{thm}[``Talagrand's Inequality II'' \cite{MR00}]\label{mr tala}
  Let $X$ be a non-negative random variable, not identically 0, which is determined by $n$ independent trials $T_1, \dots, T_n$, and satisfying the following for some $c, r > 0$:
  \begin{enumerate}
  \item changing the outcome of any one trial can affect $X$ by at most $c$, and
  \item for any $s$, if $X\geq s$ then there is a set of at most $rs$ trials whose outcomes certify that $X\geq s$,
  \end{enumerate}
  then for any $0 \leq t \leq \Expect{X}$,
  \begin{equation*}
    \Prob{|X - \Expect{X}| > t + 60c\sqrt{r\Expect{X}}} \leq 4\exp\left(-\frac{t^2}{8c^2r\Expect{X}}\right).
  \end{equation*}
\end{thm}

\begin{remark}\label{talagrand mistake remark}
  As we explain in Appendix~\ref{tala proof section}, Molloy and Reed's~\cite{MR00} proof of Theorem~\ref{mr tala} is flawed, and we correct this flaw.  After submitting the initial version of this paper, we discovered that Molloy and Reed~\cite{MR14} later published a corrected version of Theorem~\ref{mr tala}, but they did not mention that the initial version is incorrect nor did they explain the flaw.  We discuss this correction further later in this section in Remark~\ref{talagrand correction remark} and also in Appendix~\ref{tala proof section}.  We remark that all applications of Theorem~\ref{mr tala} that we know of follow from the version in~\cite{MR14} as well as Theorem~\ref{exceptional talagrand's}.
\end{remark}

Unfortunately Theorem~\ref{mr tala} is too restrictive for us.  In our situation, changing the outcome of a trial is changing either whether or not a vertex is activated, the color assigned to a vertex, or the outcome of an ``equalizing coin flip,'' and this affects the value of our random variables if many vertices are randomly assigned the same color.  For example, if $G$ is $\Delta$-regular and
$v$ is a vertex, then
%such that $u\ordering v$ for each $u\in N(v)$
we may sample a naive partial coloring $(\phi, \uncolvtcs)$ such that $u \notin U$ and $\phi(u)\notin V(M_{uv})$ for each $u\in N(v)$.  In this case, the value of $\aberrance_{v, \sigma}$ is $\Delta$; however, if there is a vertex $w$ such that $N(w) = N(v)$, then changing the outcome of the trial determining $\phi(w)$ could make $\aberrance_{v, \sigma}$ become 0 (if every $u \in N(v)$ is assigned a color corresponding to $\phi(w)$).  Therefore in order to apply Theorem~\ref{mr tala} to $\aberrance_{v, \sigma}$, the value of $c$ needs to be at least $d(v)$, and we do not get a useful bound.

However, it is very unlikely that every neighbor of $v$ is assigned a color corresponding to $\phi(w)$.  It is possible to derive a concentration inequality from Talagrand's Inequality with conditions similar to Theorem~\ref{mr tala} that apply for all but an unlikely set of exceptional outcomes.  One example is ``Talagrand's Inequality V'' in the book of Molloy and Reed~\cite[Chapter 20]{MR00}; another example was proved by Bruhn and Joos~\cite[Theorem 12]{BJ15}.  These results are also not enough for us to prove Lemma~\ref{concentration lemma}, as we discuss later.  Now we need some definitions in order to state our concentration inequality.

\begin{define}
  Let $((\Omega_i, \Sigma_i, \mathbb P_i))_{i=1}^n$ be probability spaces, let $(\Omega, \Sigma, \mathbb P)$ be their product space, let $\Omega^* \subseteq \Omega$ be a set of \textit{exceptional outcomes}, and let $X : \Omega \rightarrow \mathbb R_{\geq0}$ be a non-negative random variable.  Let $r, \change \geq 0$.
  \begin{itemize}
  \item If $\omega = (\omega_1, \dots, \omega_n) \in \Omega$ and $s > 0$, an \textit{$(r, \change)$-certificate} for $X, \omega, s$, and $\Omega^*$ is an index set $I\subseteq\{1, \dots, n\}$ of size at most $rs$ such that for all $k\geq 0$, we have that
  \begin{equation*}
    X(\omega') \geq s - k\change,
  \end{equation*}
  for all $\omega' = (\omega'_1, \dots, \omega'_n)\in\Omega\setminus\Omega^*$  such that $\omega_i\neq\omega_i'$ for at most $k$ values of $i\in I$.
  \item If for every $s > 0$ and $\omega\in\Omega\setminus\Omega^*$ such that $X(\omega) \geq s$, there exists an $(r, \change)$-certificate for $X, \omega, s$, and $\Omega^*$, then $X$ is \textit{$(r, \change)$-certifiable} with respect to $\Omega^*$.
  \end{itemize}
\end{define}

Note that if $\Omega^* = \varnothing$, then a random variable being $(r, \change)$-certifiable with respect to $\Omega^*$ is similar to it satisfying the conditions of Theorem~\ref{mr tala} with $c = \change$ (we use $\change$ because later we use $c$ to denote a color).  We introduce $k$ into the definition of $(r, \change)$-certificates rather than consider changing the outcome of only one trial because it is necessary in order to apply the original form of Talagrand's Inequality, for reasons we will see in Appendix~\ref{tala proof section}.  

Now we state our concentration inequality, as follows.

\begin{thm}\label{exceptional talagrand's}
  Let $((\Omega_i, \Sigma_i, \mathbb P_i))_{i=1}^n$ be probability spaces, let $(\Omega, \Sigma, \mathbb P)$ be their product space, let $\Omega^* \subseteq \Omega$ be a set of exceptional outcomes, and let $X : \Omega \rightarrow \mathbb R_{\geq0}$ be a non-negative random variable.  Let $r, \change \geq 0$.
  
  If $X$ is $(r, \change)$-certifiable with respect to $\Omega^*$, 
then for any $t > 96\change\sqrt{r\Expect{X}} +  128r\change^2 + 8\Prob{\Omega^*}(\sup X),$
\begin{equation*}
\Prob{|X - \Expect{X}| > t} \leq 4\exp\left({\frac{-t^2}{8\change^2r(4\Expect{X} + t)}}\right) + 4\Prob{\Omega^*}.
\end{equation*}
\end{thm}

Theorem~\ref{exceptional talagrand's} is similar to Theorem 12 of Bruhn and Joos~\cite{BJ15}.  Bruhn and Joos defined \textit{upward $(s, c)$-certificates}.  If a random variable is $(r, \change$)-certifiable with respect to a set of exceptional outcomes $\Omega^*$, then it has upward $(s, c)$-certificates with $c = \change$ and $s = r\cdot \sup X$, and for the random variables with which we are concerned, they have upward $(s, c)$-certificates only if $s \geq \sup X$.  The important difference between the bounds supplied by their result and Theorem~\ref{exceptional talagrand's} is that we have $r(4\Expect{X} + t)$ whereas they simply have $s$.  Bruhn and Joos~\cite{BJ15} apply their concentration inequality to random variables for which $\Expect{X} = \Omega(\sup X)$, so this difference does not concern them.  However, as mentioned, in our situation it is possible that $\sup X = \Delta$ and yet $\Expect{X} = \log^{10}\Delta$.  Thus, we are unable to use the result of Bruhn and Joos to prove Lemma~\ref{concentration lemma}.  ``Talagrand's Inequality V'' in \cite{MR00} has essentially the same problem, with $D$ taking the role of $s$.  We prove Theorem~\ref{exceptional talagrand's} in Appendix~\ref{tala proof section}; our proof is similar to the proof of Bruhn and Joos.

\begin{remark}\label{talagrand correction remark}
  We should expect Theorem~\ref{mr tala} to effectively follow from Theorem~\ref{exceptional talagrand's} in the case when $\Omega^* = \varnothing$, but this is not the case due to the presence of the $128r\change^2$ term in the lower bound on $t$ in the hypothesis.  Molloy and Reed's~\cite{MR14} corrected version of Theorem~\ref{mr tala} similarly introduces a $64rc^2$ term inside of the probability that $X$ deviates from its expectation.  Fortunately, in all applications of Theorem~\ref{mr tala} that we know of, we can still apply Theorem~\ref{exceptional talagrand's} with this additional term.  Indeed, in most applications $r$ and $\change$ are constants and $t$ is arbitrarily large, and almost always $r\change^2 = o\left(\Expect{X}\right)$, in which case this term is subsumed by the $96\change\sqrt{r\Expect{X}}$ term.  In this paper, $\change$ may be $\log^3\Delta$ and $\Expect{X}$ may be small, but we always apply Theorem~\ref{exceptional talagrand's} with $t \geq \log^9\Delta$.
\end{remark}

The exceptional outcomes we consider when applying Theorem~\ref{exceptional talagrand's} will involve many neighbors of a vertex $v$ receiving the same color (from some vertex $w$'s perspective), so we need this to be unlikely.
%In our coloring procedure, when adjacent vertices $u$ and $v$ received the same color, we uncolored $v$ if $|L(v)| \geq |L(u)|$.  If we instead uncolored both $u$ and $v$ (as in \cite{MR00}) or flipped a coin to decide which to uncolor (as in \cite{BJ15}), then changing the color of a vertex may be likely to greatly affect the number of uncolored subservient neighbors, because subservient neighbors, having fewer available colors, are more likely to be assigned the same color.  Luckily, our trick of uncoloring a vertex only if it is in conflict with a neighbor having equal or fewer available colors resolves this issue.
This explains why we need $\sigma < 1$ to apply Theorem~\ref{exceptional talagrand's} to $\aberrance_{v, \sigma}$, $\pairs_{v, \sigma}$, and $\trips_{v, \sigma}$.  In the extreme case, a vertex $v$ could have many neighbors with only two available colors, one of which does not correspond to a color in $L(v)$ and one of which corresponds to the same color for $v$.  Switching the color of a vertex may cause many neighbors of $v$ to become uncolored (or colored), which will significantly affect either $\pairs_{v, \sigma}$ and $\trips_{v, \sigma}$ or $\aberrance_{v, \sigma}$, or all three.  However, it is unlikely that many $\sigma$-egalitarian neighbors of $v$ receive the same color, as long as $|L(v)|$ is large.

We always apply Theorem \ref{exceptional talagrand's} with $t = \max\{\Expect{X}^{5/6}, \log^{\logexpless}\Delta\}$, $r\leq 9$, and $\change \leq \log^3 \Delta$.  Note that, assuming $\Delta$ is sufficiently large and $\Prob{\Omega^*}$ is sufficiently small, $t$ is large enough to apply Theorem~\ref{exceptional talagrand's}.

The following proposition will be useful.
\begin{prop}\label{simplify t}
  If $X$ is a non-negative random variable and $t = \max\{\gamma\cdot\Expect{X}^{5/6}, \log^{\logexpless}\Delta\}$ where $\gamma > 0$, then
  \begin{equation*}
    \frac{t^2}{4\Expect{X} + t} \geq \frac{\log^{36/5}\Delta}{1 + 4/\gamma^{6/5}}.
  \end{equation*}
\end{prop}
\begin{proof}
  Since $\Expect{X} \leq (t/\gamma)^{6/5}$,
  \begin{equation*}
    \frac{t^2}{4\Expect{X} + t} \geq \frac{t^2}{4(t/\gamma)^{6/5} + t} \geq \frac{t^{4/5}}{1 + 4/\gamma^{6/5}}.
  \end{equation*}
  Since $t \geq \log^{\logexpless}\Delta$, the result follows.
\end{proof}

The following proposition bounds the probability that many non-subservient neighbors of a vertex receive the same color.
\begin{prop}\label{many colors exceptional outcome}
  For each $v\in V(G)$, let $\Omega^*_{v,\sigma}$ be the set of events where there exists $u\in V(G), c\in L(u)$, and a set $X\subset (\egal_\sigma(v)\cap N(u))$ of size at least $\log\Delta$ such that for each $w\in X$, we have that $\phi(w)c\in M_{wu}$.  Now
  \begin{equation*}
    \Prob{\Omega^*_{v, \sigma}} \leq \Delta^4\left(\frac{e}{(1 - \sigma)(1 - \varepsilon)\log\Delta}\right)^{\log\Delta}.
  \end{equation*}
\end{prop}
\begin{proof}
  For each $u\in V(G)$ and $c\in L(u)$, let
  \begin{equation*}
   Y_{u,c} = |\{w\in (N(u)\cap \egal_\sigma(v)) : \phi(w)c\in M_{wu}\}|.
  \end{equation*}
  Now
  \begin{equation*}
    \Prob{Y_{u,c} \geq \log\Delta} \leq \sum_{i=\lceil \log \Delta\rceil}^{d(v)}{d(v)\choose i}\frac{1}{((1 - \sigma)|L(v)|)^i}.
  \end{equation*}

  By applying the bound $\binom{d(v)}{i} < \left(\frac{e\cdot d(v)}{i}\right)^i$ and using the fact that $\frac{1}{|L(v)|} \leq \frac{1}{(1 - \varepsilon)d(v)}$,
  \begin{equation*}
    \Prob{Y_{u,c} \geq \log\Delta} \leq \sum_{i=\lceil \log \Delta\rceil}^{d(v)}\left(\frac{e\cdot d(v)}{i}\right)^i\frac{1}{(1 - \varepsilon)^id(v)^i} = \sum_{i=\lceil \log \Delta\rceil}^{d(v)}\left(\frac{e}{(1 - \sigma)(1 - \varepsilon) i}\right)^i.
  \end{equation*}

  Since each term in the sum is at most $\left(\frac{e}{(1 - \sigma)(1 - \varepsilon)\log\Delta}\right)^{\log\Delta}$ and there are at most $\Delta$ terms, it follows that  
  \begin{equation*}
    \Prob{Y_{u,c}\geq\log\Delta} \leq \Delta\left(\frac{e}{(1 - \sigma)(1 - \varepsilon)\log\Delta}\right)^{\log \Delta}.
  \end{equation*}

  Since $|N(u)\cap \egal_\sigma(v)| = 0$ for all but $\Delta^2$ vertices $u$, and each has at most $\Delta$ available colors, by the Union Bound,
  \begin{equation*}\label{lordly exceptional probability}
    \Prob{\Omega^*_{v, \sigma}} \leq \Delta^4\left(\frac{e}{(1 - \sigma)(1 - \varepsilon)\log\Delta}\right)^{\log\Delta},
  \end{equation*}
  as desired.
\end{proof}
Observe that $\Prob{\Omega^*_{v, \sigma}} = o(\Delta^{-4})$.

Now we can prove Lemma~\ref{concentration lemma}.  For each $v\in V(G)$, let $(\Omega_{v, 1}, \Sigma_{v, 1}, \mathbb P_{v, 1})$ be the probability space where $\Omega_{v, 1} = L(v)$, the sigma-algebra $\Sigma_{v, 1}$ is the discrete sigma-algebra, and $\mathbb P_{v, 1}$ is the uniform distribution (i.e.\ this probability space corresponds to assigning $v$ a color from $L(v)$ uniformly at random), let $(\Omega_{v, 2}, \Sigma_{v, 2}, \mathbb P_{v, 2})$ be the probability space where $\Omega_{v, 2} = \{\text{heads}, \text{tails}\}$, the sigma-algebra $\Sigma_{v, 2}$ is again discrete, and $\mathbb P_{v, 2}[\text{heads}] = \rho$ (i.e.\ this probability space corresponds to activating $v$ with probability $\rho$), and for each $c \in L(v)$, let $(\Omega_{v, c}, \Sigma_{v, c}, \mathbb P_{v, c})$ be the probability space where $\Omega_{v, c} = \{\text{heads}, \text{tails}\}$, the sigma-algebra $\Sigma_{v, c}$ is again discrete, and $\mathbb P_{v, c}[\text{heads}] = 1 - K_{\varepsilon, \rho} / p$, where $p$ is the probability that $v$ is not uncolored after an application of the local naive random coloring procedure with activation probability $\rho$, conditioned on the event that $v$ is assigned color $c$ (i.e.\ this probability space corresponds to an $\varepsilon$-equalizing coin-flip for $v$ and $c$).  Let $(\Omega, \Sigma, \mathbb P)$ be the product space of $(\Omega_{v, i}, \Sigma_{v, i}, \mathbb P_{v, i})_{v\in V(G), i\in\{1,2\}}$ and $(\Omega_{v, c}, \Sigma_{v, c}, \mathbb P_{v, c})_{v\in V(G), c\in L(v)}$.  In order to sample a naive partial coloring using the local naive random coloring procedure with activation probability $\rho$ and $\varepsilon$-equalizing coin flips, we sample from $\Omega$.  If $\omega$ is an outcome in $\Omega$, then we let $(\phi_\omega, \uncolvtcs_\omega)$ be the corresponding naive partial coloring and let $A_\omega$ be the set of activated vertices. 
We prove each random variable is $\Delta$-concentrated individually, as follows.

\begin{proof}[Proof that $\unact_{v, \ordering}$ is $\Delta$-concentrated]
  We claim that $\unact_{v, \ordering}$ is $(r, \change)$-certifiable with respect to $\Omega^* = \varnothing$, where $r = 1$ and $\change = 1$.  Let $s > 0$ and let $\omega\in\Omega$ such that $\unact_{v, \ordering}(\omega) \geq s$.  We show that there is an $(r, \change)$-certificate, $I$, for $\unact_{v, \ordering}, \omega, s,$ and $\varnothing$.

  Since $\unact_{v, \ordering}(\omega) \geq s$, there is a set $S$ of $s$ neighbors $u$ of $v$ such that $u\ordering v$ and $u\notin A_\omega$.  %Each such vertex $u\in S_1$ either has a neighbor $u'$ such that $|L(u')| \leq |L(u)|$ and $\phi_\omega(u)\phi_\omega(u')\in M_{uu'}$ or is uncolored by an $\varepsilon$-equalizing coin-flip.  In the former case, we choose precisely one such neighbor $u'$ of $u$, let $u'$ be in the set $S_2$, and let $u$ be in the set $S_{u'}$.  In the latter case, we let $u\in S'_1$.  By the definition of these sets,
  Thus,
  \begin{equation}
    \label{subservience lower bound 1}
    \unact_{v, \ordering}(\omega) \geq |S|  = s.
  \end{equation}
  We let $I$ index the trials determining if $u \in A_\omega$ for the $u\in S$, so $|I| = s$. %, and for each $u\in S_1\setminus S'_1$, there exists $u'\in N(u)\cap S_2$, and we also let $I$ index the trial determining $\phi_\omega(u')$.  For each $u\in S'_1$, $u$ is uncolored by an $\varepsilon$-equalizing coin flip, and we let $I$ index this trial.  Note that $|I| \leq 2s$.  
  
  We claim that $I$ is an $(r, \change)$-certificate for $\unact_{v, \ordering}, \omega, s,$ and $\varnothing$.  To that end, let $\omega'\in \Omega$ and $k\geq 0$ such that $\omega$ and $\omega'$ differ for at most $k$ trials indexed by $I$.  %We say a vertex \textit{keeps its color} if $\phi_\omega(u) = \phi_{\omega'}(u)$.
  Let $T = S \setminus A_{\omega'}$. %that are also uncolored by an $\varepsilon$-equalizing coin-flip in the outcome $\omega'$, let $T_2$ be the set of vertices in $S_2$ that keep their color, and for each $u\in T_2$, let $T_u$ be the set of vertices in $S_u$ that keep their color.
  Note that
  \begin{equation}\label{subservience lower bound 2}
    \unact_{v, \ordering}(\omega') \geq |T|.% + \sum_{u\in T_2}|T_u|.
  \end{equation}
  %Moreover, the sets in the above inequality are pairwise disjoint.
  Since $\omega$ and $\omega'$ differ in at most $k$ trials indexed by $I$,
  \begin{equation}\label{subservience different outcomes}
    |S\setminus T| \leq k. %+ |S_2\setminus T_2| + |\cup_{u\in T_2}S_u\setminus T_u| \leq k.
  \end{equation}
  %Since $\omega\notin\Omega^*_{v, 0}$, for each $u\in S_2$, we have that $|S_u| \leq \log \Delta = \change$.
  Therefore by \eqref{subservience lower bound 1}, \eqref{subservience lower bound 2}, and \eqref{subservience different outcomes}, $\unact_{v, \ordering}(\omega') \geq s - k\change$, so $I$ is an $(r, \change)$-certificate for $\unact_{v, \ordering}, \omega, s,$ and $\varnothing$, as claimed.  Thus, $\unact_{v, \ordering}$ is $(r, \change)$-certifiable with respect to $\varnothing$, as claimed, and we can apply Theorem~\ref{exceptional talagrand's}.
  We choose $t = \max\{\Expect{\unact_{v, \ordering}}^{5/6}, \log^{\logexpless}\Delta\}$, so by Proposition~\ref{simplify t} and Theorem~\ref{exceptional talagrand's}, for some constant $\gamma_1 > 0$,
  \begin{equation*}
    \Prob{|\unact_{v, \ordering} - \Expect{\unact_{v, \ordering}}| > t} \leq 4\exp(-\gamma_1(\log^{26/5}(\Delta))).
  \end{equation*}
  Since $\Delta$ is sufficiently large, the result follows.
\end{proof}

\begin{proof}[Proof that $\aberrance_{v,\sigma}$ is $\Delta$-concentrated]
  We cannot show that $\aberrance_{v,\sigma}$ is $(r, \change)$-certifiable with respect to any appropriate set of exceptional outcomes, but we can express $\aberrance_{v,\sigma}$ as the difference of two random variables that are.  To that end, we define the following random variables in which $(\phi, \uncolvtcs)$ is a random naive partial coloring:
  \begin{align*}
    &\totesabs_v = |\{u \in \egal_\sigma(v) : \phi(u)\notin V(M_{uv})\}|,\text{ and}\\
    &\uncolabs_v = |\{u \in \egal_\sigma(v)\cap \uncolvtcs : \phi(u)\notin V(M_{uv})\}|.
  \end{align*}
  
  Note that $\totesabs_v$ is $(r, \change)$-certifiable with respect to $\Omega^* = \varnothing$, where $r, \change =1$, by the same argument as in the proof that $\unact_{v, \ordering}$ is $\Delta$-concentrated.
  Note also that $\Expect{\aberrance_{v,\sigma}} = K_{\varepsilon, \rho}\cdot\Expect{\totesabs_v} = K_{\varepsilon, \rho}\cdot\Expect{\uncolabs_v}/(1 - K_{\varepsilon, \rho})$.
  Let $t = \max\{\Expect{\aberrance_{v,\sigma}}^{5/6},\allowbreak \log^{\logexpless}\Delta\}$, so by Proposition~\ref{simplify t} and Theorem~\ref{exceptional talagrand's}, for some constant $\gamma_2 > 0$,
  \begin{equation}\label{total aberrance concentration}
    \Prob{|\totesabs_v - \Expect{\totesabs_v}| > t} \leq 4\exp(-\gamma_2(\log^{36/5}(\Delta))).
  \end{equation}
  
  Now we show that $\uncolabs_v$ is $(r, \change)$-certifiable with exceptional outcomes $\Omega^*_{v, \sigma}$ from Proposition \ref{many colors exceptional outcome} with $r = 3$ and $\change = \log\Delta$.  Let $s > 0$ and let $\omega \in \Omega \setminus\Omega^*_{v, \sigma}$ such that $\uncolabs_v(\omega) \geq s$.
    Since $\uncolabs_v(\omega) \geq s$, there is a set $S_1$ of $s$ $\sigma$-egalitarian neighbors $u$ of $v$ such that $\phi(u) \notin V(M_{uv})$ and $u\in U$.  Each such vertex $u\in S_1$ either has a neighbor $u'\in A_\omega$ such that $|L(u')| \geq |L(u)|$ and $\phi_\omega(u)\phi_\omega(u')\in M_{uu'}$, is itself not in $A_\omega$, or is uncolored by an $\varepsilon$-equalizing coin-flip.  In the first case, we choose precisely one such neighbor $u'$ of $u$, let $u'$ be in the set $S_2$, and let $u$ be in the set $S_{u'}$.  In the second case, let $u \in S'_1$, and in the third case, we let $u\in S''_1$.  By the definition of these sets,
  \begin{equation}
    \label{unmatched lower bound 1}
    \uncolabs_v(\omega) \geq |S'_1| + |S''_1| + \sum_{u \in S_2}|S_u|  = s.
  \end{equation}
  We let $I$ index the trials determining if $u \in A_\omega$ for the $u\in S'_1 \cup S_2$, we let $I$ index the trial for the $\varepsilon$-equalizing coin flip for $u$ and $\phi_\omega(u)$ for each $u \in S''_1$, and for each $u\in S_1\setminus (S'_1 \cup S''_1)$, there exists $u'\in N(u)\cap S_2$, and we let $I$ index the trial determining $\phi_\omega(u')$.  We also let $I$ index the trial determining $\phi_\omega(u)$ for each $u \in S_1$.  Note that $|I| \leq 3s$.  
  
  We claim that $I$ is an $(r, \change)$-certificate for $\uncolabs_v, \omega, s,$ and $\Omega^*_{v, \sigma}$.  To that end, let $\omega'\in \Omega\setminus\Omega^*_{v, \sigma}$ and $k\geq 0$ such that $\omega$ and $\omega'$ differ for at most $k$ trials indexed by $I$.  We say a vertex \textit{keeps its color} if $\phi_\omega(u) = \phi_{\omega'}(u)$.
  Let $T'_1$ be the set of vertices in $S'_1 \setminus A_{\omega'}$ that keep their color, let $T''_1$ be the set of vertices in $S''_1$ that keep their color and are also uncolored by an $\varepsilon$-equalizing coin-flip in the outcome $\omega'$, let $T_2$ be the set of vertices in $S_2 \cap A_{\omega'}$ that keep their color, and for each $u\in T_2$, let $T_u$ be the set of vertices in $S_u$ that keep their color.
  Note that
  \begin{equation}\label{unmatched lower bound 2}
    \uncolabs_v(\omega') \geq |T'_1| + |T''_1| + \sum_{u\in T_2}|T_u|.
  \end{equation}
  Moreover, the sets in the above inequality are pairwise disjoint.
  Since $\omega$ and $\omega'$ differ in at most $k$ trials indexed by $I$,
  \begin{equation}\label{unmatched different outcomes}
    |S'_1\setminus T'_1| + |S''_1\setminus T''_1| + |S_2\setminus T_2| + |\cup_{u\in T_2}S_u\setminus T_u| \leq k.
  \end{equation}
  Since $\omega\notin\Omega^*_{v, \sigma}$, for each $u\in S_2$, we have that $|S_u| \leq \log \Delta = \change$.
  Therefore by \eqref{unmatched lower bound 1}, \eqref{unmatched lower bound 2}, and \eqref{unmatched different outcomes}, $\uncolabs_v(\omega') \geq s - k\change$, so $I$ is an $(r, \change)$-certificate for $\uncolabs_v, \omega, s,$ and $\Omega^*_{v, \sigma}$, as claimed.

  By Proposition~\ref{simplify t} and Theorem~\ref{exceptional talagrand's}, for some constant $\gamma_3 > 0$,
\begin{equation}\label{uncolored aberrance concentration}
  \Prob{|\uncolabs_v - \Expect{\uncolabs_v}| > t} \leq 4\exp(-\gamma_3(\log^{26/5}(\Delta))) + 4\Prob{\Omega^*_v}.
\end{equation}

Since $\aberrance_{v,\sigma} = \totesabs_v - \uncolabs_v$, it follows from \eqref{total aberrance concentration}, \eqref{uncolored aberrance concentration}, and Proposition \ref{many colors exceptional outcome} that $\aberrance_{v,\sigma}$ is $\Delta$-concentrated, as desired.
\end{proof}

\begin{proof}[Proof that $\pairs_{v, \sigma}$ and $\trips_{v, \sigma}$ are $\Delta$-concentrated]
  As in the proof that $\aberrance_{v,\sigma}$ is $\Delta$-concentrated, we do not show that $\pairs_{v, \sigma}$ and $\trips_{v, \sigma}$ are $(r, d)$-certifiable with respect to some set of exceptional outcomes.  Instead, we express $\pairs_{v, \sigma}$ and $\trips_{v, \sigma}$ as differences of such random variables and apply Theorem~\ref{exceptional talagrand's} to each of these new random variables.    If $H$ is a graph, recall that $T(H)$ denotes the set of triangles in $H$.  We define the following random variables in which $(\phi, \uncolvtcs)$ is a random naive partial coloring:
  \begin{align*}
    &\totespairs_{v, \sigma} = |\{x,y\in \egal_\sigma(v), c\in L(v) : \phi(x)c\in M_{xv}\text{ and }\phi(y)c\in M_{yv}\}|,\\
    &\totestrips_{v, \sigma} = |\{x,y,z \in \egal_\sigma(v), c\in L(v) : \phi(x)c\in M_{xv}, \phi(y)c\in M_{yv},\text{ and }\phi(z)c\in M_{zv}\}|,\\
    \begin{split}
      &\uncolpairs_{v, \sigma} = |\{x,y\in \egal_\sigma(v), c\in L(v) : \{x,y\}\cap \uncolvtcs\neq\varnothing,\\
      &\qquad \phi(x)c\in M_{xv}\text{ and }\phi(y)c\in M_{yv}\}|,\text{ and}
    \end{split}\\
    \begin{split}
      &\uncoltrips_{v, \sigma} = |\{x,y,z \in \egal_\sigma(v), c\in L(v), c\in L(v) : \{x,y,z\}\cap \uncolvtcs\neq\varnothing, \\
      &\qquad \phi(x)c\in M_{xv}, \phi(y)c\in M_{yv},\text{ and }\phi(z)c\in M_{zv}\}|.
    \end{split}
  \end{align*}

  Note that $\pairs_{v, \sigma} = \totespairs_{v, \sigma} - \uncolpairs_{v, \sigma}$ and $\trips_{v, \sigma} = \totestrips_{v, \sigma} - \uncoltrips_{v, \sigma}$.
  Note also that $\Expect{\pairs_{v, \sigma}} = \Theta(\Expect{\totespairs_{v, \sigma}}) = \Theta(\Expect{\uncolpairs_{v, \sigma}})$ and $\Expect{\trips_{v, \sigma}} = \Theta(\Expect{\totestrips_{v, \sigma}}) = \Theta(\Expect{\uncoltrips_{v, \sigma}}).$

  We claim that $\totespairs_{v, \sigma}$ and $\uncolpairs_{v, \sigma}$ are $(r, \change)$-certifiable with respect to exceptional outcomes $\Omega^*_{v, \sigma}$ from Proposition~\ref{many colors exceptional outcome}, where $r = 6$ and $\change = \log^2 \Delta$.  We only provide a proof for $\uncolpairs_{v, \sigma}$, since the proof for $\totespairs_{v, \sigma}$ is easier.  Let $s > 0$ and let $\omega\notin\Omega^*_{v, \sigma}$ such that $\uncolpairs_{v, \sigma}(\omega) \geq s$.  We show that there is an $(r, \change)$-certificate, $I$, for $\uncolpairs_{v, \sigma}, \omega, s,$ and $\Omega^*_{v, \sigma}$.

  For each $c\in L(v)$, define $S_{c, 1}$ as follows.  If the set of uncolored $\sigma$-egalitarian neighbors $u$ of $v$ such that $\phi_\omega(u)c \in M_{uv}$ has size at least two, then let that set be $S_{c, 1}$.  Otherwise, let $S_{c, 1} = \varnothing$.  For each $c\in L(v)$, each vertex $u\in S_{c, 1}$ either has a neighbor $u' \in A_\omega$ such that $|L(u')| \geq |L(u)|$ and $\phi_\omega(u)\phi_\omega(u') \in M_{uu'}$, is itself not in $A_\omega$, or is uncolored by an $\varepsilon$-equalizing coin-flip.  In the first case, we choose precisely one such neighbor $u'$ of $u$, let $u'$ be in the set $S_{c, 2}$, and let $u$ be in the set $S_{c, u'}$.  In the second case, we let $u \in S'_{c, 1}$, and in the third case, we let $u\in S''_{c, 1}$.  By the definition of these sets,
  \begin{equation}\label{uncolpairs for omega}
    \uncolpairs_{v, \sigma}(\omega) = \sum_{c\in L(v)}\binom{|S_{c, 1}|}{2}
  \end{equation}
  and
  \begin{equation*}
    S_{c, 1} = S'_{c, 1}\cup S''_{c, 1} \cup\left(\bigcup_{u\in S_{c, 2}}S_{c, u}\right).
  \end{equation*}
  Since $\omega\notin\Omega^*_{v, \sigma}$, for each $c\in L(v)$, we have that $|S_{c, 1}| \leq \log\Delta$, and for each $u\in S_{c, 2}$, we have that $|\cup_{c'\in L(v)}S_{c', u}| \leq \log\Delta$.

  For each $c\in L(v)$, we let $I_c$ index the trials determining $\phi_\omega(u)$ for the $u\in S_{c, 1}\cup S_{c, 2}$, for each $u\in S'_{c, 1} \cup S_2$, we let $I_c$ index the trial determining if $u \in A_\omega$, and for each $u \in S''_{c, 1}$, the vertex $u$ is uncolored by an $\varepsilon$-equalizing coin flip, and we also let $I_c$ index this trial.  We let $I = \cup_{c\in L(v)}I_c$.  

  We claim that $I$ is an $(r, \change)$-certificate for $\uncolpairs_{v, \sigma}, \omega, s$, and $\Omega^*_{v, \sigma}$.  To that end, let $\omega' \in \Omega\setminus\Omega^*_{v, \sigma}$ and $k\geq 0$ such that $\omega$ and $\omega'$ differ for at most $k$ trials indexed by $I$.  We say a vertex $u$ \textit{keeps its color} if $\phi_\omega(u) = \phi_{\omega'}(u)$.  For each $c\in L(v)$, let $T'_{c, 1}$ be the set of vertices in $S'_{c, 1}\setminus A_{\omega'}$ that keep their color, let $T''_{c, 1}$ be the set of vertices in $S''_{c, 1}$ that keep their color and are also uncolored by an $\varepsilon$-equalizing coin-flip in the outcome $\omega'$, let $T_{c, 2}$ be the set of vertices in $S_{c, 2} \cap A_{\omega'}$ that keep their color, and for each $u\in T_{c, 2}$ , let $T_{c, u}$ be the set of vertices in $S_{c, u}$ that keep their color.  For each $c\in L(v)$, let $T_{c, 1} = T'_{c, 1}\cup T''_{c, 1}\cup\left(\bigcup_{u\in T_{c, 2}}T_{c, u}\right)$.  Note that
  \begin{equation}\label{uncolpairs for omega'}
    \uncolpairs_{v, \sigma}(\omega') \geq \sum_{c\in L(v)}\binom{\left|T_{c, 1}\right|}{2}.
  \end{equation}
  and
  \begin{equation}\label{uncolpairs difference}
    \sum_{c\in L(v)}\left(\binom{|S_{c, 1}|}{2} - \binom{|T_{c, 1}|}{2}\right) = \sum_{c\in L(v)}|S_{c, 1}\setminus T_{c, 1}|(|S_{c, 1}| + |T_{c, 1}| - 1)/2.
  \end{equation}
  Recall that for each $c\in L(v)$ and $u\in S_{c, 2}$, we have that $|\cup_{c'\in L(v)}S_{c', u}| \leq \log \Delta$.  Since $\omega$ and $\omega'$ differ for at most $k$ trials indexed by $I$, it follows that $\sum_{c\in L(v)}|S_{c, 1}\setminus T_{c, 1}| \leq k\log\Delta$.  Also note that $|S_{c, 1}| + |T_{c, 1}| - 1)/2 \leq \log \Delta$.  Therefore by \eqref{uncolpairs for omega}, \eqref{uncolpairs for omega'}, and \eqref{uncolpairs difference}, $\uncolpairs_{v, \sigma}(\omega') \geq s - k\change$, as required.

  Note that for each $c\in L(v)$, we have that $|I_c| \leq 3|S_{c, 1}|$, and hence $|I_c| \leq 6\binom{|S_{c, 1}|}{2}$.  Therefore $|I| \leq 6s$, as required.  It follows that $I$ is an $(r, \change)$-certificate for $\uncolpairs_{v, \sigma}, \omega, s$, and $\Omega^*_{v, \sigma}$, and hence $\uncolpairs_{v, \sigma}$ is $(r, \change)$-certifiable with respect to $\Omega^*_{v, \sigma}$, as claimed.  Therefore we can apply Theorem~\ref{exceptional talagrand's}.

  We choose $t = \max\{\Expect{\pairs_{v, \sigma}}^{5/6}, \log^{\logexpless}\Delta\}$, so by Proposition~\ref{simplify t} and Theorem~\ref{exceptional talagrand's}, for some constant $\gamma_4 > 0$,
  \begin{equation}
    \label{total pairs concentration}
    \Prob{|\totespairs_{v, \sigma} - \Expect{\totespairs_{v, \sigma}}| > t} \leq 4\exp(-\gamma_4(\log^{16/5}(\Delta))) + 4\Prob{\Omega^*_v},
  \end{equation}
  and
  \begin{equation}
    \label{uncolored pairs concentration}
    \Prob{|\uncolpairs_{v, \sigma} - \Expect{\uncolpairs_{v, \sigma}}| > t} \leq 4\exp(-\gamma_4(\log^{16/5}(\Delta))) + 4\Prob{\Omega^*_v}.
  \end{equation}
  It follows from \eqref{total pairs concentration}, \eqref{uncolored pairs concentration}, and Proposition~\ref{many colors exceptional outcome} that $\pairs_{v, \sigma}$ is $\Delta$-concentrated, as desired.

  Similarly, we can apply Theorem~\ref{exceptional talagrand's} to $\totestrips_{v, \sigma}$ and $\uncoltrips_{v, \sigma}$ with exceptional outcomes $\Omega^*_{v, \sigma}$, $r = 9$, and $\change = \log^3\Delta$.  Letting $t = \max\{\Expect{\trips_{v, \sigma}}^{5/6}, \log^{\logexpless}\Delta\}$, we observe that for some constant $\gamma_5 > 0$,
  \begin{equation}
    \label{total trips concentration}
    \Prob{|\totestrips_{v, \sigma} - \Expect{\totestrips_{v, \sigma}}| > t} \leq 4\exp(-\gamma_5(\log^{6/5}(\Delta))) + 4\Prob{\Omega^*_v},
  \end{equation}
  and
  \begin{equation}
    \label{uncolored trips concentration}
    \Prob{|\uncoltrips_{v, \sigma} - \Expect{\uncoltrips_{v, \sigma}}| > t} \leq 4\exp(-\gamma_5(\log^{6/5}(\Delta))) + 4\Prob{\Omega^*_v}.
  \end{equation}
  It follows from \eqref{total trips concentration}, \eqref{uncolored trips concentration}, and Proposition~\ref{many colors exceptional outcome} that $\trips_{v, \sigma}$ is $\Delta$-concentrated, as desired.
\end{proof}

\section{Critical Graphs and Maximum Average Degree}\label{critical section}

In this section we prove Theorems~\ref{critical thm} and \ref{epsilon mad}.

\subsection{Proof of Theorem~\ref{critical thm}}

We prove Theorem~\ref{critical thm} by finding an appropriate induced subgraph $G'$ of the graph $G$, using the criticality of $G$ to $L$-color $G - V(G')$, and then using Theorem~\ref{main thm} to extend this coloring to an $L$-coloring of $G$, contradicting the criticality of $G$.  In order to extend the $L$-coloring of $G - V(G')$ to one of $G$ using Theorem~\ref{main thm}, the vertices of $G'$ need to have few neighbors in $G - V(G')$.  The following lemma provides the existence of such a subgraph.
\begin{lemma}\label{critical subgraph lemma}
For every $1 \geq \alpha > \varepsilon > 0$, every graph $H$ with $\ad(H) \leq (1 + \varepsilon)\delta(H)$ contains a nonempty induced subgraph $H'\subseteq H$ such that for every $v\in V(H')$
\begin{enumerate}
	\item $d_{H'}(v) \geq \left(\frac{1 - \alpha}{2}\right)\delta(H)$ and
	\item $d_H(v) \leq \left(1 + \frac{1 + \alpha}{\alpha - \varepsilon}\varepsilon\right)\delta(H)$.
\end{enumerate}
\end{lemma}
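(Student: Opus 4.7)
The plan is to prune $H$ in two stages: first remove the vertices that are too high-degree in $H$, then iteratively remove any vertex whose degree in the current induced subgraph is too small. Concretely, write $\delta = \delta(H)$ and $n = |V(H)|$, set $D = \left(1 + \frac{(1+\alpha)\varepsilon}{\alpha - \varepsilon}\right)\delta$ (which simplifies to $\frac{\alpha(1+\varepsilon)}{\alpha - \varepsilon}\delta$), and let $S = \{v : d_H(v) > D\}$, $T = V(H) \setminus S$. Form $H'$ from $H[T]$ by repeatedly deleting any vertex whose current induced degree is less than $\frac{1-\alpha}{2}\delta$, until no such vertex remains. By construction every $v \in V(H')$ satisfies both $d_{H'}(v) \geq \frac{1-\alpha}{2}\delta$ (the process stopped) and $d_H(v) \leq D$ (since $V(H') \subseteq T$), so it suffices to show that $H'$ is nonempty. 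Note also that $T \neq \emptyset$, since any minimum-degree vertex of $H$ lies in $T$.

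Suppose for contradiction that the process deletes every vertex of $T$. Each deleted vertex takes fewer than $\frac{1-\alpha}{2}\delta$ incident edges with it, so summing over deletions gives $2|E(H[T])| < |T|(1-\alpha)\delta$. On the other hand, $2|E(H[T])| = \sum_{v \in T} d_H(v) - e(T,S) \geq |T|\delta - e(T,S)$, and combining these yields $e(T,S) > |T|\alpha\delta$. Since $\sum_{v \in S} d_H(v) = 2|E(H[S])| + e(T,S) \geq e(T,S)$ and $\sum_{v \in S} d_H(v) \leq n(1+\varepsilon)\delta - |T|\delta$ (from $\mathrm{ad}(H) \leq (1+\varepsilon)\delta$), we deduce $|T|(1+\alpha)\delta < n(1+\varepsilon)\delta$, and hence
\begin{equation*}
|T| < \tfrac{1+\varepsilon}{1+\alpha}\, n.
\end{equation*}

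On the other hand, every $v \in S$ satisfies $d_H(v) > D$, so $|S|D < \sum_{v \in S} d_H(v) \leq n(1+\varepsilon)\delta - |T|\delta = |S|\delta + n\varepsilon\delta$, i.e., $|S|(D-\delta) < n\varepsilon\delta$. Substituting $D - \delta = \frac{(1+\alpha)\varepsilon}{\alpha - \varepsilon}\delta$ yields $|S| < \frac{n(\alpha-\varepsilon)}{1+\alpha}$, and therefore
\begin{equation*}
|T| > n - \tfrac{n(\alpha-\varepsilon)}{1+\alpha} = \tfrac{1+\varepsilon}{1+\alpha}\, n,
\end{equation*}
contradicting the previous bound.

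The main obstacle is choosing the two thresholds $D$ and $\frac{1-\alpha}{2}\delta$ so that the resulting upper and lower bounds on $|T|$ meet precisely at $\frac{1+\varepsilon}{1+\alpha}n$; the value $D = \frac{\alpha(1+\varepsilon)}{\alpha-\varepsilon}\delta$ is essentially forced by this requirement. Once the thresholds are calibrated, the argument is a routine double-counting: one inequality controls the edges across the cut $(T, S)$ via the average-degree hypothesis, and the other controls the size of $S$ via the definition of $D$.
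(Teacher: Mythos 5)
Your proof is correct, and it takes a genuinely different route from the paper's. The paper uses a discharging argument: it assigns charge $d(v) - \ad(H)$ to each vertex (so charges sum to zero), lets the high-degree vertices in $X$ send $ch(v)/d(v) > \varepsilon/\alpha$ charge to each neighbor, then iteratively peels off low-degree vertices $X'$ while having each send $\varepsilon/\alpha$ charge outward, and finally argues that $X$ and $X'$ end up with nonnegative (and in one spot, strictly positive) charge, forcing the surviving set $H - (X \cup X')$ to carry negative total charge, hence be nonempty. You instead argue directly by contradiction: assuming the iterative peeling exhausts $T = V(H) \setminus S$, you bound $|E(H[T])|$ from above by $|T|(1-\alpha)\delta/2$ (each deletion removes fewer than $(1-\alpha)\delta/2$ edges), combine this with the minimum-degree bound to get $e(T,S) > |T|\alpha\delta$, and use $\ad(H) \leq (1+\varepsilon)\delta$ to derive $|T| < \frac{1+\varepsilon}{1+\alpha}n$; separately, the defining threshold $D$ on $S$ forces $|S| < \frac{\alpha-\varepsilon}{1+\alpha}n$, giving $|T| > \frac{1+\varepsilon}{1+\alpha}n$, a contradiction. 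Both proofs rest on the same two-stage pruning and the same degree-sum bookkeeping, but your version makes the calibration of the thresholds transparent (they are chosen so the two bounds on $|T|$ meet exactly), whereas the paper's discharging encodes the same computation more implicitly and has to track per-vertex charges through the peeling process. One small expository point: the strict inequality $|S|D < \sum_{v\in S}d_H(v)$ requires $S \neq \varnothing$; this follows from your first bound $|T| < \frac{1+\varepsilon}{1+\alpha}n$ (which forces $|S| > 0$), so the argument is sound, but it would be worth noting the order of deduction explicitly.
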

\begin{proof}
  We use the discharging method.  For each $v\in V(G)$, let the \textit{charge} of $v$ be $ch(v) = d(v) - \ad(H)$.  Note that $\sum_{v\in V(G)}ch(v) = 0$.  Let $X$ denote the set of vertices of $H$ with degree greater than $\left(1 + \frac{1 + \alpha}{\alpha - \varepsilon}\varepsilon\right)\delta(H)$.  Note that $X$ is a proper subset of the vertices of $H$ since $\ad(H) \leq (1 + \varepsilon)\delta(H)$.  We may assume $\delta(H - X) < \left(\frac{1 - \alpha}{2}\right)\delta(H)$ or else $H - X$ is the desired induced subgraph.

  We redistribute the charges in the following way.  Let every $v\in X$ send $ch(v)/d(v)$ charge to each of its neighbors.  Note that for every $v\in X$,
  \begin{equation*}
    \frac{ch(v)}{d(v)} = 1 - \frac{\ad(H)}{d(v)} > 1 - \frac{\ad(H)}{\left(1 + \frac{1 + \alpha}{\alpha - \varepsilon}\varepsilon\right)\delta(H)} \geq \frac{\varepsilon}{\alpha}.
  \end{equation*}
  Therefore every vertex in $X$ has zero charge, and every $v\in V(H - X)$ has charge at least $d_H(v) - \ad(H) + \frac{\varepsilon}{\alpha}(d_H(v) - d_{H - X}(v))$.  If $d_H(v) - d_{H - X}(v) > 0$, then the inequality is strict.
  
  Now we claim we can iteratively remove vertices from $H - X$ of minimum degree to obtain a nonempty graph of minimum degree at least $\left(\frac{1 - \alpha}{2}\right)\delta(H)$.  When we remove a vertex of $H - X$, we add it to a new set $X'$, and we let it send charge $\frac{\varepsilon}{\alpha}$ to every neighbor not in $X\cup X'$.  It suffices to show that every vertex in $X'$ has nonnegative charge and that at least one vertex in $X'$ has positive charge, because then the sum of the charges taken over vertices in $H - (X \cup X')$ is negative, and thus $H' = H - (X \cup X')$ is nonempty.

  Note that if $v\notin X\cup X'$ has degree at most $\left(\frac{1 - \alpha}{2}\right)\delta(H)$ in $H - (X\cup X')$, then $v$ has at least $\left(\frac{1 + \alpha}{2}\right)\delta(H)$ neighbors in $X\cup X'$.  Therefore $v$ receives at least $\frac{\varepsilon}{\alpha}(\frac{1 + \alpha}{2})\delta(H)$ charge and sends at most $\frac{\varepsilon}{\alpha}\left(\frac{1 - \alpha}{2}\right)\delta(H)$ charge.  Hence the difference in charge received and sent is at least $\varepsilon\delta(H)$, and if $v$ has a neighbor in $X$, the inequality is strict.  Therefore $v$ has nonnegative charge, and since at least one vertex of $X'$ has a neighbor in $X$, there is a vertex of $X'$ with positive charge, as desired.
\end{proof}

Now we can prove Theorem~\ref{critical thm}.
\begin{proof}[Proof of \ref{critical thm}]
  Let $\alpha > 0$, and let $\varepsilon \leq \frac{\alpha^2}{1350}$.  Let $G$ be an $L$-critical graph for some $k$-list-assignment $L$ such that $\omega(G)\leq (\frac{1}{2} - \alpha)k$.  Note then that $\alpha < \frac{1}{2}$.  Suppose for a contradiction that $\ad(G) \leq (1 + \varepsilon)k$.  Since $G$ is $L$-critical, $G$ has minimum degree at least $k$.  By Lemma~\ref{critical subgraph lemma}, there exists $G'\subseteq G$ such that for every $v\in V(G')$,
  \begin{enumerate}
  \item $d_{G'}(v) \geq \left(\frac{1 - \alpha}{2}\right)\delta(G)$, and
  \item $d_G(v) \leq \left(1 + \frac{1 + \alpha}{\alpha - \varepsilon}\varepsilon\right)\delta(G)$.
  \end{enumerate}
  Since $G$ is $L$-critical, $G - V(G')$ is $L$-colorable.  Let $\phi$ be an $L$-coloring of $G - V(G')$, and for each $v\in V(G')$, let
  \begin{equation*}
    L'(v) = L(v) \setminus \{c\in L(v) : \exists u\in N(v)\setminus V(G') : \phi(u) = c\}.
  \end{equation*}
  Note that $G'$ is not $L'$-colorable, because we can combine an $L'$-coloring of $G'$ with $\phi$ to obtain an $L$-coloring of $G$.

  Since $d_{G'}(v) \geq \left(\frac{1 - \alpha}{2}\right)\delta(G)$, $\delta(G) \geq k$, and $\omega(v) \leq \omega(G) \leq (\frac{1}{2} - \alpha)k$ for each $v\in V(G')$,
  \begin{equation*}
    \Gap_{G'}(v) \geq \frac{\alpha}{2}k.
  \end{equation*}
  Since each $v\in V(G')$ has at most $d_G(v) - d_{G'}(v)$ neighbors in $V(G)\setminus V(G')$,
  \begin{equation*}
    \Save_{L'}(v) \leq d_G(v) - k \leq \left(\left(1 + \frac{1 + \alpha}{\alpha - \varepsilon}\varepsilon\right)\left(1 + \varepsilon\right) - 1\right)k.
  \end{equation*}
  Since $\varepsilon \leq \frac{\alpha^2}{1350}$ and $\alpha < \frac{1}{2}$,
  \begin{equation*}
    \frac{1 + \alpha}{\alpha - \varepsilon}\varepsilon(1 + \varepsilon) + \varepsilon \leq\frac{\alpha}{1350}\left(\frac{(1 + \alpha)(1 + \alpha^2/1350)}{1 - \alpha/1350} + \alpha\right)\leq \frac{\alpha}{660}.
  \end{equation*}
  Therefore $\Save_{L'}(v) \leq \frac{\alpha}{660}k$.  Now for every vertex $v\in V(G')$, $\Save_{L'}(v) \leq \frac{1}{330}\Gap_{G'}(v)$ and for sufficiently large $k$, $\Gap_{G'}(v) - \Save_{L'}(v) \geq \log^{\logexp}(\Delta(G'))$.  Thus, by Theorem~\ref{main thm}, $G'$ is $L'$-colorable, a contradiction.
\end{proof}

\subsection{Proof of Theorem~\ref{epsilon mad}}

In this subsection we prove Theorem~\ref{epsilon mad}.  It follows fairly easily from Theorem~\ref{critical thm}.
\begin{proof}[Proof of Theorem~\ref{epsilon mad}]
  Given $\alpha > 0$, we let $\varepsilon > 0$ be some constant chosen to be small enough to satisfy certain inequalities throughout the proof.  Let $G$ be a graph such that $\omega(G) \leq (\frac{1}{2} - \alpha)\mad(G)$, and let
  \begin{equation*}
    k  = \left\lceil(1 - \varepsilon)(\mad(G) + 1) + \varepsilon\omega(G)\right\rceil.
  \end{equation*}

  First we prove that there exists an integer $k_0$ such that if $\mad(G) \geq k_0$, then $\chi_\ell(G) \leq k$.  We choose $k_0$ such that $k$ is large enough to apply Theorem~\ref{critical thm}.  Since $k \geq (1 - \varepsilon)\mad(G)$ and $(\frac{1}{2} - \alpha)\mad(G) \geq \omega(G)$, assuming $\varepsilon$ is small enough, $\omega(G) \leq (\frac{1}{2} - \frac{\alpha}{2})k$.

  Let $\varepsilon' > 0$ according to Theorem~\ref{critical thm} for $\alpha/2$.  We may assume $L$ is a $k$-list-assignment for $G$ such that $G$ is not $L$-colorable, or else $\chi_\ell(G) \leq k$, as desired.  Therefore $G$ contains an $L$-critical subgraph $G'$, and by Theorem~\ref{critical thm}, $\ad(G') \geq (1 + \varepsilon')k$.  Hence,
  \begin{equation*}
    (1 + \varepsilon')k \leq \mad(G) \leq \frac{k}{1 - \varepsilon}.
  \end{equation*}
  But we may assume $\varepsilon$ is sufficiently small so that $(1 + \varepsilon') > \frac{1}{1 - \varepsilon}$, a contradiction.  Therefore $\chi_\ell(G) \leq k$ if $\mad(G) \geq k_0$.

  It remains to show that $\chi_\ell(G) \leq k$ if $\mad(G) < k_0$.  If we choose $\varepsilon$ to be less than $\frac{1}{k_0 + 2}$, then
  \begin{equation*}
    k \geq \left\lceil\left(1 - \frac{1}{\mad(G) + 2}\right)\left(\mad(G) + 1\right)\right\rceil = \left\lceil \mad(G) + \frac{1}{\mad(G) + 2}\right\rceil \geq \lfloor \mad(G)\rfloor + 1.
  \end{equation*}
  Therefore we can obtain an $L$-coloring of $G$ for any $k$-list-assignment $L$ by coloring greedily.  Thus, $\chi_\ell(G) \leq k$, as desired. 
\end{proof}
 
\bibliographystyle{plain}
\bibliography{local-reed}

\appendix
\section{Proof of Theorem \ref{exceptional talagrand's}}\label{tala proof section}

In order to prove Theorem \ref{exceptional talagrand's}, we prove the following theorem which yields concentration around the median under the same conditions.
\begin{thm}\label{exceptional talagrand's with median}
  If $X$ is $(r, \change)$-certifiable with respect to $\Omega^*$, then for any $t > 0$,
  \begin{equation*}
    \Prob{|X - \Med(X)| > t} \leq 4\exp\left({-\frac{t^2}{4\change^2r(\Med(X) + t)}}\right) + 4\Prob{\Omega^*}
  \end{equation*}
\end{thm}

We then prove that the expectation and median are close as in the following lemma.
\begin{lemma}\label{expectation close to median}
If $X$ is $(r, \change)$-certifiable with respect to $\Omega^*$ and $M = \sup X$, then
\begin{equation*}
|\Expect{X} - \Med(X)| \leq 48\change\sqrt{r\Expect{X}} +  64r\change^2 + 4M\Prob{\Omega^*}.
\end{equation*}
\end{lemma}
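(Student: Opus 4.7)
The plan is to bound $|\Expect{X} - \Med(X)|$ via the standard tail-integration identity
\[
|\Expect{X} - \Med(X)| \leq \Expect{|X - \Med(X)|} = \int_0^\infty \Prob{|X - \Med(X)| > t}\,dt,
\]
and then feed the tail estimate from Theorem~\ref{exceptional talagrand's with median} into the integrand. The $4\Prob{\Omega^*}$ summand in Theorem~\ref{exceptional talagrand's with median} is a constant in $t$, but since $X$ takes values in $[0, M]$ we have $|X - \Med(X)| \leq M$ almost surely, so in the integrand we may replace $4\Prob{\Omega^*}$ by $4\Prob{\Omega^*}\mathbf{1}_{\{t \leq M\}}$. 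Integrating contributes at most $4M\Prob{\Omega^*}$ to the total, accounting for the last term in the desired bound.

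Next, I would handle the exponential term $4\exp\!\bigl(-t^2/(4\change^2 r(\Med(X) + t))\bigr)$ by splitting the integral at $t = \Med(X)$. For $t \leq \Med(X)$ the denominator inside the exponential is at most $8\change^2 r\Med(X)$, so the integrand is at most $4\exp\!\bigl(-t^2/(8\change^2 r\Med(X))\bigr)$; integrating from $0$ to $\infty$ gives a Gaussian integral bounded by $2\sqrt{2\pi}\cdot\change\sqrt{8 r\Med(X)}$, i.e.\ $O(\change\sqrt{r\Med(X)})$. For $t > \Med(X)$ the denominator is at most $8\change^2 r t$, so the integrand is at most $4\exp(-t/(8\change^2 r))$; integrating yields at most $32 r\change^2$.

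To convert the $\Med(X)$ bound into one involving $\Expect{X}$, I would invoke the elementary fact $\Med(X) \leq 2\Expect{X}$ for non-negative $X$ (Markov's inequality: $\Prob{X \geq 2\Expect{X}} \leq 1/2$). Combining the three contributions gives a bound of the form $c_1\change\sqrt{r\Expect{X}} + c_2 r\change^2 + 4M\Prob{\Omega^*}$ for explicit constants $c_1, c_2$. A direct calculation with the Gaussian integral $\int_0^\infty e^{-u^2}du = \sqrt{\pi}/2$ yields $c_1 = 4\sqrt{\pi}\cdot 2\sqrt{2} \approx 28.4$ and $c_2 = 32$, both comfortably below the claimed values $48$ and $64$.

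The main obstacle is not conceptual but rather bookkeeping: one must carefully track the piecewise nature of the tail bound across $t \leq \Med(X)$ versus $t > \Med(X)$, and verify that the constants absorb cleanly into $48$ and $64$. No further probabilistic tools are needed beyond Theorem~\ref{exceptional talagrand's with median}, Markov's inequality, and routine evaluation of a Gaussian and an exponential integral.
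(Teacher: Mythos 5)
Your proposal is correct, and it takes a genuinely different route from the paper. The paper first shifts to $Y = X + \Expect{X}$ (to guarantee $\Med(Y) \geq \Expect{X} > 0$, since the argument to follow divides by $\sqrt{\Med(Y)}$), then bounds $\Expect{|Y - \Med(Y)|}$ via a \emph{discrete} partition into intervals of width $\change\sqrt{r\Med(Y)}$, applies Theorem~\ref{exceptional talagrand's with median} at each grid point, and sums two geometric-type series $\sum_\ell e^{-\ell^2/8}$ and $\sum_\ell e^{-\ell\sqrt{\Med(Y)}/(8\change\sqrt r)}$. Your proposal replaces all of this with the continuous tail-integration identity $\Expect{|X-\Med(X)|} = \int_0^\infty \Prob{|X - \Med(X)| > t}\,dt$, splits the exponential at $t = \Med(X)$ so that $\Med(X)+t$ is dominated either by $2\Med(X)$ or by $2t$, and evaluates a Gaussian integral and an exponential integral directly. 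The $4\Prob{\Omega^*}$ term you truncate at $t = M$ because $|X - \Med(X)| \leq M$ a.s., recovering the $4M\Prob{\Omega^*}$ contribution exactly as the paper does (where it arises because the sum has only $L+1 \leq M/(\change\sqrt{r\Med(Y)}) + 1$ terms). Your route avoids the shift to $Y$ entirely and handles $\Med(X)=0$ gracefully (the first piece of the integral is empty), and it gives sharper constants: the two exponential pieces contribute roughly $4\sqrt{4\pi}\,\change\sqrt{r\Expect{X}} \approx 14.2\,\change\sqrt{r\Expect{X}}$ and $32r\change^2$, both comfortably below the stated $48$ and $64$. (Your own arithmetic for $c_1$ — you quote $4\sqrt\pi\cdot 2\sqrt 2 \approx 28.4$ when that product is actually about $20$, and the true value is closer to $14.2$ — is slightly off, but all candidate values are well under $48$, so this is harmless.) The paper's discrete approach is what produces the looser constants, and it is likely a vestige of trying to repair Molloy–Reed's Fact 20.1, which used a similar discretization; your continuous argument is cleaner.
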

\begin{proof}
Let $Y = X + \Expect{X}$.  Note that $\Expect{Y} - \Med(Y) = \Expect{X} - \Med(X)$, $\Med(Y) \geq \Expect{X} > 0$, and $\Expect{Y} \leq 2\Expect{X}$.
Note also that
\begin{equation*}
|\Expect{Y} - \Med(Y)| \leq \Expect{|Y - \Med(Y)|}.
\end{equation*}

Let $L = \lfloor M/(\change\sqrt{r\Med(Y)})\rfloor$, and note that $|Y - \Med(Y)| \leq  (L + 1)\change\sqrt{r\Med(Y)}$.  By partitioning the possible values of $|Y - \Med(Y)|$ into intervals of length $\change\sqrt{r\Med(Y)}$, we get

\begin{align*}
\Expect{|Y - \Med(Y)|} &\leq \begin{aligned}[t]
	\sum_{\ell=0}^L \change\sqrt{r\Med(Y)}(\ell + 1) &\left(\Prob{|Y - \Med(Y)| \geq \ell \change\sqrt{r\Med(Y)}}\right.\\
	&\left. - \Prob{|Y - \Med(Y)| \geq (\ell + 1) \change\sqrt{r\Med(Y)}}\right).\end{aligned}\\
&= \sum_{\ell=0}^L \change\sqrt{r\Med(Y)}\left(\Prob{|Y - \Med(Y)| \geq \ell \change\sqrt{r\Med(Y)}}\right).
\end{align*}
By applying Theorem \ref{exceptional talagrand's with median} with $t=\ell \change\sqrt{r\Med(Y)}$ to every summand,
\begin{equation*}
  \Expect{|Y - \Med(Y)|} \leq 4\change\sqrt{r\Med(Y)}\sum_{\ell=0}^L \left(\exp\left({-\frac{\ell^2\change^2r\Med(Y)}{4\change^2r(\Med(Y) + \ell \change\sqrt{r\Med(Y)})}}\right) + \Prob{\Omega^*}\right).
\end{equation*}
Note that for each $\ell\in\{0, \dots, L\}$,
\begin{multline*}
  \exp\left({\frac{\ell^2\change^2r\Med(Y)}{4\change^2r(\Med(Y) + \ell \change\sqrt{r\Med(Y)})}}\right) \leq \exp\left({\frac{\ell^2\change^2r\Med(Y)}{8\change^2r\max\{\Med(Y), \ell \change\sqrt{r\Med(Y)}\}}}\right)\\
  \leq \exp\left({\frac{\ell^2\change^2r\Med(Y)}{8\change^2r\Med(Y)}}\right) + \exp\left({\frac{\ell^2\change^2r\Med(Y)}{8\change^3r\ell\sqrt{r\Med(Y)}\}}}\right) = \exp\left(\ell^2/8\right) + \exp\left(\frac{\ell\sqrt{\Med(Y)}}{8\change\sqrt{r}}\right).
\end{multline*}
Note also that
\begin{equation*}
  4\change\sqrt{r\Med(Y)}\sum_{\ell=0}^L\Prob{\Omega^*} \leq 4M\Prob{\Omega^*}.
\end{equation*}
Therefore
\begin{equation*}
  \Expect{|Y - \Med(Y)|} \leq 4\change\sqrt{r\Med(Y)}\sum_{\ell=0}^\infty \left(\exp\left({-\ell^2/8}\right) + \exp\left({-\frac{\ell\sqrt{\Med(Y)}}{8\change\sqrt{r}}}\right)\right) + 4M\Prob{\Omega^*}.
\end{equation*}
Note that $\sum_{\ell=0}^\infty e^{-\ell x} = \frac{1}{1 - e^{-x}}$.  Note also that $\frac{x}{2} \leq 1 - e^{-x}$ if $x < \frac{3}{2}$.   Since $\frac{1}{1 - e^{-x}} < 2$ when $x\geq \frac{3}{2}$, $\frac{1}{1 - e^{-x}} \leq \max\{2, \frac{2}{x}\}$.  Therefore
\begin{equation*}
  \sum_{\ell=0}^\infty \exp\left(-\frac{\ell\sqrt{\Med(Y)}}{8\change\sqrt{r}}\right) \leq \max\left\{2, \frac{16\change\sqrt{r}}{\sqrt{\Med(Y)}}\right\}.
\end{equation*}

Note that $\sum_{\ell=0}^\infty e^{-\ell^2/ 8} < 4$.  Therefore
\begin{equation*}
  \Expect{|Y - \Med(Y)|} \leq 4\change\sqrt{r\Med(Y)}\left(4 + \max\left\{2, \frac{16\change\sqrt{r}}{\sqrt{\Med(Y)}}\right\}\right) + 4M\Prob{\Omega^*}.  
\end{equation*}
Since the maximum of two numbers is at most their sum,
\begin{equation*}
  \Expect{|Y - \Med(Y)|} \leq 24\change\sqrt{r\Med(Y)} + 64r\change^2 + 4M\Prob{\Omega^*}.
\end{equation*}
Since $\Med(Y) \leq 2\Expect{Y} \leq 4\Expect{X}$,
\begin{equation*}
\Expect{Y - \Med(Y)|} \leq 48\change\sqrt{r\Expect{X}} +  64r\change^2 + 4M\Prob{\Omega^*},
\end{equation*}
as desired.
\end{proof}

Lemma~\ref{expectation close to median} is similar to Fact 20.1 in~\cite{MR00}.  However, the proof of Fact 20.1 is flawed, as we now describe.  Molloy and Reed upper bound $\Prob{|X - \Med(X)| > ic\sqrt{r\Med(X)}}$ by $4e^{-i^2/8}$ for every positive integer $i$ using Talagrand's Inequality I; however, Talagrand's Inequality I only applies if $0 \leq ic\sqrt{r\Med(X)} \leq \Med(X)$.  Our proof of Lemma~\ref{expectation close to median} avoids this flaw, since Theorem~\ref{exceptional talagrand's with median} has no restriction on $t$.  However, for these large values of $i$, we bound this probability by $\exp\left(-\frac{\ell\sqrt{\Med(Y)}}{8\change\sqrt{r}}\right)$ instead, which leads to the additional $64r\change^2$ term.

Now we can prove Theorem \ref{exceptional talagrand's} assuming Theorem \ref{exceptional talagrand's with median}.
\begin{proof}[Proof of Theorem \ref{exceptional talagrand's}]
Since $t > 96\change\sqrt{r\Expect{X}} +  128r\change^2 + 8M\Prob{\Omega^*},$
\begin{equation}\label{t over 2 bound}
\frac{t}{2} > 48\change\sqrt{r\Expect{X}} +  64r\change^2 + 4M\Prob{\Omega^*}.
\end{equation}
By applying Lemma \ref{expectation close to median} and then \eqref{t over 2 bound},
\begin{equation*}
\Prob{|X - \Expect{X}| > t} \leq \Prob{|X - \Med(X)| > \frac{t}{2}}.
\end{equation*}
Since $\Med(X) \leq 2\Expect{X}$, Theorem \ref{exceptional talagrand's with median} implies that
\begin{align*}
\Prob{|X - \Med(X)| > \frac{t}{2}} &\leq 4\exp\left({-\frac{(t/2)^2}{4\change^2r(2\Expect{X} + (t/2))}}\right) + 4\Prob{\Omega^*},\\
 &=4\exp\left({-\frac{t^2}{8\change^2r(4\Expect{X} + t)}}\right) + 4\Prob{\Omega^*}
\end{align*}
as desired.
\end{proof}

It remains to prove Theorem \ref{exceptional talagrand's with median}.

Let $((\Omega_i, \Sigma_i, \mathbb P_i))_{i=1}^n$ be probability spaces and $(\Omega, \Sigma, \mathbb P)$ their product space.  For a set $A\subseteq \Omega$ and event $\omega\in\Omega$, let
\begin{equation}
d(\omega, A) = \sup_{||\alpha||=1}\left\{\tau : \sum_{i:\omega_i\neq\omega'_i}\alpha_i\geq\tau \rm{\ for\ all\ }\omega'\in A\right\}.
\end{equation}

We use the original version of Talagrand's Inequality.
\begin{thm}[Talagrand's Inequality \cite{T95}]\label{OG Tala}
If $A,B\subseteq \Omega$ are measurable sets such that for all $\omega\in B$, $d(\omega, A) \geq \tau$, then
$$\Prob{A}\Prob{B}\leq e^{\frac{-\tau^2}{4}}.$$
\end{thm}

We can now prove Theorem~\ref{exceptional talagrand's with median}.
\begin{proof}[Proof of Theorem \ref{exceptional talagrand's with median}]
It suffices to show that
\begin{equation}\label{one sided estimation}
\Prob{X\leq \Med(X) - t} \leq 2\exp\left({-\frac{t^2}{8r\change^2(\Med(X) + t)}}\right) + 2\Prob{\Omega^*}
\end{equation}
and
\begin{equation}\label{other one sided estimation}
\Prob{X\geq \Med(X) + t} \leq 2\exp\left({-\frac{t^2}{8r\change^2(\Med(X) + t)}}\right) + 2\Prob{\Omega^*}.
\end{equation}
Let 
\begin{align*}
&A = \{\omega\in \Omega\backslash \Omega^* : X(\omega) \geq \Med(X) + t\}, \text{ and}\\
&B = \{\omega\in \Omega\backslash \Omega^* : X(\omega) \leq \Med(X) \}.
\end{align*}

We need to show the following.
\begin{claim}\label{A and B satisfy OG Tala}
For all $\omega\in B$, $d(\omega, A) \geq \frac{t}{c\sqrt{r(\Med(X) + t)}}$.
\end{claim}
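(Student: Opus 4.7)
The plan is, for a fixed $\omega \in B$, to exhibit a non-negative unit vector $\alpha \in \mathbb{R}^n$ such that $\sum_{i:\omega_i\neq\omega'_i}\alpha_i \geq \tau := t/(\change\sqrt{r(\Med(X)+t)})$ for every $\omega' \in A$; by the minimax identity $d(\omega,A) = \inf_{\mu}\|p^{(\mu)}\|_2$, where the infimum is over probability distributions $\mu$ supported on $A$ and $p^{(\mu)}_i := \mathbb E_\mu[\mathbf 1[\omega_i\neq \omega'_i]]$, this is equivalent to showing $\|p^{(\mu)}\|_2 \geq \tau$ for every such $\mu$. The argument combines two ingredients: $(r,\change)$-certifiability of $X$ applied to each $\omega'\in A$, and the fact that $\omega$ itself lies in $\Omega\setminus\Omega^*$.

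First I would extract a certificate for each $\omega'\in A$: since $\omega'\in\Omega\setminus\Omega^*$ and $X(\omega')\geq \Med(X)+t$, applying $(r,\change)$-certifiability with $s = \Med(X)+t$ yields an index set $I_{\omega'}$ with $|I_{\omega'}|\leq R:=r(\Med(X)+t)$, such that any $\omega''\in\Omega\setminus\Omega^*$ differing from $\omega'$ on at most $k$ coordinates of $I_{\omega'}$ satisfies $X(\omega'')\geq \Med(X)+t-k\change$. Next, since $\omega\in B\subseteq\Omega\setminus\Omega^*$ and $X(\omega)\leq \Med(X)$, taking $\omega''=\omega$ and $k := |\{i\in I_{\omega'}: \omega_i\neq \omega'_i\}|$ forces $\Med(X)\geq \Med(X)+t-k\change$, so $k\geq t/\change=:h$. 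Thus each $I_{\omega'}$ contains at least $h$ coordinates where $\omega$ and $\omega'$ disagree.

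The final step is the standard Talagrand certificate-to-distance reduction: if for every $\omega'\in A$ there is a set $J_{\omega'}$ with $|J_{\omega'}|\leq R$ containing at least $h$ disagreement coordinates with $\omega$, then $d(\omega,A)\geq h/\sqrt{R}$, which with our parameters is exactly $\tau$. To justify this I would consider, for each candidate $\mu$, the unit vectors $\alpha^{(\omega')}_i := \mathbf 1[i\in I_{\omega'}]/\sqrt{|I_{\omega'}|}$, which satisfy $\langle\alpha^{(\omega')}, s(\omega,\omega')\rangle \geq h/\sqrt{R}$ pointwise; a Cauchy--Schwarz computation on $\sum_i \mathbb E_\mu[\mathbf 1[i\in I_{\omega'},\omega_i\neq \omega'_i]]$, combined with the first-moment bound $\sum_i \mathbb E_\mu[\mathbf 1[i\in I_{\omega'}]] = \mathbb E_\mu[|I_{\omega'}|]\leq R$, then yields the required $\|p^{(\mu)}\|_2\geq h/\sqrt{R}$ uniformly in $\mu$.

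The main obstacle is this last reduction: passing from the $\omega'$-dependent unit vectors $\alpha^{(\omega')}$ to a single $\omega'$-independent witness is the nontrivial step, and it is where the specific dependence $h/\sqrt{R}$ arises. The remainder is essentially bookkeeping with the certifiability definition, and the role of $\Omega^*$ in the claim is precisely that $\omega\notin\Omega^*$ is what licenses the application of the certificate to $\omega''=\omega$; the mass on $\Omega^*$ is absorbed by the additive $\Prob{\Omega^*}$ terms appearing in Theorems~\ref{exceptional talagrand's} and~\ref{exceptional talagrand's with median}.
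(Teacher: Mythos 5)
Your proposal mirrors the paper's own argument step for step up to the point where the per-$\omega'$ witness vectors are constructed: for each $\omega'\in A$ you extract the $(r,\change)$-certificate $I_{\omega'}$ for $X,\omega',\Med(X)+t,\Omega^*$, apply it to $\omega''=\omega$ (valid since $\omega\in B\subseteq\Omega\setminus\Omega^*$) to deduce at least $h:=t/\change$ disagreements within $I_{\omega'}$, and set $\alpha^{(\omega')}:=\mathbf 1_{I_{\omega'}}/\sqrt{|I_{\omega'}|}$, which gives $\langle\alpha^{(\omega')},s(\omega,\omega')\rangle\geq h/\sqrt{r(\Med(X)+t)}$. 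This is precisely what the paper's proof does. You also correctly flag what the paper's terse ``Hence, the claim follows'' passes over: $d(\omega,A)$ is a $\sup_\alpha\inf_{\omega'}$, so the conclusion needs a single $\alpha$ that works uniformly over all $\omega'\in A$, whereas the certificate construction produces an $\omega'$-dependent one; constructing $\alpha^{(\omega')}$ per $\omega'$ only bounds the weaker $\inf_{\omega'}\sup_\alpha$ quantity.

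The fix you propose does not, however, close this gap. The minimax identity $d(\omega,A)=\inf_\mu\|p^{(\mu)}\|_2$ is a correct reformulation, but the Cauchy--Schwarz computation you sketch yields a lower bound on $\|p^{(\mu)}\|_1$, not on $\|p^{(\mu)}\|_2$. Writing $q_i:=\mathbb E_\mu[\mathbf 1[i\in I_{\omega'}]]$ (so $\sum_i q_i\leq R:=r(\Med(X)+t)$), the bound $\mathbb E_\mu[|I_{\omega'}\cap\{i:\omega_i\neq\omega'_i\}|]\geq h$ gives
\[h\leq\sum_i\mathbb E_\mu\bigl[\mathbf 1[i\in I_{\omega'}]\mathbf 1[\omega_i\neq\omega'_i]\bigr]\leq\sum_i\sqrt{q_ip_i}\leq\Bigl(\sum_iq_i\Bigr)^{1/2}\Bigl(\sum_ip_i\Bigr)^{1/2}\leq\sqrt{R\,\|p^{(\mu)}\|_1},\]
that is $\|p^{(\mu)}\|_1\geq h^2/R$. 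Since each $p_i\in[0,1]$, one has $\|p\|_2^2\leq\|p\|_1$, so this is strictly weaker than the desired $\|p^{(\mu)}\|_2\geq h/\sqrt R$ and gives nothing in the wanted direction. In fact the abstract lemma you invoke as a black box---that if every $\omega'\in A$ has a set of size at most $R$ containing at least $h$ disagreements with $\omega$ then $d(\omega,A)\geq h/\sqrt R$---is false at that level of generality: take $\omega=0\in\{0,1\}^n$, $A=\{e_1,\dots,e_n\}$, $I_{e_i}=\{i\}$, so $h=R=1$, yet $\inf_\mu\|p^{(\mu)}\|_2=1/\sqrt n$ (uniform $\mu$). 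So the reduction you correctly identify as the nontrivial step really is nontrivial, must use more than the raw counting data you feed into Cauchy--Schwarz, and remains unproved in your writeup.
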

To that end, let $\omega'\in A$.  Since $X$ is $(r, \change)$-certifiable, there exists an $(r, \change)$-certificate, $I$, for $X, \omega', \Med(X) + t,$ and $\Omega^*$.  Thus, the outcomes $\omega$ and $\omega'$ differ in at least $t/\change$ coordinates of $I$.  Therefore if we set $\alpha = 1 / \sqrt{|I|}\cdot\mathbf 1_I$ where $\mathbf 1_I$ is the characteristic vector of $I$, then $\omega$ and $\omega'$ have $\alpha$-hamming distance at least $t / (\change\sqrt{r(\Med(X) + t)})$.  Hence, the claim follows.

Now \eqref{other one sided estimation} follows from Claim \ref{A and B satisfy OG Tala} and Theorem \ref{OG Tala}.  The proof of \eqref{one sided estimation} is similar, so we omit it.
\end{proof}

The proof of Claim~\ref{A and B satisfy OG Tala} demonstrates why we introduce $k$ into the definition of $(r, \change)$-certificates, rather than considering changing the outcome of only one trial.  We may change the outcome of one trial and obtain an exceptional outcome, in which case we need that changing the outcome of yet another trial does not greatly affect $X$, or else the outcomes $\omega$ and $\omega'$ may differ for only two trials.

\end{document}